\documentclass[11pt]{amsart}
\usepackage[margin=1in,top=1in,centering]{geometry}
\usepackage[utf8]{inputenc}
\usepackage[english]{babel}
\usepackage{amsmath,amssymb,amsthm}
\usepackage{tikz}
\usepackage{comment}
\usepackage{mathtools}
\usepackage{adjustbox}
\usepackage{graphicx}
\usetikzlibrary{cd}

\usepackage[
colorlinks=true,
urlcolor=purple,
linkcolor=purple!87!black,
pdfborder={0 0 0}
]{hyperref}

\newcommand{\Z}{\mathbb{Z}}

\newcommand{\N}{\mathbb{N}}

\newcommand{\D}{\mathcal{D}}
\newcommand{\F}{\mathcal{F}}

\newcommand{\add}{\mathrm{add}\,}
\newcommand{\Ext}{\mathrm{Ext}}
\newcommand{\Hom}{\mathrm{Hom}}
\newcommand{\dell}{\mathrm{dell}\,}
\newcommand{\kdell}{k\text{-}\mathrm{dell}\,}
\newcommand{\ddell}{\mathrm{ddell}\,}
\newcommand{\edell}{\mathrm{edell}\,}
\newcommand{\kedell}{k\text{-}\mathrm{edell}\,}
\newcommand{\kddell}{k\text{-}\mathrm{ddell}\,}
\newcommand{\subddell}{\mathrm{sub}\text{-}\mathrm{ddell}\,}
\newcommand{\ksubddell}{k\text{-}\mathrm{sub}\text{-}\mathrm{ddell}\,}
\newcommand{\dellT}{\mathrm{dell}_T}
\newcommand{\phidim}{\phi\dim}
\newcommand{\phidimT}{\phi_T\dim}
\newcommand{\findim}{\mathrm{findim}\,}
\newcommand{\Findim}{\mathrm{Findim}\,}

\newcommand{\op}{\mathrm{op}}
\newcommand{\pd}{\mathrm{pd}\,}
\newcommand{\id}{\mathrm{id}\,}

\newcommand{\dirsum}{\xhookrightarrow{\!\oplus}}

\newcommand{\soc}{\mathrm{soc}\,}
\newcommand{\smod}{\underline{\mathrm{mod}}\,}
\renewcommand{\mod}{\mathrm{mod}\,}
\newcommand{\Mod}{\mathrm{Mod}\,}
\newcommand{\coker}{\mathrm{coker}\,}
\newcommand{\im}{\mathrm{im}\,}
\newcommand{\inc}{\xhookrightarrow{}}
\newcommand{\onto}{\twoheadrightarrow}

\theoremstyle{plain}
\newtheorem{theorem}{Theorem}[section]
\newtheorem{definition}[theorem]{Definition}
\newtheorem{lemma}[theorem]{Lemma}
\newtheorem{proposition}[theorem]{Proposition}
\newtheorem{corollary}[theorem]{Corollary}
\newtheorem{remark}[theorem]{Remark}
\newtheorem{example}[theorem]{Example}
\newtheorem{question}[theorem]{Question}

\title{Derived Delooping Levels and Finitistic Dimension}

\author{Ruoyu Guo and Kiyoshi Igusa}

\keywords{finitistic dimension conjecture, torsion pair, derived delooping level, phi-dimension, syzygies}
\subjclass[2020]{16G20, 16E05}  	


\begin{document}

\begin{abstract}
In this paper, we develop new ideas regarding the finitistic dimension conjecture, or the findim conjecture for short. Specifically, we improve upon the delooping level by introducing three new invariants called the effective delooping level $\edell\!$, the sub-derived delooping level $\subddell\!$, and the derived delooping level $\ddell\!$. They are all better upper bounds for the opposite Findim. Precisely, we prove
\[
\Findim\Lambda^{\op} = \edell\Lambda \leq \ddell\Lambda \text{ (or $\subddell\Lambda$)} \leq \dell\Lambda
\]
and provide examples where the last inequality is strict (including the recent example from \cite{kershaw2023} where $\dell\Lambda=\infty$, but $\ddell\Lambda=1=\Findim\Lambda^{\op}$).

We further enhance the connection between the findim conjecture and tilting theory by showing finitely generated modules with finite derived delooping level form a torsion-free class $\mathcal{F}$. Therefore, studying the corresponding torsion pair $(\mathcal{T}, \mathcal{F})$ will shed more light on the little finitistic dimension. Lastly, we relate the delooping level to the $\phi$-dimension $\phidim$, a popular upper bound for findim, and give another sufficient condition for the findim conjecture.
\end{abstract}

\maketitle

\tableofcontents

\section{Introduction and Main Results}

The finitistic dimension conjectures have been an active area of research for decades since they were first proposed by Rosenberg and Zelinsky in \cite{bass1960}. The original conjectures stated that, for any ring $\Lambda$,
\begin{enumerate}
\item $\Findim\Lambda = \findim\Lambda$, and
\item $\findim\Lambda<\infty$,
\end{enumerate}
where
$$\Findim\Lambda = \sup \{\pd M \mid M\in\Mod\Lambda, \, \pd M<\infty \},$$
$$\findim\Lambda = \sup \{\pd M \mid M\in\mod\Lambda, \, \pd M<\infty \},$$
$$\pd M = \inf \{n\in \N \mid \Omega^n M \text{ is projective} \}.$$

The first part of the conjectures is shown to fail in many situations, including even monomial algebras \cite{huisgen1992homological}. {\color{black} On the other hand, there is no known counterexample for the second part of the conjecture for finite dimensional $\Lambda$. In the infinite-dimensional case, however, if $\Lambda$ is a commutative Noetherian ring that is regular of infinite Krull dimension, then there exists an algebra $\tilde{\Lambda}$ related to $\Lambda$ such that $\findim\tilde{\Lambda}=\infty$ and $\findim\tilde{\Lambda}^{\op}=0$ \cite{krause2022symmetry}.} 

We will call the second part of the finitistic dimension conjectures restricted to finite dimensional algebras the \textbf{findim conjecture} for short. The finiteness of findim bears much homological significance. Some consequences of the findim conjecture are
\begin{enumerate}
\item \textbf{Wakamatsu tilting conjecture} \cite{mantese2004wakamatsu}

\begin{enumerate}
\item \textbf{Gorenstein symmetry conjecture}, a consequence of the Wakamatsu tilting conjecture
\end{enumerate}

\item \textbf{Strong Nakayama conjecture}, which is equivalent to the Nunke condition, by \cite{happel1993reduction}

\begin{enumerate}
\item \textbf{Generalized Nakayama conjecture} and \textbf{Auslander-Reiten conjecture} are equivalent \cite{yamagata1996frobenius} and are both implied by the strong Nakayama conjecture
\item \textbf{Nakayama conjecture}, implied by the generalized Nakayama conjecture \cite{auslander1975generalized}
\end{enumerate}
\end{enumerate}

Many methods were developed in an attempt to solve the findim conjecture, a lot of which involve the analysis of syzygies. This turns out to be the center of our attention in this paper as well. Another method to solve the findim conjecture involves injective generation \cite{rickard2019unbounded}. For a survey of some existing results and more implications among these conjectures, see \cite{masters_thesis, happel1991homological, huisgen1995}.

While the findim conjecture holds for many important algebras, such as monomial algebras \cite{GKK1991}, radical cube zero algebras \cite{green1991, igusa2005}, representation dimension three algebras \cite{igusa2005}, and special biserial algebras (since its representation dimension is 3 shown in \cite{erdmann2004radical}) and so on, it remains a very difficult question in general. New ideas arose when G\'elinas introduced the delooping level $\dell\Lambda$ of an algebra $\Lambda$ and showed $\Findim\Lambda^{\op}\leq \dell\Lambda$ \cite{gelinas2022}. Therefore, one way to solve the findim conjecture amounts to showing the delooping level is always finite. So naturally, we have a few questions.
\begin{enumerate}
\item Is the delooping level finite for all $\Lambda$?
\item Is the difference $\dell\Lambda-\Findim\Lambda^{\op}$ or $\dell\Lambda-\findim\Lambda^{\op}$ finite, zero, or arbitrarily large? 
\end{enumerate}

For Question 1, the answer is negative due to a recent counterexample by Kershaw and Rickard \cite{kershaw2023}, but we will show their example has finite derived delooping level, our improved invariant, in Section \ref{sec:trivial_extensions}. For Question 2, it is known that $\Findim\Lambda-\findim\Lambda$ can be arbitrarily large \cite{smalo1998}, so the same is true for $\dell\Lambda-\findim\Lambda^{\op}$. On the other hand, G\'elinas asked whether $\dell\Lambda=\Findim\Lambda^{\op}$ is always true in Question 4.2 of \cite{gelinas2022}, and we show that this is false with a motivating counterexample (Example \ref{ex:monomial_example}) of a monomial algebra in Section \ref{sec:main_theorems}, where $\dell\Lambda = \Findim\Lambda^{\op}+1$. The difference $\dell\Lambda-\Findim\Lambda^{\op}$ can indeed be arbitrarily large due to the same example in \cite{kershaw2023}.

In order to close the gap between $\dell\Lambda$ and $\Findim\Lambda^{\op}$, we introduce three new invariants called the \textbf{effective delooping level} $\edell\Lambda$, the \textbf{sub-derived delooping level} $\subddell\Lambda$, and the \textbf{derived delooping level} $\ddell\Lambda$ as improvements to the original delooping level. The improvement lies in the fact that computing $\ddell$ and $\subddell$ involves considering all modules; on the other hand, the delooping level only considers syzygies of simple modules $S$ and modules of the form $\Omega^{n+1}\raisebox{\depth}{\scalebox{1}[-1]{\(\Omega\)}}^{n+1}\Omega^n S$ \cite[Corollary 1.12]{gelinas2022}, where $(\raisebox{\depth}{\scalebox{1}[-1]{\(\Omega\)}}, \Omega)$ is an adjoint pair on $\smod\Lambda$. Our first theorem is as follows.

\begin{theorem}
\label{thm:theorem1}
For any finite dimensional algebra $\Lambda$ over a field $\mathbb{K}$,
\begin{equation}
\label{eq:thm1}
\Findim\Lambda^{\op} = \edell\Lambda \leq \ddell\Lambda \text{ (or $\subddell\Lambda$)} \leq \dell\Lambda.
\end{equation}
\end{theorem}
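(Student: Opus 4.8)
The plan is to establish the central equality $\Findim\Lambda^{\op}=\edell\Lambda$ first, and then to deduce the two upper bounds $\edell\Lambda\le\ddell\Lambda\le\dell\Lambda$ and $\edell\Lambda\le\subddell\Lambda\le\dell\Lambda$ by comparing, one implication at a time, the conditions on the syzygies $\Omega^{n}S$ of the simple modules $S$ that define the four invariants. Throughout, the workhorse is the identity $\Ext^{i}_{\Lambda}(-,M)=\Hom_{D^{b}(\Lambda)}(-,M[i])$ and the fact that $\id_{\Lambda}M\le n$ is detected on simples.

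For the equality, the forward inequality $\Findim\Lambda^{\op}\le\edell\Lambda$ should be a sharpened form of G\'elinas's argument for $\Findim\Lambda^{\op}\le\dell\Lambda$. Its skeleton: $\id_{\Lambda}M\le n$ is equivalent to $\Ext^{n+1}_{\Lambda}(S,M)=\underline{\Hom}_{\Lambda}(\Omega^{n+1}S,M)=0$ for every simple $S$, so if $\Omega^{n}S\dirsum\Omega^{n+1}N$ in $\smod\Lambda$, then applying $\Omega$ and $\underline{\Hom}_{\Lambda}(-,M)$ exhibits $\Ext^{n+1}_{\Lambda}(S,M)$ as a direct summand of $\Ext^{n+2}_{\Lambda}(N,M)$; after first upgrading the delooping hypothesis from simple modules to arbitrary modules by the reduction to simples from \cite{gelinas2022}, one iterates this to realize $\Ext^{n+1}_{\Lambda}(S,M)$ as a summand of $\Ext^{n+1+k}_{\Lambda}(N_{k},M)$ for every $k$, and finiteness of $\id_{\Lambda}M$ then kills it for $k\gg 0$. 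After identifying $\Findim\Lambda^{\op}$ with the finitistic injective dimension of $\Lambda$ --- with the usual care for infinitely generated test modules, as in \cite{gelinas2022} --- this yields $\Findim\Lambda^{\op}\le n$. The only part of the delooping datum this argument consumes is a splitting visible to the functors $\underline{\Hom}_{\Lambda}(-,M)$ with $M$ of finite injective dimension, and the \emph{effective} delooping level is exactly the invariant that records only this weaker datum; hence the argument becomes reversible, and from a test module realizing $\Findim\Lambda^{\op}=d$ one reads off --- essentially by the definition of $\edell$ --- an effective delooping of $\Omega^{d}S$ at level $d$ for every simple $S$, giving $\edell\Lambda\le\Findim\Lambda^{\op}$.

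For the inequality chains I would argue purely by comparing conditions, with no further homological input. A module-level delooping $\Omega^{n}S\dirsum\Omega^{n+1}N$ with $N\in\mod\Lambda$ is in particular a delooping inside $D^{b}(\Lambda)$, and a fortiori it witnesses the weaker sub-derived condition; hence the derived and the sub-derived conditions at level $n$ are implied by the module condition at level $n$, which gives $\ddell\Lambda\le\dell\Lambda$ and $\subddell\Lambda\le\dell\Lambda$. Conversely, the telescoping argument above only needs a splitting in $D^{b}(\Lambda)$, since $\Ext^{i}_{\Lambda}(-,M)=\Hom_{D^{b}(\Lambda)}(-,M[i])$ does not distinguish a module from its image in the derived category; so a derived (or sub-derived) delooping at level $n$ already produces an effective delooping at level $n$, which gives $\edell\Lambda\le\ddell\Lambda$ and $\edell\Lambda\le\subddell\Lambda$. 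In short, the four invariants correspond to four conditions on $\Omega^{n}S$ of decreasing strength --- module-delooping, derived (resp.\ sub-derived) delooping, effective delooping --- and the chain \eqref{eq:thm1} is simply the order reversal of this implication chain.

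The main obstacle is the equality $\Findim\Lambda^{\op}=\edell\Lambda$, and inside it the reverse inequality $\edell\Lambda\le\Findim\Lambda^{\op}$: one must fix the definition of "effective delooping" so that it is simultaneously (i) weak enough to be forced by a finite bound on the injective dimensions of test modules, (ii) strong enough that the reduction from simple modules to all modules still goes through and still outputs a single uniform level, and (iii) still implied by the module-level condition so that $\edell\Lambda\le\dell\Lambda$ is automatic. The secondary difficulty is making sure no genuinely derived-categorical delooping can undercut the effective bound --- i.e.\ that the passage from $D^{b}(\Lambda)$ back to $\mod\Lambda$ in the telescoping step really loses nothing --- which I expect to follow from the identification of the obstruction with an $\Ext$ group that is insensitive to that passage. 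A minor but non-trivial bookkeeping point, again inherited from \cite{gelinas2022}, is the passage between the little and the big finitistic dimension, since the delooping-type invariants are defined by finitely generated data while $\Findim$ ranges over all modules.
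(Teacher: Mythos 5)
Your outline of the equality $\Findim\Lambda^{\op}=\edell\Lambda$ is essentially the paper's: the forward inequality is the observation (via Lemma \ref{lem:ext_simple}) that if some module had $\id X>n=\edell\Lambda$, then a simple $S$ would have $\Ext^{\id X}_\Lambda(S,X)\neq 0$, contradicting $\edell S\le n$; the reverse inequality is vacuity (if $\Findim\Lambda^{\op}=d$ there is no $X$ with $\id X>d$, so the defining condition of $\edell S\le d$ is empty). Note that for $\edell$ you do not need the telescoping/iteration step you describe from G\'elinas's $\dell$-argument --- the definition of $\edell$ records the final $\Ext$-vanishing directly --- but that is only cosmetic.

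The genuine gap is in the middle inequalities $\edell\Lambda\le\ddell\Lambda$ and $\edell\Lambda\le\subddell\Lambda$, which you dispose of as ``purely comparing conditions, with no further homological input.'' This rests on a misreading of the definitions: $\subddell M=\inf\{\dell N\mid M\inc N\}$ and $\ddell M$ is defined via finite exact sequences $0\to C_n\to\cdots\to C_0\to M\to 0$ in $\mod\Lambda$ in which each term $C_i$ satisfies a graded bound $(i+1)\text{-}\dell C_i\le m-i$. Neither is a ``splitting visible in $D^b(\Lambda)$''; they are conditions on \emph{other} modules mapping to or from $M$, not weakenings of a splitting of $\Omega^n M$. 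Consequently, the step from $\ddell$ (or $\subddell$) to $\edell$ is not a formal implication between three strengths of one splitting condition. The paper needs real homological work here: the rotation lemma (Lemma \ref{lem:rotating_ses}) converting $0\to A\to B\to C\to 0$ into $0\to\Omega C\to A\oplus P_C\to B\to 0$ (iterated to produce, e.g., $0\to\Omega^n B\to\Omega^n C\to\Omega^{n-1}A\to 0$), and then a base-case/inductive-step pair (Lemmas \ref{lem:subddell_base_case}--\ref{lem:subddell_inductive_case} for $\subddell$, Lemmas \ref{lem:v1_induction}--\ref{lem:v2_induction} for $\ddell$) in which one applies $\Ext^{k'}_\Lambda(-,X)$ to the rotated sequence, kills the two outer terms using the $\kdell$- and $\kedell$-hypotheses, and concludes the middle term vanishes; the $k$-indexed family of invariants is exactly what makes the induction on the length $n$ of the exact sequence close. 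None of this is present in, or recoverable from, your sketch. By contrast, your one-line argument for $\subddell\Lambda\le\dell\Lambda$ (take $N=S$) is fine, and $\ddell\Lambda\le\dell\Lambda$ is also easy once one notices the truncated projective resolution $0\to\Omega^n S\to P_{n-1}\to\cdots\to P_0\to S\to 0$ is an admissible exact sequence --- again a small check, not the derived-category hand-wave you offer.
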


The upper bound given by the sub-derived delooping level $\subddell\Lambda$ considers all modules that every simple module $S$ can embed into. Specifically, we define for every $\Lambda$-module $M$
\[
\subddell M = \inf\{\dell N \mid M \inc N\},
\]
and prove a new upper bound $\subddell\Lambda$ for $\Findim\Lambda^{\op}$
\[
\subddell \Lambda = \sup \{ \subddell S \mid \text{$S$ is a simple $\Lambda$-module} \}.
\]
On the other hand, the upper bound given by the derived delooping level $\ddell\Lambda$ considers all exact sequences that end in a simple module $S$. We define for every $\Lambda$-module $M$ the more general $k$-delooping level
\[
\kdell M = \inf \{n\in\N \mid \Omega^n M \text{ is a direct summand of } \Omega^{n+k}N \text{ for some $N\in\smod\Lambda$} \},
\]
the derived delooping level
\begin{align*}
\ddell M = \inf \{m\in\N \mid & \,\exists n\leq m \text{ and an exact sequence in $\mod\Lambda$ of the form} \\
& \,\, 0 \to C_n \to C_{n-1} \to \cdots \to C_1 \to C_0 \to M \to 0, \\
& \text{ where $(i+1)$-$\dell C_i\leq m-i$, } i=0,1,\dots,n  \},
\end{align*}
and prove another new upper bound $\ddell\Lambda$ for $\Findim\Lambda^{\op}$
\[
\ddell\Lambda = \sup \{ \ddell S \mid \text{$S$ is a simple $\Lambda$-module} \}.
\]

Moreover, the set $\mathcal{F}_1(\Lambda)$ of finitely generated $\Lambda$-modules with finite derived delooping level is closed under extensions, submodules, and direct sums (Theorem \ref{thm: finite ddell is torsion-free class}), making it a \textbf{torsion-free class} in $\mod\Lambda$. However, the set of finitely generated $\Lambda$-modules with finite delooping level is not closed under extensions by \cite{kershaw2023}. See Remark \ref{rem: dell not closed under ext} for details.

We also remark that the inequalities $\ddell\Lambda\leq\dell\Lambda$ and $\subddell\Lambda\leq\dell\Lambda$ can be strict as we will show in Examples \ref{ex:monomial example revisited subddell} and \ref{ex:monomial example revisited ddell}. More general definitions and theorems involving the derived delooping level and sub-derived delooping level are stated in Definitions \ref{def:subddell}, \ref{def:ddell} and Theorems \ref{thm:subddell} and \ref{thm:ddell}.

It is curious to see that the delooping level bounds the Findim of the \textit{opposite} algebra, while many other related invariants like the repetition index \cite{goodearl2014repetitive}, the $\phi$-dimension $\phidim$, and the $\psi$-dimension $\psi$-$\dim$ \cite{igusa2005}, all upper bound the findim of the \textit{same} algebra. Interestingly, we are able to compare the delooping level and the $\phi$-dimension of the \textit{same} algebra $\Lambda$, even if the finiteness of the former implies $\Findim\Lambda^{\op}<\infty$ and the latter $\findim\Lambda<\infty$. In fact, it seems more difficult to compare $\dell\Lambda$ and $\phidim\Lambda^{\op}$. Our second result provides a sufficient condition for the findim conjecture and compares $\dell\Lambda$ and $\phidim\Lambda$ in that case. The sufficient condition ($T_{\Lambda}$ is a finite set in Theorem \ref{thm:theorem2}) for the findim conjecture was already observed in \cite[Observation 2.5]{goodearl2014repetitive}, but we add the comparison between $\dell\Lambda$ and $\phidim\Lambda$.

\begin{theorem}
\label{thm:theorem2}
For any finite dimensional algebra $\Lambda$ over a field $K$, let $T_{\Lambda}$ be the set of non-projective indecomposable summands of syzygies of simple $\Lambda$-modules, including all simple modules. If $T_{\Lambda}$ is a finite set, then
\[
\Findim\Lambda^{\op} \leq \subddell\Lambda \text{ or } \ddell\Lambda \leq \dell \Lambda \leq \phidimT\Lambda \leq \phidim\Lambda,
\]
and in particular, since $\phidimT\Lambda$ is finite, the finitistic dimension conjecture holds for $\Lambda^{\op}$.
\end{theorem}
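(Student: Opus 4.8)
The plan is to prove the chain of inequalities from right to left, reducing everything to the finiteness of $\phidim_T \Lambda$, where $\phidim_T$ should be the $\phi$-dimension computed using only the (finite) additive generator of the modules in $T_\Lambda$. First I would record the trivial inequality $\phidim_T \Lambda \leq \phidim \Lambda$, which holds because $T_\Lambda$ is a subset of all indecomposables and $\phi$ is monotone under passing to summands of the generating set. The substantive claim on the right is $\dell \Lambda \leq \phidim_T \Lambda$. Here I would use the hypothesis that $T_\Lambda$ is finite: let $T$ be the direct sum of all modules in $T_\Lambda$ and consider the syzygy operator $\Omega$ restricted to $\add T$, which (up to projective summands) maps $\add T$ to itself since every syzygy of a simple module, hence every syzygy of a summand of such, lies in $\add T$. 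The key point is that for a simple module $S$, the sequence $\Omega S, \Omega^2 S, \dots$ eventually becomes "periodic" in $\add T$ in the sense that the $\Omega$-orbit stabilizes onto a finite set of indecomposables; the $\phi$-function quantifies exactly when the ranks in the Grothendieck-style free group on $T_\Lambda$ stabilize, and once the multiset of indecomposable summands of $\Omega^n S$ stops growing, $\Omega^n S$ becomes a summand of $\Omega^{n+1}(\Omega\text{-something})$, giving $\dell S \leq \phidim_T \Lambda$ by the definition of the delooping level. Taking the supremum over simple $S$ gives $\dell \Lambda \leq \phidim_T \Lambda$.

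For the left half of the chain, $\Findim \Lambda^{\op} \leq \subddell \Lambda$ (or $\ddell \Lambda$) $\leq \dell \Lambda$, I would simply invoke Theorem 1.1 (i.e., \texttt{thm:theorem1}), which already establishes exactly these inequalities with no finiteness hypothesis; nothing new is needed here. The final clause, that the findim conjecture holds for $\Lambda^{\op}$, then follows because $\phidim_T \Lambda$ is finite: indeed $\phidim_T \Lambda$ is a supremum of values of $\phi$ over finitely generated modules built from a \emph{finite} list $T_\Lambda$ of indecomposables, and such a $\phi$-dimension over a finitely generated additive category is always finite by the standard Igusa–Todorov argument (the $\phi$-function is eventually constant on the finitely generated free abelian monoid because $\Omega$ acts as an endomorphism with finitely many "eigen-orbits"). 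Hence $\Findim \Lambda^{\op} \leq \dell \Lambda \leq \phidim_T \Lambda < \infty$.

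The main obstacle I anticipate is making precise and correct the claim that $\dell \Lambda \leq \phidim_T \Lambda$ — specifically, translating the stabilization of the $\phi$-function (a statement about ranks of subgroups generated by syzygy classes in a free abelian group on $T_\Lambda$) into the summand condition $\Omega^n S \mid \Omega^{n+1} N$ that defines the delooping level. One has to be careful that $\phi$ stabilizing tells us the \emph{image lattice} stabilizes, which one must then upgrade to an actual split monomorphism at the module level; this is where the finiteness of $T_\Lambda$ is essential, since it forces the Krull–Schmidt multiset of summands of $\Omega^n S$ to take values in a finite set and hence to be eventually "recurrent," allowing one to realize $\Omega^n S$ inside some later syzygy. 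I would handle this by working with the (finite) directed graph on $T_\Lambda$ whose edges record indecomposable summands of syzygies, and using a pigeonhole/eventual-periodicity argument on walks in this graph, matching the bound to the index at which $\phi$ first achieves its eventual value.
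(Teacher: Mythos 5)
Your overall structure matches the paper's: you correctly delegate the left half of the chain to Theorem \ref{thm:theorem1}, note that $\phidimT\Lambda\le\phidim\Lambda$ is immediate from the definitions, and observe that $\phidimT\Lambda<\infty$ when $T_\Lambda$ is finite (the paper cites Fitting's Lemma; your ``Igusa--Todorov argument'' is the same idea). You also correctly pinpoint the crux: converting the stabilization $L^n\langle\add T\rangle\cong L^{n+1}\langle\add T\rangle$ of subgroup ranks into the split-monomorphism condition $\Omega^n S\dirsum\Omega^{n+1}N$ needed for $\dell S\le n$.

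However, your proposed resolution of that crux has a genuine gap. You assert that finiteness of $T_\Lambda$ ``forces the Krull--Schmidt multiset of summands of $\Omega^n S$ to take values in a finite set and hence to be eventually recurrent.'' That is false: the set of \emph{distinct} indecomposable summands is drawn from the finite list $T_\Lambda$, but their \emph{multiplicities} can grow without bound (e.g.\ $\Omega S\cong S\oplus S$ forces the number of copies of $S$ in $\Omega^n S$ to double each step), so the multiset is not eventually recurrent and a pigeonhole on walks in a finite directed graph does not apply. Even if one patches this to a finiteness statement, such a graph-theoretic bound is controlled by path lengths in $T_\Lambda$, not by $\phidimT\Lambda$, so it would not yield the sharp inequality $\dell\Lambda\le\phidimT\Lambda$ claimed by the theorem.

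The idea you are missing is the finite-index argument. Since $T_\Lambda$ is closed under syzygies, $L$ restricts to $\langle\add T\rangle$ and $L^{n+1}\langle\add T\rangle\subseteq L^n\langle\add T\rangle$. If $\phidimT\Lambda=n$, these two subgroups are free abelian of the same finite rank, so the quotient is a finite abelian group; let $m$ be its order. Then $m[\Omega^n T_i]\in L^{n+1}\langle\add T\rangle$, i.e.\ $m[\Omega^n T_i]=[\Omega^{n+1}A]-[\Omega^{n+1}B]$ for some $A,B\in\add T$, equivalently $m[\Omega^n T_i]+[\Omega^{n+1}B]=[\Omega^{n+1}A]$. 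Comparing nonnegative coefficients in the free basis of $K_0$ given by indecomposable non-projectives, every indecomposable summand of $\Omega^n T_i$ appears in $\Omega^{n+1}A$ with at least the same multiplicity (since $m\ge1$), so $\Omega^n T_i\dirsum\Omega^{n+1}A$ and $\dell T_i\le n$. Taking the supremum gives $\dell\Lambda\le\phidimT\Lambda$ exactly.
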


Here we present some background definitions for the rest of the paper. Throughout the paper, let $\Lambda$ be a finite dimensional algebra over a field $K$. Let $\mod\Lambda$ and $\Mod\Lambda$ be the category of finitely generated right $\Lambda$-modules and the category of all right $\Lambda$-modules, respectively. Let $\smod\Lambda$ be the \textbf{stable module category. Since all of our calculations are mod projectives, unless stated otherwise, exact sequences in the sequel may omit projective direct summands.} Given $\Lambda$, we use the term ``module'' to mean \textbf{right} $\Lambda$-modules, so left $\Lambda$-modules are $\Lambda^{\op}$-modules. Morphisms between modules are referred to as maps for simplicity. Since every module $M$ has its projective cover $P_M$ and injective envelope $I_M$, we define the \textbf{syzygy} $\Omega M$ of $M$ (resp. \textbf{cosyzygy} $\Sigma M$ of $M$), as the kernel of the surjection $P_M\onto M$ (resp. the cokernel of the embedding $M\inc I_M$). Let $D=\Hom_k(-,k)$ be the standard duality functor.

We structure the rest of the paper as follows. In Section \ref{sec:main_theorems}, we recall the definition of the delooping level and introduce our new invariants. We investigate the relationship among these invariants and prove our main Theorem \ref{thm:theorem1}. Section \ref{sec:trivial_extensions} contains our computation of the recent example in \cite{kershaw2023}, where $\ddell\Lambda=\Findim\Lambda^{\op}=1<\infty=\dell\Lambda$. This further shows the derived delooping level is a refinement over the delooping level. In Section \ref{sec:dell_vs_phidim}, we compare $\dell\Lambda$ and $\phidim\Lambda$ under a commonly satisfied condition and show they are both finite in that case. In Section \ref{sec:future_directions}, we discuss more questions about the findim conjecture and our new invariants. In particular, we show that the set of all modules with finite $\ddell\!$ forms a torsion-free class.

\textbf{Acknowledgments.} The second author would like to thank Gordana Todorov and Emre Sen for stimulating conversations about the contents of this paper. The authors also thank the anonymous referee for very helpful comments. The second author is grateful to the Simons Foundation for their support: Grant \# 686616.

\section{Old and New Invariants}
\label{sec:main_theorems}

We start with two well-known lemmas which will be used repeatedly throughout the section. We also include the proofs here for completeness.

\begin{lemma}
\label{lem:ext_simple}
If a module $X$ has finite injective dimension $n$, then there exists a simple module $S$ such that $\Ext_{\Lambda}^n(S,X)\neq 0$.
\end{lemma}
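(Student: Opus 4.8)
The plan is to read $S$ off the top term of a minimal injective resolution of $X$. First I would fix the minimal injective resolution
\[
0 \to X \xrightarrow{\epsilon} I^0 \xrightarrow{d^0} I^1 \xrightarrow{d^1} \cdots \xrightarrow{d^{n-1}} I^n \to 0,
\]
which genuinely terminates in degree $n$ with $I^n\neq 0$ exactly because $\id X = n$. The only structural input I need from minimality is that each $\ker d^k = \im d^{k-1}$ (with the convention $\im d^{-1} := \im\epsilon$) is an \emph{essential} submodule of $I^k$; since every module in sight has finite length, its socle is essential, so essentiality of $\ker d^k$ forces $\soc I^k \subseteq \ker d^k$ for each $k$.

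Next I would choose a simple submodule $\iota\colon S \hookrightarrow \soc I^n$, which is possible since $I^n\neq 0$, and claim that this $S$ works. Computing $\Ext$ from the injective resolution and using $I^{n+1}=0$ gives
\[
\Ext_\Lambda^n(S,X) \;=\; \coker\!\big(\Hom_\Lambda(S,I^{n-1}) \xrightarrow{(d^{n-1})_*} \Hom_\Lambda(S,I^n)\big)
\]
(for $n=0$ this degenerates to $\Hom_\Lambda(S,X)\neq 0$, so one may assume $n\ge 1$). The element $\iota\in\Hom_\Lambda(S,I^n)$ is nonzero; if it were a coboundary, say $\iota = d^{n-1}\circ g$ for some $g\colon S\to I^{n-1}$, then $\im g$ would be a simple or zero submodule of $I^{n-1}$, hence $\im g \subseteq \soc I^{n-1} \subseteq \ker d^{n-1}$ by the previous step, whence $\iota = d^{n-1}\circ g = 0$, a contradiction. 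Therefore $\iota$ represents a nonzero class in $\Ext_\Lambda^n(S,X)$, as desired.

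This is a standard fact, so I do not anticipate a real obstacle; the only points that need care are the minimality input (the socle of each injective term lies in the image of the incoming differential) and the bookkeeping that identifies $\Ext_\Lambda^n(S,X)$ with the cokernel at the end of the resolution. An alternative route — dimension-shifting down to the cosyzygy $\Sigma^{n-1}X$ of injective dimension $1$ and invoking that a non-injective module admits a nonzero $\Ext^1$ from some simple module — also works, but the direct argument above is shorter and self-contained.
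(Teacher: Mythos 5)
Your proof is correct, and its setup coincides with the paper's: take a minimal injective resolution $0\to X\to I^0\to\cdots\to I^n\to 0$, pick a simple $S\subseteq\soc I^n$, and compute $\Ext^n_\Lambda(S,X)$ as the cokernel of $(d^{n-1})_*$. The two proofs diverge at the key step, namely showing that the inclusion $\iota\colon S\hookrightarrow I^n$ is not a coboundary. The paper's proof assumes $\iota=f_n j$ for some $j\colon S\to I^{n-1}$, uses the injectivity of $I^{n-1}$ to extend $j$ along $\iota$ to a map $g\colon I^n\to I^{n-1}$ with $f_n g\iota=\iota$, and then (arguing over all simple summands of $\soc I^n$) concludes that $f_n$ splits, contradicting minimality. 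You instead observe directly that $\im g$ is simple or zero, hence $\im g\subseteq\soc I^{n-1}\subseteq\ker d^{n-1}$ (the second inclusion because minimality makes $\ker d^{n-1}=\im d^{n-2}$ essential, and an essential submodule contains every simple submodule, hence the socle), so $\iota=d^{n-1}g=0$ at once. Your route is shorter and more self-contained: it needs no injectivity lift and no iteration over the socle summands, only the standard characterization that in a minimal injective resolution the socle of each term lies in the image of the incoming differential. Both arguments are valid; yours localizes the contradiction to a single simple $S$, whereas the paper's manufactures a global splitting.
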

\begin{proof}
Let the minimal injective resolution of $X$ be
\[
0\to X\to I_0\to I_1 \to\cdots\to I_{n-1} \xrightarrow{f_n} I_n\to 0,
\]
and let $S$ be a simple direct summand of $\soc I_n$, the socle of $I_n$.

By definition, $\Ext_{\Lambda}^n(S,X) = \Hom_{\Lambda}(S, I_n) / (f_n)_*(\Hom_{\Lambda}(S,I_{n-1}))$. If  $\Ext_{\Lambda}^n(S,X) = 0$, then the inclusion $i$ from $S$ to $I_n$ factors as $i = S\xhookrightarrow{j} I_{n-1} \xrightarrow{f_n} I_n$. Since $I_{n-1}$ is injective, there is a map $g:I_n \to I_{n-1}$ such that $j=gi$. This implies $f_ngi=i$, so $f_ng: I_n\to I_n$ sends a copy of $I_S$ identically to itself. This holds for all simple summands of $\soc I_n$, so $0\to\ker f_n\to I_{n-1}\to I_n\to 0$ splits, contradicting the fact that the injective resolution is minimal.

In particular, the proof shows that all simple summands of $\soc I_n$ satisfy the lemma.
\end{proof}

\begin{lemma}
\label{lem:ext_syz}
For positive integers $n_1, n_2$ and modules $M, N$,
\[
\Ext_{\Lambda}^{n_1+n_2}(M,N)=\Ext_{\Lambda}^{n_1}(\Omega^{n_2}M,N).
\]

Dually,
\[
\Ext_{\Lambda}^{n_1+n_2}(M,N)=\Ext_{\Lambda}^{n_1}(M,\Sigma^{n_2}N).
\]
\end{lemma}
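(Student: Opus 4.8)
The plan is to prove the first identity by dimension shifting and then obtain the second by dualizing. First I would reduce to the case $n_2 = 1$: once we know $\Ext_\Lambda^{m+1}(M,N) \cong \Ext_\Lambda^m(\Omega M, N)$ for every $m \geq 1$ and every module $M$, applying this repeatedly — replacing $M$ by $\Omega M, \Omega^2 M, \dots$ and $m$ by $n_1+n_2-1, n_1+n_2-2, \dots, n_1$ — yields $\Ext_\Lambda^{n_1+n_2}(M,N) \cong \Ext_\Lambda^{n_1}(\Omega^{n_2}M, N)$.

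For the base case I would take the short exact sequence $0 \to \Omega M \to P \to M \to 0$ with $P \onto M$ a projective cover, apply $\Hom_\Lambda(-,N)$, and read off the long exact sequence
\[
\cdots \to \Ext_\Lambda^m(P,N) \to \Ext_\Lambda^m(\Omega M,N) \to \Ext_\Lambda^{m+1}(M,N) \to \Ext_\Lambda^{m+1}(P,N) \to \cdots.
\]
Since $P$ is projective, $\Ext_\Lambda^m(P,N) = 0 = \Ext_\Lambda^{m+1}(P,N)$ for all $m \geq 1$, so the connecting homomorphism $\Ext_\Lambda^m(\Omega M,N) \to \Ext_\Lambda^{m+1}(M,N)$ is an isomorphism, which is exactly the $n_2 = 1$ case. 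This also makes transparent why positivity of $n_1$ is needed: at $m = 0$ the map $\Hom_\Lambda(P,N) \to \Hom_\Lambda(\Omega M,N)$ need not vanish, so the connecting map need not be injective.

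The dual identity follows by the mirror-image argument: from $0 \to N \to I \to \Sigma N \to 0$ with $I$ an injective envelope, apply $\Hom_\Lambda(M,-)$ and use $\Ext_\Lambda^m(M,I) = 0$ for $m \geq 1$ to get $\Ext_\Lambda^m(M,\Sigma N) \cong \Ext_\Lambda^{m+1}(M,N)$ for $m \geq 1$, then induct on $n_2$ as before. Alternatively it can be deduced from the first identity via the duality $D$, which interchanges $\mod\Lambda$ and $\mod\Lambda^{\op}$, carries syzygies to cosyzygies, and satisfies $\Ext_\Lambda^i(M,N) \cong \Ext_{\Lambda^{\op}}^i(DN,DM)$. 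I do not expect a genuine obstacle here; the only point deserving a remark is the paper's convention that syzygies are taken via projective covers and exact sequences are considered modulo projective summands — but since projective (resp. injective) modules have vanishing higher Ext on the relevant side, replacing $\Omega^{n_2}M$ (resp. $\Sigma^{n_2}N$) by a summand-adjusted representative leaves every Ext group in sight unchanged.
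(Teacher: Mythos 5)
Your proof is correct and takes essentially the same route as the paper: both arguments are the standard dimension shift obtained by applying $\Hom_\Lambda(-,N)$ to $0 \to \Omega M \to P \to M \to 0$ and using $\Ext_\Lambda^{\geq 1}(P,N)=0$, iterated $n_2$ times (the paper chains the isomorphisms explicitly; you phrase it as induction, but the content is identical). The only difference is cosmetic: the paper disposes of the case $\pd M \leq n_2$ separately and declines to write out the dual, whereas you handle degeneracies inline and sketch the dual via injective envelopes — neither change amounts to a different method.
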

\begin{proof}
Since the dual statement is not used explicitly in the paper and the proofs of the two statements are similar, we only prove the first statement.

If $\pd M\leq n_2$, then $\Omega^{n_2}M$ is either projective or zero, so both $\Ext_{\Lambda}^{n_1+n_2}(M,N)$ and $\Ext_{\Lambda}^{n_1}(\Omega^{n_2}M,N)$ are zero.
Suppose $\pd M > n_2$. Then we have the following exact sequence
\[
0\to \Omega^{n_2} M \to P_{n_2-1} \to \cdots \to P_1 \to P_0 \to M \to 0.
\]
and $n_2$ short exact sequences
\begin{equation}
\label{eq:ext_ses}
0\to \Omega^{n}M \to P_{n-1} \to \Omega^{n-1}M \to 0
\end{equation}
for $n=1,2,\dots,n_2$.

We look at the following portions of long exact sequences induced by $\Ext(-,N)$:
\[
\Ext_{\Lambda}^{n_1}(P_{n_2-1},N) \to \Ext_{\Lambda}^{n_1}(\Omega^{n_2}M,N) \to \Ext_{\Lambda}^{n_1+1}(\Omega^{n_2-1}M,N) \to \Ext_{\Lambda}^{n_1+1}(P_{n_2-1},N),
\]
\[
\Ext_{\Lambda}^{n_1+1}(P_{n_2-2},N) \to \Ext_{\Lambda}^{n_1+1}(\Omega^{n_2-1}M,N) \to \Ext_{\Lambda}^{n_1+2}(\Omega^{n_2-2}M,N) \to \Ext_{\Lambda}^{n_1+2}(P_{n_2-2},N),
\]
and so on. Therefore, $$\Ext_{\Lambda}^{n_1}(\Omega^{n_2}M,N) \cong \Ext_{\Lambda}^{n_1+1}(\Omega^{n_2-1}M,N) \cong \Ext_{\Lambda}^{n_1+2}(\Omega^{n_2-2}M,N) \cong \cdots \cong \Ext_{\Lambda}^{n_1+n_2}(M,N). \qedhere $$
\end{proof}

We recall the definition of the delooping level and use the notation $M \dirsum N$ to mean $M$ is a \textbf{direct summand} of $N$. {\color{black}Let $\N$ denote the set of nonnegative integers. The infimum of the empty set is $+\infty$.}

\begin{definition}\cite{gelinas2022}
\label{def:dell}
Let $M$ be a $\Lambda$-module.
\begin{enumerate}
\item The delooping level of $M$ is
\[
\dell M = \inf \{n\in\N \mid \Omega^n M \dirsum \Omega^{n+1}N \text{ up to projective summands for some $N\in\smod\Lambda$} \}.
\]
\item The delooping level of $\Lambda$ is
\[
\dell\Lambda = \sup \{\dell S \mid \text{$S$ is a simple $\Lambda$-module} \}.
\]
\end{enumerate}
\end{definition}

\begin{remark}
The original definition of the delooping level is 
\begin{align*}
\dell M & = \inf \{n\in\N \mid \Omega^n M \text{ is a stable retract of } \Omega^{n+1}N \text{ for some $N\in\smod \Lambda$} \} \\
& = \inf \{n\in\N \mid \Omega^n M\xrightarrow{s} \Omega^{n+1} N \xrightarrow{\pi} \Omega^n M = \id\!_{\Omega^n M} \} \\
& = \inf \{n\in\N \mid \text{the short exact sequence } 0\to \Omega^n M \xrightarrow{s} \Omega^{n+1} N \to \coker{s} \to 0 \text{ splits} \},
\end{align*}
which is equivalent to Definition \ref{def:dell}. 
\end{remark}

A natural extension of Definition \ref{def:dell} is the $k$-delooping level.

\begin{definition}
\label{def:kdell}
Let $M$ be a $\Lambda$-module and $k\in \N\setminus \{0\}$. 
\begin{enumerate}
\item The \textbf{$k$-delooping level} of $M$ is
\[
\kdell M = \inf \{n\in\N \mid \Omega^n M \dirsum \Omega^{n+k}N \text{ up to projective summands for some $N\in\smod\Lambda$} \}.
\]
\item The \textbf{$k$-delooping level} of $\Lambda$ is
\[
\kdell\Lambda = \sup \{\kdell S \mid \text{$S$ is a simple $\Lambda$-module} \},
\]
\end{enumerate}
\end{definition}

{\color{black} Note that we allow $k=0$; this is for convenience when introducing $\ksubddell\!$ in Definition \ref{def:subddell}. Since $\Omega^0 M = M \dirsum \Omega^0 M$, we always have $0$-$\dell M=0$-$\dell\Lambda=0$. If $k>1$, $\kdell\Lambda$ is in general not a better upper bound for $\Findim\Lambda^{\op}$ than $\dell\Lambda$ since $\dell M \leq\kdell M$ for every module $M$ by definition. Therefore, the role of the $k$-delooping level in this paper is purely auxiliary when we introduce and prove results on the other two new invariants.

We believe that $\dell\Lambda\leq\kdell\Lambda$ is the most we can say about the relationship between them, in the sense that $\kdell\Lambda$ can be equal for all $k\neq 0$. We provide one of the many examples of this phenomenon, where $\kdell\Lambda=1$ for all $k\neq 0$.

\begin{example}[$k\neq 0$]
\label{ex:kedell_is_1_for_all_k}
Let $Q$ be the quiver
$\begin{tikzcd} 
{} & 3 \arrow[dl] & {} \\
1 \arrow[rr] & {} & 2 \arrow[lu]
\end{tikzcd}$ such that the indecomposable projective modules of $KQ/I$ are
$\begin{matrix}
1 \\ 2 \\ 3
\end{matrix}$,
$\begin{matrix}
2 \\ 3
\end{matrix}$, and
$\begin{matrix}
3 \\ 1 \\ 2
\end{matrix}$. Then it is clear that $\kdell S_1=1$ for all $k$ since $S_1$ cannot be a syzygy and $\Omega S_1$ is projective.

From the projective resolutions of $S_2$ and $S_3$, we notice that $S_3=\Omega^{2n} S_3$ and $S_3=\Omega^{2n+1} S_2$ for all $n\in \N$, so $\kdell S_3=0$ for all $k$.

For the simple module at 2, we have $\dell S_2=0$ since $S_2=\Omega \left(\begin{matrix} 3 \\ 1 \end{matrix}\right) $, but $S_2$ is not a second syzygy. So $\kdell S_2=1$ for $k>1$ since $\Omega S_2=S_3$. Therefore, $\kdell\Lambda=1$ for all $k$.
\end{example}
}

\textbf{Terminology.} It is clear that if a module $M$ occurs as (direct summand of) syzygies of all orders, \textit{i.e.}, $\kdell M=0$ for all $k$, then whenever $M$ shows up as a summand of $\Omega^n S$ for some $n$, we can ignore $M$ when calculating $\kdell S$. In that case, we say the module $M$ is \textbf{infinitely deloopable}. For example, $S_3$ in Example \ref{ex:kedell_is_1_for_all_k} is infinitely deloopable, and when we get $\Omega S_2=S_3$, we immediately know $\kdell S_2\leq 1$ for all $k$.

{\color{black}
Next, we define the effective delooping level $\edell\!$ and show that it is indeed a reinterpretation of Findim. This is inspired by the original proof of $\Findim\Lambda^{\op}\leq \dell\Lambda$ in \cite{gelinas2022}, wherein we prove $\dell S\leq n$ for all simple modules $S$ implies $\id X\leq n$ for all modules $X$.

\begin{proof}[Proof of $\Findim\Lambda^{\op}\leq\dell\Lambda$.]
Suppose $\dell\Lambda=n$ so that for every simple module $S$, there exists a module $N_S$ such that $\Omega^n S\dirsum \Omega^{n+1} N_S$. Suppose for a contradiction that there is a module $X$ with $\id X=n+k'>n$. Then by Lemma \ref{lem:ext_simple}, there exists a simple module $S$ such that $\Ext_{\Lambda}^{n+k'}(S,X)\neq 0$. On the other hand, using Lemma \ref{lem:ext_syz}, we get
\[
\Ext_{\Lambda}^{n+k'}(S,X) \cong \Ext_{\Lambda}^{k'}(\Omega^n S,x) \dirsum \Ext_{\Lambda}^{k'}(\Omega^{n+1} N_S, X) \cong \Ext_{\Lambda}^{n+k'+1}(N_S,X)=0,
\]
which is a contradiction.
\end{proof}

The definition of the effective delooping level aims to pinpoint and generalize the key idea of this proof.
}

\begin{definition}
\label{def:edell}
Let $M$ be a $\Lambda$-module and $k,k'$ be positive integers. 
\begin{enumerate}
\item The \textbf{$k$-effective delooping level} of $M$ is
\[
\kedell M = \inf \{n\in\N \mid \id X = n+k-1+k'>n+k-1 \text{ implies } \Ext_{\Lambda}^{n+k'}(M,X)=0 \}.
\]
\item The \textbf{$k$-effective delooping level} of $\Lambda$ is
\[
\kedell \Lambda = \sup \{\kedell S\mid S \text{ is simple} \}
\]
\end{enumerate}
We drop the $k$ when it is 1. Thus, $\edell M:=1\text-\edell M$ and the effective delooping level of $\Lambda$ is defined to be the $1$-effective delooping level of $\Lambda$.
\end{definition}

It turns out this definition perfectly captures the big finitistic dimension when $k=1$.

{\color{black}
\begin{proposition}
\label{prop:edell_vs_findim}
$\Findim\Lambda^{\op} = \edell\Lambda$.
\end{proposition}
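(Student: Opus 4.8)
The plan is to identify both $\edell\Lambda$ and $\Findim\Lambda^{\op}$ with the quantity $g:=\sup\{\id X\mid X\in\Mod\Lambda,\ \id X<\infty\}$, the supremum of the finite injective dimensions of right $\Lambda$-modules. The first ingredient is the standard reduction $\Findim\Lambda^{\op}=g$: this is exactly the way $\Findim\Lambda^{\op}$ enters the proof of $\Findim\Lambda^{\op}\le\dell\Lambda$ reproduced above, where $\Findim\Lambda^{\op}>n$ is used to produce a module $X$ with $n<\id X<\infty$. I would cite it from \cite{gelinas2022}, or record a proof: one inclusion is immediate from the duality $D$, since over an Artin algebra arbitrary products of projectives are projective and arbitrary coproducts of injectives are injective, so $D$ takes an injective coresolution of a right module to a projective resolution of the corresponding left module; the other inclusion needs slightly more care. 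Granting this, it remains to prove $\edell\Lambda=g$, and the first step there is to unwind Definition \ref{def:edell} in the case $k=1$: for any module $M$, ``$\edell M\le n$'' means precisely that $\Ext_\Lambda^{\id X}(M,X)=0$ for every module $X$ with $n<\id X<\infty$; this condition is upward closed in $n$.

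To get $\edell\Lambda\le g$: if $g=\infty$ there is nothing to show; if $g<\infty$ then no module has injective dimension strictly between $g$ and $\infty$, so the condition defining ``$\edell M\le g$'' is vacuously satisfied for every module $M$, in particular for every simple module, whence $\edell\Lambda\le g$.

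To get $\edell\Lambda\ge g$: fix an arbitrary $n\in\N$ with $n<g$ and produce a simple module $S$ with $\edell S>n$. Since $g>n$, there is a module $X$ with $n<\id X<\infty$; set $m:=\id X$. As $m<\infty$, Lemma \ref{lem:ext_simple} supplies a simple module $S$ with $\Ext_\Lambda^{m}(S,X)\neq 0$, that is, $\Ext_\Lambda^{\id X}(S,X)\neq 0$ with $n<\id X<\infty$; this exhibits the failure of the condition defining ``$\edell S\le n$'', so $\edell S>n$ (the condition being upward closed in $n$) and hence $\edell\Lambda>n$. Letting $n$ run over all naturals below $g$ yields $\edell\Lambda\ge g$, and combined with the previous step this gives $\edell\Lambda=g=\Findim\Lambda^{\op}$.

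The only nontrivial ingredient is the reduction $\Findim\Lambda^{\op}=g$, which I expect to be the main point to nail down; everything beyond it is a direct unravelling of Definition \ref{def:edell} plus one application of Lemma \ref{lem:ext_simple}. As a sanity check the argument also gives $\edell\Lambda=\infty$ precisely when $\Findim\Lambda^{\op}=\infty$, consistent with the asserted equality.
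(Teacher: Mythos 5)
Your proposal is correct and takes essentially the same approach as the paper: identify $\Findim\Lambda^{\op}$ with the finitistic injective dimension of right $\Lambda$-modules (the paper uses this implicitly without flagging it), and then pin down $\edell\Lambda$ via the vacuity observation past the supremum on one side and Lemma \ref{lem:ext_simple} on the other. The only difference is organizational: you prove the two inequalities $\edell\Lambda\le g$ and $\edell\Lambda\ge g$, while the paper does a case split on $\edell\Lambda$ being finite or infinite; the underlying content is identical.
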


\begin{proof}
If $\edell\Lambda=\infty$, or equivalently, $\edell S=\infty$ for some simple module $S$, then by definition, for all $n\in\N$, there is some module $X$ with $\id X=n+k'>n$ such that $\Ext_{\Lambda}^{n+k'}(S,X)\neq 0$. This trivially guarantees there are modules with arbitrarily high injective dimension.

Now suppose $\edell \Lambda=n<\infty$. Then there exists a simple module $S$ such that $\edell S=n$, \textit{i.e.}, for every module $X$ such that $\id X=n+k'>n$, $\Ext_{\Lambda}^{n+k'}(S,X) = 0$. It is immediate that there can be no module $X$ with $\id X>n$ as it would contradict Lemma \ref{lem:ext_simple}. On the other hand, if every module $X$ with finite injective dimension has $\id X<n$, then $\edell \Lambda\leq n-1$ by definition since it is vacuously true. Therefore, there must exist a module $X$ with $\id X\geq n$. Since $\id X>n$ is impossible, $\Findim\Lambda^{\op}=n$.
\end{proof}

\begin{remark}
\label{rmk:kedell_gives_lower_bound_of_Findim}
If $\kedell\Lambda=n$ for $k>0$, we may use the last argument in the previous proof to show the lower bound $\Findim\Lambda^{\op}\geq n+k-1$. However, this is not useful in practice when $k>1$ since in that case, $\kedell\Lambda$ is always harder to compute than $\Findim\Lambda^{\op}$.
\end{remark}

We can easily compare $\kedell M$ and $\kdell M$. In fact, this will be a consequence of certain later theorems in the paper, but we present a proof here first.

\begin{lemma}
\label{lem:kedell_vs_kdell}
$\kedell M \leq \kdell M$.
\end{lemma}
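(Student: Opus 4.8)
The plan is to unwind both definitions and apply Lemma~\ref{lem:ext_syz} twice. If $\kdell M = \infty$ there is nothing to prove, so assume $\kdell M = n < \infty$; by definition there is a module $N \in \smod\Lambda$ such that $\Omega^n M$ is a direct summand of $\Omega^{n+k}N$ up to projective summands. I will check that this same $n$ satisfies the defining condition for $\kedell M$: given a module $X$ with $\id X = n + k - 1 + k' > n + k - 1$ (so that necessarily $k' \geq 1$), I must show $\Ext_{\Lambda}^{n+k'}(M,X) = 0$.

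First I would use Lemma~\ref{lem:ext_syz} to rewrite $\Ext_{\Lambda}^{n+k'}(M,X) \cong \Ext_{\Lambda}^{k'}(\Omega^n M, X)$; this is an instance of the cited equality when $n \geq 1$, and is a triviality when $n = 0$. Since $\Omega^n M$ is a summand of $\Omega^{n+k} N$ modulo projectives, and higher $\Ext$ out of a projective vanishes, $\Ext_{\Lambda}^{k'}(\Omega^n M, X)$ is a direct summand of $\Ext_{\Lambda}^{k'}(\Omega^{n+k} N, X)$, which by another application of Lemma~\ref{lem:ext_syz} (valid since $n+k \geq 1$) is isomorphic to $\Ext_{\Lambda}^{n+k+k'}(N, X)$.

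Finally I would observe that $n + k + k' = (n + k - 1 + k') + 1 = \id X + 1$, so $\Ext_{\Lambda}^{n+k+k'}(N, X) = 0$ because $X$ has injective dimension $\id X$. Combining the two isomorphisms with the summand relation yields $\Ext_{\Lambda}^{n+k'}(M,X) = 0$, hence $\kedell M \leq n = \kdell M$. I do not expect a real obstacle here; the only points requiring a little care are the degenerate case $n = 0$, where Lemma~\ref{lem:ext_syz} is applied only once, and the bookkeeping around projective summands, which are invisible to positive-degree $\Ext$.
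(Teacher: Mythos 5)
Your proposal is correct and matches the paper's proof essentially verbatim: both unwind the definition of $\kdell M$, apply Lemma~\ref{lem:ext_syz} twice, and use the vanishing of $\Ext_{\Lambda}^{n+k+k'}(N,X)$ from $\id X = n+k-1+k'$. The extra attention you pay to the cases $\kdell M=\infty$ and $n=0$ is sound but not a departure from the paper's argument.
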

\begin{proof}
Assume $\kdell M=n<\infty$ so that there exists a module $N$ such that $\Omega^n M\dirsum \Omega^{n+k} N$. To show $\kedell M\leq n$, we pick any module $X$ such that $\id X=n+k-1+k'>n+k-1$ and want to show that $\Ext_{\Lambda}^{n+k'}(M,X)=0$. By Lemma \ref{lem:ext_syz},
\[
\Ext_{\Lambda}^{n+k'}(M,X) \cong \Ext_{\Lambda}^{k'}(\Omega^n M,X) \dirsum \Ext_{\Lambda}^{k'}(\Omega^{n+k} N,X) \cong \Ext_{\Lambda}^{n+k+k'}(N,X) = 0.
\]
\end{proof}
}

As expected, the effective delooping level is difficult to compute in general as a reinterpretation of the big finitistic dimension since we need to consider all $X$ with high injective dimension and compute extensions. Nonetheless, we use this reinterpretation to improve the upper bound of the Findim by establishing relationships between the effective delooping level and our other new invariants.

Coming back to Gelinas' question about whether $\dell\Lambda=\Findim\Lambda^{\op}$, we want to motivate our new definitions with a counterexample and demonstrate the shortcoming of concentrating on the delooping level of simple modules instead of all modules. This example is particularly interesting because the algebra is monomial, one of the nicest and well-understood classes of algebras to consider and one that guarantees the finiteness of both findim and Findim.

\begin{example}
\label{ex:monomial_example}
(Revisited in Examples \ref{ex:monomial example revisited subddell} and \ref{ex:monomial example revisited ddell})
Let $Q$ be the quiver

\begin{center}
\begin{tikzcd}
1 \arrow[r,shift left,"\alpha_1"] \arrow[r,leftarrow,shift right,"\alpha_2",swap] & 2 \arrow[out=120,in=60,loop,looseness=3,"\beta"] \arrow[r,"\gamma"] & 3 \arrow[r,"\delta"] & 4 \arrow[r,"\epsilon"] & 5
\end{tikzcd}
\end{center}
with relations $\alpha_1\alpha_2, \alpha_1\beta, \alpha_1\gamma\delta, \beta^2, \beta\gamma, \beta\alpha_2, \alpha_2\alpha_1\gamma$. The indecomposable projective modules of the path algebra $\Lambda=KQ/I$, where $I$ is the ideal generated by the above relations, are

\begin{center}
$\begin{matrix}
1 \\ 2 \\ 3
\end{matrix}
\qquad
\begin{matrix}
& 2 & \\
2 & 1 & 3 \\
& 2 & 4 \\
& & 5
\end{matrix}
\qquad
\begin{matrix}
3 \\ 4 \\ 5
\end{matrix}
\qquad
\begin{matrix}
4 \\ 5
\end{matrix}
\qquad 
5$.
\end{center}

Since the simple modules at 2, 3, and 5 are summands of the socle of some projectives and 1, 4 are not, $\dell S_2 = \dell S_3 =\dell S_5 = 0$ and $\dell S_1, \dell S_4 \neq 0$. Since $\Omega S_4 = S_5$ is projective, $\dell S_4 = 1$. 

It is known that second syzygies of modules over a monomial algebra are direct sums of $q\Lambda$ where $q$ is a path of length $\geq 1$ \cite[Theorem I]{huisgen1991predicting}. Note that $\Omega S_1 = \begin{matrix} 2 \\ 3 \end{matrix}$ is one of such $q\Lambda$ only when $q$ is $\alpha_1$, so the only way in which $\begin{matrix} 2 \\ 3 \end{matrix}$ is a second or higher syzygy is when $S_1$ is a syzygy. This is clearly not the case, so $\dell S_1\neq 1$. 

Now, we compute that $\Omega^2 S_1 = \Omega\, \left(\begin{matrix} 2 \\ 3\end{matrix}\right) = S_2 \oplus \begin{matrix} 1 \\ 2 \end{matrix} \oplus \begin{matrix} 4 \\ 5 \end{matrix}$. The module $\begin{matrix} 4 \\ 5 \end{matrix}$ is projective. The modules $S_2$ and $\begin{matrix} 1 \\ 2 \end{matrix}$ are direct summands of $\Omega^k S_2$ for any $k>0$, so they are infinitely deloopable. This implies $\dell S_1 = 2$. Therefore, we get $\dell\Lambda = 2$.

{\color{black}
We claim that $\Findim\Lambda^{\op}=1$. There are several ways to prove this, and we present one using another useful theorem in \cite{huisgen1991predicting}. The quiver of the opposite algebra $\Lambda^{\op}$ is
\begin{center}
\begin{tikzcd}
1 \arrow[r,shift left,"\alpha_1", leftarrow] \arrow[r,shift right,"\alpha_2",swap] & 2 \arrow[out=120,in=60,loop,looseness=3,"\beta"] \arrow[r,"\gamma", leftarrow] & 3 \arrow[r,"\delta",leftarrow] & 4 \arrow[r,"\epsilon",leftarrow] & 5,
\end{tikzcd}
\end{center}
and the projective modules of $\Lambda^{\op}$ are
\begingroup 
\setlength\arraycolsep{1pt}
\begin{center}
$\begin{matrix}
1 \\ 2
\end{matrix}
\qquad
\begin{matrix}
& 2 & \\
1 &  & 2 \\
2 & & \\
\end{matrix}
\qquad
\begin{matrix}
3 \\ 2 \\ 1
\end{matrix}
\qquad
\begin{matrix}
4 \\ 3 \\ 2
\end{matrix}
\qquad 
\begin{matrix}
5 \\ 4 \\ 3 \\ 2
\end{matrix}.$
\end{center}
\endgroup

We refer to a useful definition of the number $s$ in Corollary II of \cite{huisgen1991predicting} and compute
\[
s=\sup \{ \pd q\Lambda^{\op} \mid \text{$q$ is a path of length $\geq 1$ and $\pd q\Lambda^{\op}<\infty$}\}.
\]

\color{black}
Then by verifying the conditions in Theorem VI and Remark 10 in \cite{huisgen1991predicting}, which we compile as Theorem \ref{thm:monomial_alg_Findim}, we will obtain $\findim\Lambda^{\op} = \Findim\Lambda^{\op} = s+1 = 1$.
\color{black}
\begin{theorem}
\label{thm:monomial_alg_Findim}
Let $s\geq 0$ and factor all paths $q_1, \dots, q_t$ of lengths $\geq 1$ such that $\pd q_j\Lambda^{\op} = s$ into the form $q_j=p_jr_j$ for $1\leq j\leq t$, where $r_j$ is a path of length 1. In particular, if $q_j$ is an arrow, $r_j=q_j$ and $p_j$ is $e_{s(r_j)}$, the trivial path at $s(r_j)$. If for all $j=1,\dots,t$, 
\begin{itemize}
\item $wp_j=0$ for all paths $w$ in $\mathrm{l.ann}\, q_j$, the left annihilator of $q_j$, or
\item for each set of paths $A\subseteq \mathrm{l.ann}\, q_j$ with $Ap_j\neq 0$, there exists a summand $\beta\Lambda^{\op}$ of $\mathrm{r.ann}\, A$, the right annihilator of $A$, such that $\beta$ is an arrow with $\pd \beta\Lambda^{\op} = \infty$,
\end{itemize}
then $\findim\Lambda^{\op} = \Findim\Lambda^{\op} = s+1$.
\end{theorem}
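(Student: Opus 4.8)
Write $\Gamma := \Lambda^{\op}$, which is again monomial. The plan is to derive Theorem~\ref{thm:monomial_alg_Findim} from \cite[Theorem~VI and Remark~10]{huisgen1991predicting} applied to $\Gamma$, after checking that the two bulleted conditions above faithfully transcribe the combinatorial hypotheses there. First I would dispose of the ``soft'' inequalities. Second syzygies of $\Gamma$-modules decompose as direct sums of path modules $q\Gamma$ with $q$ a path of length $\ge 1$ by \cite[Theorem~I]{huisgen1991predicting}, whence $\findim\Gamma \le s+1$ by \cite[Corollary~II]{huisgen1991predicting}, and likewise $\Findim\Gamma \le s+1$ via Remark~10. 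Conversely, since there are only finitely many paths the supremum defining $s$ is attained by some path $q$ with $\pd q\Gamma = s < \infty$, and $q\Gamma$ is a finitely generated module of projective dimension $s$, so $s \le \findim\Gamma \le \Findim\Gamma$. Thus everything reduces to excluding the possibility $\findim\Gamma = s$ and to establishing the identity $\findim\Gamma = \Findim\Gamma$, which is exactly what Theorem~VI and Remark~10 supply under their hypotheses.

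The substantive step is to match those hypotheses with our two bullets. The mechanism is that factoring $q_j = p_j r_j$ with $r_j$ an arrow exhibits $q_j\Gamma$ as a summand of a syzygy of $p_j\Gamma$ arising from a relation that obstructs $p_j$ along $r_j$; hence $p_j\Gamma$ (or a module obtained from it by prepending a path) has projective dimension $s+1$ unless that summand can be traded for path modules of strictly smaller projective dimension. The first bullet — $wp_j = 0$ for every $w$ in the left annihilator of $q_j$ — is precisely the case in which no such trade is available and $p_j\Gamma$ itself realizes the value $s+1$; the second bullet is the case in which one instead prepends a set of paths $A$ to $p_j$ and uses a summand $\beta\Gamma$ of the right annihilator of $A$ with $\beta$ an arrow of infinite projective dimension to block the only possible reduction, again producing a finitely generated module of projective dimension exactly $s+1$, and moreover forcing $\Findim\Gamma$ down to that value. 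I would carry this out by copying the precise statement of \cite[Theorem~VI]{huisgen1991predicting}, rewriting its one-sided annihilators under the dictionary $\Gamma = \Lambda^{\op}$, and matching its case distinction with ours clause by clause.

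The main obstacle is bookkeeping rather than mathematics: \cite{huisgen1991predicting} fixes a path-composition order and a handedness of modules, whereas Theorem~\ref{thm:monomial_alg_Findim} is phrased for right $\Lambda^{\op}$-modules, so one must pin down (i) which end of $q_j$ carries the length-one factor $r_j$, (ii) that ``left annihilator of $q_j$'' and ``right annihilator of $A$'' here correspond to the annihilators used there under $\Gamma = \Lambda^{\op}$, and (iii) that the equality $\findim\Gamma = \Findim\Gamma$ — genuine extra content, since $\findim$ and $\Findim$ need not agree for general monomial algebras — is indeed what Remark~10 contributes under these hypotheses. Once this dictionary is fixed, no further estimates are required and the proof is a direct appeal to the cited results.
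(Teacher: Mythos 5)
The paper offers no independent proof of this theorem: as stated immediately before it, Theorem~\ref{thm:monomial_alg_Findim} is presented explicitly as a compilation of Theorem~VI and Remark~10 of \cite{huisgen1991predicting}, so your plan of citing those results and carefully matching their hypotheses under the dictionary $\Gamma = \Lambda^{\op}$ is exactly the paper's own route. Your preliminary reduction — noting $\findim\Gamma \le s+1$ via Corollary~II, $s \le \findim\Gamma$ because $q\Gamma$ realizes projective dimension $s$, and then isolating the identity $\findim\Gamma = \Findim\Gamma$ as the genuine content supplied by Remark~10 — is correct and consistent with how the paper then uses the statement.
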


Note that we can also write
$$\mathrm{l.ann}\, q_j = \oplus \{\Lambda^{\op} u \mid \text{$u$ is a path of length $\geq 1$ which is left minimal with respect to $uq_j=0$}\},$$
where the left minimality condition means that $u'q_j\neq 0$ whenever $u=pu'$ for some arrow $p$.

Now, we will verify either of the two conditions in Theorem \ref{thm:monomial_alg_Findim} to show $\Findim\Lambda^{\op}=1$. A quick computation shows $s=0$ for $\Lambda^{\op}$ and the paths $q$ making $\pd q\Lambda^{\op}=0$ are arrows $\alpha_1$ and $\epsilon$. We factor them as $\alpha_1 = e_2\alpha_1$ and $\epsilon = e_5\epsilon$, and find 
\[
\mathrm{l.ann}\, \alpha_1 = \oplus\{\Lambda^{\op} u \mid u=\alpha_1, \beta, \alpha_1\alpha_2, \delta\gamma, \delta, \epsilon\},
\]
\[
\mathrm{l.ann}\, \epsilon = \oplus\{\Lambda^{\op} u \mid u=\alpha_1, \alpha_2, \beta, \gamma, \delta, \epsilon\}.
\]

We use the first condition in Theorem \ref{thm:monomial_alg_Findim} for $\epsilon$. It is clear that $we_5=0$ for all $w\in\{\alpha_1, \alpha_2, \beta, \gamma, \delta, \epsilon\}$ since vertex 5 is a source. For $\alpha_1$, three minimal paths $\alpha_1\alpha_2$, $\beta$, and $\delta\gamma$ terminate at vertex 2, so we use the second condition. The arrow $p=\alpha_1\alpha_2$ satisfies the condition, as $\alpha_1\alpha_2$ is a right annihilator of all of $\alpha_1, \alpha_2, \beta, \gamma, \delta, \epsilon$ and $\pd \alpha_1\alpha_2\Lambda^{\op} = \pd S_2 = \infty$. Therefore, we conclude $\Findim\Lambda^{\op}=1$ and we have created the gap $\dell\Lambda-\Findim\Lambda^{\op}=1$.
}
\end{example}

Our new definitions aim to shorten or close the gap between $\dell\Lambda$ and $\Findim\Lambda^{\op}$. The new names sub-derived and derived delooping levels come from rotating triangles in the derived category and its shift functor being the syzygy functor. To wit, we have the following lemma.

\begin{lemma}
\label{lem:rotating_ses}
If $0\to A \xrightarrow{f} B \xrightarrow{g} C\to 0$ is a short exact sequence in $\mod\Lambda$, then there is also an exact sequence
\begin{equation}
\label{eq:lem_rotating_ses}
0 \to \Omega C \xrightarrow{f'} A\oplus P_C \xrightarrow{g'} B\to 0,
\end{equation}
where $P_C$ is the projective cover of $C$.

More generally, if $0\to M_n \to \cdots \to M_1 \to M_0 \to 0$ is an exact sequence in $\mod\Lambda$, then there is also an exact sequence
\begin{equation}
\label{eq:rotating_ses_long}
0\to \Omega^k M_n \oplus P_n \cdots \to \Omega^k M_1 \oplus P_1 \to \Omega^k M_0 \to 0
\end{equation}
for all $k\in\Z_{>0}$ and some projectives $P_1,\dots,P_n$. Alternatively, we drop the projectives and write \eqref{eq:rotating_ses_long} as
\[
0\to \Omega^k M_n \cdots \to \Omega^k M_1 \to \Omega^k M_0 \to 0
\]
in $\smod\Lambda$, since all of our computations are mod projectives.
\end{lemma}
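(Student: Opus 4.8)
The plan is to prove the first (single short exact sequence) statement directly, then bootstrap to the long exact sequence by splicing, and finally iterate on $k$.

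First I would establish \eqref{eq:lem_rotating_ses}. Given $0\to A\xrightarrow{f} B\xrightarrow{g} C\to 0$ and the projective cover $p\colon P_C\twoheadrightarrow C$ with kernel $\Omega C$, I would form the pullback of $g$ and $p$. Because $P_C$ is projective, the map $P_C\to C$ lifts through $g$ to a map $h\colon P_C\to B$, and then $(f,h)\colon A\oplus P_C\to B$ is surjective (its image contains $\im f$ and hits everything mapping to $C$ via $h$). A diagram chase (or the snake lemma applied to the map of short exact sequences $0\to\Omega C\to P_C\to C\to 0$ against $0\to A\to B\to C\to 0$) identifies the kernel of $(f,h)$ with $\Omega C$, giving the desired $0\to\Omega C\xrightarrow{f'} A\oplus P_C\xrightarrow{g'} B\to 0$. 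This is the standard "horseshoe/rotation" argument and I expect it to be routine; the only mild care is checking surjectivity and the kernel identification simultaneously, which the snake lemma handles cleanly.

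Next I would prove \eqref{eq:rotating_ses_long} for $k=1$ by induction on the length $n$ of the exact sequence $0\to M_n\to\cdots\to M_0\to 0$. The base case $n=1$ is an isomorphism $M_1\cong M_0$, trivially handled. For the inductive step I would break the long exact sequence at $M_1$: write $K=\ker(M_1\to M_0)=\im(M_2\to M_1)$, so we have a short exact sequence $0\to K\to M_1\to M_0\to 0$ and a shorter exact sequence $0\to M_n\to\cdots\to M_2\to K\to 0$. Applying the $k=1$ case of \eqref{eq:lem_rotating_ses} to the short exact sequence yields $0\to\Omega M_0\to K\oplus P_0\to M_1\to 0$. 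Meanwhile, applying the inductive hypothesis (in the form with projectives, or better, working in $\smod\Lambda$) to the shorter sequence rotates it to $0\to\Omega M_n\oplus P_n\to\cdots\to\Omega M_2\oplus P_2\to \Omega K\to 0$; but I actually want $\Omega K$ replaced appropriately — so the cleaner bookkeeping is to splice the original short exact sequence $0\to K\to M_1\to M_0\to 0$ with $0\to M_n\to\cdots\to M_2\to K\to 0$ to recover the length-$n$ sequence, apply syzygies termwise using that $\Omega(-)$ of a short exact sequence behaves well up to projectives (which is precisely the $k=1$ rotation applied repeatedly), and assemble. Concretely, I would argue that each term $M_i$ ($1\le i\le n-1$) participates in a short exact sequence $0\to Z_i\to M_i\to Z_{i-1}\to 0$ where $Z_i=\im(M_{i+1}\to M_i)$, apply the rotation to each, and then patch the resulting sequences $0\to\Omega Z_{i-1}\to Z_i\oplus P_i\to M_i\to 0$ together; the patching works because the connecting data match. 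Passing to $\smod\Lambda$ erases the $P_i$ and gives $0\to\Omega M_n\to\cdots\to\Omega M_0\to 0$.

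Finally, to get general $k\in\Z_{>0}$ I would simply iterate the $k=1$ statement $k$ times: the exactness of $0\to\Omega M_n\to\cdots\to\Omega M_0\to 0$ in $\smod\Lambda$ (equivalently, up to projective summands in $\mod\Lambda$) lets me apply the same construction again, and after $k$ applications I obtain $0\to\Omega^k M_n\to\cdots\to\Omega^k M_0\to 0$ in $\smod\Lambda$, which lifts to \eqref{eq:rotating_ses_long} with suitable projectives $P_1,\dots,P_n$ in $\mod\Lambda$. The main obstacle I anticipate is purely organizational rather than conceptual: making the "splice the short exact sequences, rotate each, re-splice" bookkeeping precise enough that the maps in the rotated long sequence genuinely compose to zero with the right images — i.e. verifying that the rotations of adjacent short exact sequences are compatible along their shared term. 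Once the $k=1$ long-exact case is nailed down, the induction on $k$ and the reduction to $\smod\Lambda$ are immediate.
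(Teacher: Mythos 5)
Your argument is correct and matches the paper's approach: lift the projective cover $\pi\colon P_C\to C$ through $g$ to a map $h\colon P_C\to B$, define $g'=(f,\pm h)\colon A\oplus P_C\to B$, check surjectivity, and identify the kernel with $\Omega C$; the paper does exactly this (constructing $\bar g,\bar f, f',g'$ explicitly and then leaving exactness as ``straightforward''), and likewise obtains the long version by iterating the short one. One small caveat: your parenthetical ``snake lemma applied to the map of short exact sequences $0\to\Omega C\to P_C\to C\to 0$ against $0\to A\to B\to C\to 0$'' is not quite how the argument runs, since the left vertical map $\Omega C\to A$ is precisely what is being constructed, not an input to the snake lemma; the pullback description you lead with, or the direct kernel computation $(a,p)\in\ker g'\Leftrightarrow\pi(p)=0$ with $a$ then uniquely determined, is the clean way to phrase it.
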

\begin{proof}
The proof of the general statement \eqref{eq:rotating_ses_long} can be obtained from repeatedly applying \eqref{eq:lem_rotating_ses}, so we only prove \eqref{eq:lem_rotating_ses}. {\color{black} Note that if any of $A, B, C$ is zero, then \eqref{eq:lem_rotating_ses} holds. If $C$ is projective (so that $\Omega C=0$), then \eqref{eq:lem_rotating_ses} holds since $0\to A\to B\to C\to 0$ splits.}

Now assume none of $A, B, C, \Omega C$ is zero. Consider the following commutative diagram.

\begin{center}
\begin{tikzcd}
0 \arrow[r] & A \arrow[ddrr, leftarrow, dashed, "\bar{f}"] \arrow[r,"f"] & B \arrow[dr, leftarrow, dashed, "\bar{g}"] \arrow[r, "g"] & C \arrow[r] \arrow[d, leftarrow, "\pi"] & 0 \\
{} & {} & {} & P_C \arrow[d, leftarrow, "i"] & {} \\
{} & {} & {} & \Omega C & {}
\end{tikzcd}
\end{center}

Since $g:B\to C$ is surjective and there is a surjection $\pi: P_C \to C$ from the projective cover of $C$ to $C$, we obtain a map $\bar{g}: P_C\to B$ such that $g\bar{g}=\pi$. We can define $g':A\oplus P_C\to B$ by 
\[
g'(a,p)=f(a)-\bar{g}(p).
\]
We also know that $\im\bar{g}i \subseteq \ker g=\im f$, so the map $\bar{f}(x)=f^{-1}(\bar{g}i(x))$ from $\Omega C$ to $A$ is well-defined. Naturally, define $f':\Omega C\to A\oplus P_C$ by 
\[
f'(x)=(\bar{f}(x),i(x)).
\]

Now it is straightforward to show that \eqref{eq:lem_rotating_ses} is exact.
\end{proof}

We continue with the definition of the sub-derived delooping level.

{\color{black}
\begin{definition}
\label{def:subddell}
Let $M$ be a $\Lambda$-module.
\begin{enumerate}
\item For $k\in\Z_{>0}$, the $k$-sub-derived delooping level of $M$ is
\begin{align*}
\ksubddell M = \inf\{m \mid & \, \exists n\leq k, \text{and an exact sequence in $\mod\Lambda$ of the form} \\
& 0\to M \to D_0 \to D_1 \to \cdots \to D_{n-1} \to D_n \to 0, \\
& \text{where $(k-i)$-$\dell D_i \leq i+m$, } i=0,1,\dots,n \},
\end{align*}
where we drop the $k$ when it is 1 and write $\subddell M$ instead of $1\text{-}\subddell M$.
We say $\ksubddell M$ \textbf{is equal to $m$ using $n$}.
\item For $k\in\Z_{>0}$, the $k$\text{-}sub-derived delooping level of $\Lambda$ is
\[
\ksubddell \Lambda = \sup \{\ksubddell S \mid S \text{ is simple} \},
\]
where we drop the $k$ when it is 1. Thus, the sub-derived delooping level of $\Lambda$ is defined to be the 1-sub-derived delooping level of $\Lambda$.
\end{enumerate}
\end{definition}

This definition is quite involved, so we make three relevant remarks. The first compares $\ksubddell M$ and $\kdell M$. The second simplifies the definition when $k=1$. The third shows $\ksubddell M$ increases as $k$ increases.

\begin{remark}
\label{rmk:subddell_vs_kdell}
For every module $M$, we can always take the short exact sequence $0\to M \to M\oplus \Lambda \to \Lambda\to 0$. This shows $\ksubddell M\leq \kdell M$.
\end{remark}

\begin{remark}
\label{rmk:subddell}
If $n\neq 0$, the definition of $\ksubddell M$ applied to $k=1$ is
\begin{align}
\label{eq:1subddell-a}
1\text{-}\subddell M = \inf\{m \mid & \, \text{there exists an exact sequence in $\mod\Lambda$ of the form} \\
& 0\to M \to D_0 \to D_1 \to 0, \text{ where $(1-i)$-$\dell D_i\leq i+m, i=0,1$} \nonumber\}.
\end{align}

In this particular case, the 0-delooping level imposes nothing on $D_1$ and we only need the injection $M\xhookrightarrow{} D_0$. The smallest $m$ that can be achieved is $\inf\{\dell D_0 \mid M\xhookrightarrow{} D_0\}$. 
So, we may define the $1$-$\subddell M$ more easily as
\begin{equation}
\label{eq:1subddell}
1\text{-}\subddell M = \inf\{\dell N \mid M \inc N\}.
\end{equation}

When we refer to $\subddell\!$ in the future, we will use this definition \eqref{eq:1subddell}.
\end{remark}

\begin{remark}
\label{rmk:ksubddell is increasing}
For $k>0$, if $(k+1)$-$\subddell M$ is $m$ using $n$ and the exact sequence
\begin{equation}
\label{eq:ksubddell is increasing}
0\to M\to D_0 \to \cdots \to D_n \to 0,
\end{equation}
where $(k+1-i)$-$\dell D_i\leq m+i$ for $i=0,1,\dots,n$, then we claim that $\ksubddell M\leq m$. There are two cases: $n<k+1$ and $n=k+1$.

If $n<k+1$, we can use the same exact sequence \eqref{eq:ksubddell is increasing} and have $(k-i)$-$\dell D_i \leq (k+1-i)$-$\dell D_i\leq m+i$ for $i=0,1,\dots,n$, so $\ksubddell M\leq m$.

If $n=k+1$, we simply truncate \eqref{eq:ksubddell is increasing} to have $0\to M \to D_0 \to \cdots \to D_{k-1} \to Coker\to 0$, where $(k-i)$-$\dell D_i \leq m+i$ still holds and we do not need any condition on the cokernel.
\end{remark}

The sub-derived delooping level is strictly better than the original delooping level in Example \ref{ex:monomial_example}.

\begin{example}[Example \ref{ex:monomial_example} revisited]
\label{ex:monomial example revisited subddell}
By observing the projective resolution of $S_2$, we know both $S_2$ and $\begin{matrix} 1 \\ 2 \end{matrix}$ are infinitely deloopable. Therefore, the embedding $S_1\xhookrightarrow{} \begin{matrix} 2 \\ 1 \end{matrix}$ implies $\subddell\Lambda=1<2=\dell\Lambda$.

For every $k>1$, we can use the same exact sequence $0\to 1\to \begin{matrix} 2 \\ 1 \end{matrix} \to 2 \to 0$ to show $\ksubddell\Lambda=1$ for all $k$.
\end{example}

The sub-derived delooping level also bounds the opposite Findim.

\begin{theorem}
\label{thm:subddell}
$\Findim\Lambda^{\op} \leq \subddell\Lambda \leq \dell\Lambda$.
\end{theorem}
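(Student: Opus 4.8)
\textbf{Proof plan for Theorem \ref{thm:subddell}.}

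The second inequality $\subddell\Lambda \leq \dell\Lambda$ is essentially formal: for any simple module $S$ the identity embedding $S \inc S$ together with definition \eqref{eq:1subddell} gives $\subddell S = \inf\{\dell N \mid S \inc N\} \leq \dell S$, and taking suprema over simple modules yields the claim. (This is the same principle as Remark \ref{rmk:subddell_vs_kdell}, specialized to $k=1$.) So the content is the first inequality $\Findim\Lambda^{\op} \leq \subddell\Lambda$, and the plan is to mimic the structure of the proof of $\Findim\Lambda^{\op}\leq\dell\Lambda$ recalled earlier in the paper, but routed through an embedding $S \inc N$ rather than using $S$ itself.

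First I would reduce to a statement about injective dimensions: by Proposition \ref{prop:edell_vs_findim} it suffices to show $\edell\Lambda \leq \subddell\Lambda$, i.e. $\edell S \leq \subddell S$ for every simple $S$; equivalently, if $\subddell\Lambda = n < \infty$ then no module $X$ has $\id X > n$. Assume for contradiction that $\id X = n + k' > n$ for some $k' \geq 1$. By Lemma \ref{lem:ext_simple} there is a simple module $S$ with $\Ext^{n+k'}_\Lambda(S,X)\neq 0$. Since $\subddell S \leq n$, there is a module $N$ with $S \inc N$ and $\dell N \leq n$, so $\Omega^n N \dirsum \Omega^{n+1} N'$ for some $N' \in \smod\Lambda$. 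The key step is to transfer the nonvanishing of $\Ext^{n+k'}_\Lambda(S,X)$ to $N$: from the short exact sequence $0 \to S \to N \to N/S \to 0$ we get the long exact sequence
\[
\Ext^{n+k'}_\Lambda(N,X) \to \Ext^{n+k'}_\Lambda(S,X) \to \Ext^{n+k'+1}_\Lambda(N/S,X),
\]
and since $\id X = n+k'$ the last term vanishes, so $\Ext^{n+k'}_\Lambda(N,X) \to \Ext^{n+k'}_\Lambda(S,X)$ is surjective, giving $\Ext^{n+k'}_\Lambda(N,X)\neq 0$. Now run the $\dell$-argument on $N$: by Lemma \ref{lem:ext_syz},
\[
\Ext^{n+k'}_\Lambda(N,X) \cong \Ext^{k'}_\Lambda(\Omega^n N, X) \dirsum \Ext^{k'}_\Lambda(\Omega^{n+1} N', X) \cong \Ext^{n+k'+1}_\Lambda(N',X) = 0,
\]
the last equality because $\id X = n+k' < n+k'+1$. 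This contradicts $\Ext^{n+k'}_\Lambda(N,X)\neq 0$, completing the argument.

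The step I expect to be the only real subtlety is the transfer of nonvanishing from $S$ to $N$ via the long exact sequence, and in particular making sure the vanishing $\Ext^{n+k'+1}_\Lambda(N/S,X) = 0$ is legitimate — this is exactly where $\id X = n+k'$ (finite, equal to $n+k'$) is used, so it is important that the contradiction hypothesis pins down $\id X$ to a specific value rather than merely $\id X > n$; one handles this by taking $n+k'$ to be precisely $\id X$, which is permissible. Everything else is bookkeeping with Lemmas \ref{lem:ext_simple} and \ref{lem:ext_syz} already established, plus the observation that $\Ext$ commutes with the direct-summand relation $\dirsum$ in both variables. Finally I would note that, just as in the recalled proof, projective summands may be ignored throughout since all computations are mod projectives and $\Ext^{\geq 1}$ kills projectives.
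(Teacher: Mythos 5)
Your proof is correct and essentially matches the paper's argument for the $k=1$ case: both apply $\Ext_\Lambda^\bullet(-,X)$ to the short exact sequence $0\to S\to N\to N/S\to 0$ arising from an embedding $S\inc N$ with $\dell N\leq n$, and both rely on Lemmas \ref{lem:ext_simple} and \ref{lem:ext_syz} plus the vanishing $\Ext_\Lambda^{n+k'+1}(N/S,X)=0$. The paper's Lemma \ref{lem:subddell_base_case} first rotates the sequence via Lemma \ref{lem:rotating_ses} and then applies $\Ext_\Lambda^{k'}(-,X)$ to deduce $\Ext_\Lambda^{n+k'}(S,X)=0$ directly, which by dimension shifting is literally the same three-term segment you use; your phrasing as a contradiction is a cosmetic repackaging, not a different route.
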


We prove the theorem using induction, following the next two lemmas. In fact, we will prove the more general statement
\[
\kedell\Lambda \leq \ksubddell\Lambda \leq \kdell\Lambda,
\]
where the theorem is the special case when $k=1$.
}

\begin{lemma}[$\subddell\!$ Base Case]
\label{lem:subddell_base_case}

If the map of modules $f:A\to B$ is injective and $\dell B\leq n$, then $\edell A\leq n$.
\end{lemma}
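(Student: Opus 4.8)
### Proof Plan for Lemma \ref{lem:subddell_base_case}

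The plan is to unwind the definition of $\edell A$ and reduce the required Ext-vanishing to a statement about $\dell B$. Recall that $\edell A \leq n$ means: for every module $X$ with $\id X = n+k' > n$ (where $k' \geq 1$), we have $\Ext_\Lambda^{n+k'}(A,X) = 0$. So fix such an $X$ and such a $k'$; the goal is $\Ext_\Lambda^{n+k'}(A,X) = 0$.

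First I would use the short exact sequence $0 \to A \xrightarrow{f} B \to C \to 0$, where $C = \coker f$, and apply $\Ext_\Lambda(-,X)$ to obtain the long exact sequence. The relevant segment is
\[
\Ext_\Lambda^{n+k'}(B,X) \to \Ext_\Lambda^{n+k'}(A,X) \to \Ext_\Lambda^{n+k'+1}(C,X).
\]
Since $\id X = n+k'$, the rightmost term $\Ext_\Lambda^{n+k'+1}(C,X)$ vanishes automatically (we are past the injective dimension of $X$). So it suffices to show $\Ext_\Lambda^{n+k'}(B,X) = 0$. Here is where $\dell B \leq n$ enters: there is a module $N \in \smod\Lambda$ with $\Omega^n B \dirsum \Omega^{n+1} N$ up to projectives. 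By Lemma \ref{lem:ext_syz},
\[
\Ext_\Lambda^{n+k'}(B,X) \cong \Ext_\Lambda^{k'}(\Omega^n B, X),
\]
and since $\Omega^n B$ is a direct summand of $\Omega^{n+1} N$, the group $\Ext_\Lambda^{k'}(\Omega^n B, X)$ is a direct summand of $\Ext_\Lambda^{k'}(\Omega^{n+1} N, X) \cong \Ext_\Lambda^{n+1+k'}(N,X)$. But $n+1+k' > n+k' = \id X$, so this last group is zero, hence so is its summand. Chaining these isomorphisms and the long exact sequence gives $\Ext_\Lambda^{n+k'}(A,X) = 0$, which is exactly $\edell A \leq n$.

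I expect the argument to be essentially routine once the definitions are in place; the only mild subtlety is the degenerate cases (e.g. $C$ projective or zero, or $\pd B$ small so that $\Omega^n B$ is projective or zero), which are all harmless since the relevant Ext groups vanish trivially. The main conceptual point — and the thing worth stating carefully — is that one does not need $\dell A$ to be finite at all: the injectivity of $f$ lets the cokernel term drop out for free because $X$ has injective dimension exactly $n+k'$, so the bound on $\edell A$ is inherited entirely from $B$. This is the base case that will feed into the inductive proof of $\kedell\Lambda \leq \ksubddell\Lambda$.
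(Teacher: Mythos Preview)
Your proposal is correct and follows essentially the same approach as the paper. The only cosmetic difference is that the paper first rotates the sequence via Lemma~\ref{lem:rotating_ses} to obtain $0\to \Omega^{n+1}\coker f \to \Omega^n A \to \Omega^n B \to 0$ and then applies $\Ext_\Lambda^{k'}(-,X)$, whereas you apply the long exact sequence of $\Ext_\Lambda(-,X)$ directly at degree $n+k'$; these are equivalent by Lemma~\ref{lem:ext_syz}, and your version is arguably slightly more direct since it bypasses the rotation lemma.
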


\begin{proof}
If $f$ is also surjective, then $A\cong B$, so the Lemma follows since $\edell A\leq \dell A$.

Now assume $f$ is not surjective so that $\coker f\neq 0$. Let $X$ be a module with $\id X=n+k'>n$. Then we can rotate the short exact sequence $0\to A\to B\to \coker f\to 0$ by Lemma \ref{lem:rotating_ses} to get another short exact sequence
\[
0 \to \Omega^{n+1} \coker f \to \Omega^{n} A \to \Omega^{n} B \to 0,
\]

Apply $\Ext^{k'}_{\Lambda}(-,X)$ to obtain the following exact sequence
\[
\Ext_{\Lambda}^{k'}(\Omega^{n+1}\coker f,X) \xleftarrow{} \Ext_{\Lambda}^{k'}(\Omega^{n}A,X) \xleftarrow{} \Ext_{\Lambda}^{k'}(\Omega^{n}B,X).
\]

The first term is isomorphic to $\Ext_{\Lambda}^{n+k'+1}(\coker f,X)=0$. Since $\dell B\leq n$, there exists some module $N_B$ such that the last term is a direct summand of $\Ext_{\Lambda}^{k'}(\Omega^{n+1} N_B, X)\cong \Ext_{\Lambda}^{n+k'+1}(N_B, X)=0$. Therefore, $\Ext_{\Lambda}^{k'}(\Omega^{n}A,X)\cong \Ext_{\Lambda}^{n+k'}(A,X)=0$.
\end{proof}

{\color{black}
\begin{lemma}[$\subddell\!$ Inductive Step]
\label{lem:subddell_inductive_case}
If $0\to A\to B\to C\to 0$ is a short exact sequence in $\mod\Lambda$ and for $k>1$,
\begin{itemize}
\item $\kdell B\leq n$
\item $(k-1)$-$\edell C\leq n+1$,
\end{itemize}
then $\kedell A \leq n$.
\end{lemma}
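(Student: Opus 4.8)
The plan is to mimic the proof of Lemma~\ref{lem:subddell_base_case}, but now exploit the $k$-delooping hypothesis on $B$ together with the weaker $(k-1)$-effective delooping hypothesis on $C$. First I would dispose of the degenerate cases: if $A=0$ there is nothing to prove, and if $C=0$ then $A\cong B$ and $\kedell A\leq\kdell A=\kdell B\leq n$ by Lemma~\ref{lem:kedell_vs_kdell}. So assume $0\to A\to B\to C\to 0$ with all three terms nonzero, and let $X$ be any module with $\id X=n+k-1+k'>n+k-1$; the goal is $\Ext_{\Lambda}^{n+k'}(A,X)=0$.

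Next I would rotate the short exact sequence $k$ times using Lemma~\ref{lem:rotating_ses}, obtaining (mod projectives) an exact sequence
\[
0\to \Omega^{n+k}C \to \Omega^{n}A \to \Omega^{n}B \to 0.
\]
Applying $\Ext_{\Lambda}^{k'}(-,X)$ gives the exact strand
\[
\Ext_{\Lambda}^{k'}(\Omega^{n+k}C,X)\leftarrow \Ext_{\Lambda}^{k'}(\Omega^{n}A,X)\leftarrow \Ext_{\Lambda}^{k'}(\Omega^{n}B,X),
\]
so it suffices to kill the two outer terms. For the right-hand term, $\kdell B\leq n$ gives $N_B$ with $\Omega^n B\dirsum \Omega^{n+k}N_B$, hence by Lemma~\ref{lem:ext_syz} it is a summand of $\Ext_{\Lambda}^{k'}(\Omega^{n+k}N_B,X)\cong\Ext_{\Lambda}^{n+k+k'}(N_B,X)=0$, using $\id X = n+k-1+k' < n+k+k'$. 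For the left-hand term, Lemma~\ref{lem:ext_syz} gives $\Ext_{\Lambda}^{k'}(\Omega^{n+k}C,X)\cong\Ext_{\Lambda}^{n+k+k'}(C,X)$. Now I would rewrite this as $\Ext_{\Lambda}^{(n+1)+(k-1)+(k'+1)}(C,X)$ and invoke the hypothesis $(k-1)$-$\edell C\leq n+1$: since $\id X=(n+1)+(k-1)-1+(k'+1)>(n+1)+(k-1)-1$, the definition of $(k-1)$-effective delooping level yields $\Ext_{\Lambda}^{(n+1)+(k'+1)}(C,X)=0$, which is exactly $\Ext_{\Lambda}^{n+k+k'}(C,X)=0$. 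Therefore $\Ext_{\Lambda}^{k'}(\Omega^n A,X)=0$, i.e.\ $\Ext_{\Lambda}^{n+k'}(A,X)\cong 0$ by Lemma~\ref{lem:ext_syz}, proving $\kedell A\leq n$.

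The main obstacle I anticipate is bookkeeping with the index shifts in the definition of $\kedell$: one has to be careful that the exponent $n+k+k'$ on $\Ext(C,X)$ is matched against the $(k-1)$-effective delooping threshold with the correct "$k$" and "$k'$" parameters, namely $k\rightsquigarrow k-1$ and $k'\rightsquigarrow k'+1$, and that the injective-dimension inequality $\id X>(n+1)+(k-1)-1 = n+k-1$ is precisely the hypothesis we were handed. A secondary subtlety is making sure the rotation in Lemma~\ref{lem:rotating_ses} is applied to the exact sequence the right number of times and that, working in $\smod\Lambda$, dropping projective summands does not affect the $\Ext^{k'}$ computations (it does not, since $k'>0$). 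Once these indices are pinned down, the argument is a direct generalization of the base case.
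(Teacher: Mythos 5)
Your overall plan — rotate the short exact sequence and hit it with $\Ext^{k'}(-,X)$ — is the right one, and your handling of the right-hand term $\Ext^{k'}(\Omega^n B,X)$ via $\kdell B\leq n$ is correct. But the rotated sequence you wrote down is not exact. Lemma~\ref{lem:rotating_ses} applied once to $0\to A\to B\to C\to 0$ gives (in $\smod\Lambda$) $0\to \Omega C\to A\to B\to 0$, and then applying the syzygy-power form of the lemma gives $0\to \Omega^{n+1}C\to\Omega^n A\to\Omega^n B\to 0$. The three slots always advance in lockstep under rotation: you get sequences of the shape $(\Omega^{m+1}C,\Omega^m A,\Omega^m B)$, $(\Omega^{m+1}B,\Omega^{m+1}C,\Omega^m A)$, or $(\Omega^m A,\Omega^m B,\Omega^m C)$. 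There is no way to produce $\Omega^{n+k}C$ sitting next to $\Omega^n A$ and $\Omega^n B$ when $k>1$, so your displayed sequence is false in exactly the range where the lemma is stated.

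Your index bookkeeping has a compensating slip that hides the problem: you identify $\Ext^{n+k+k'}(C,X)$ with $\Ext^{(n+1)+(k-1)+(k'+1)}(C,X)$, but $(n+1)+(k-1)+(k'+1)=n+k+k'+1$, and moreover the injective-dimension threshold $(n+1)+(k-1)-1+(k'+1)=n+k+k'$ does not match the $X$ you fixed, which has $\id X=n+k-1+k'$. Once you use the correct sequence $0\to\Omega^{n+1}C\to\Omega^n A\to\Omega^n B\to 0$, the argument closes with no shifting at all: the left-hand term is $\Ext^{k'}(\Omega^{n+1}C,X)\cong\Ext^{n+1+k'}(C,X)$, and this vanishes immediately because $(k-1)$-$\edell C\leq n+1$ and $\id X=(n+1)+(k-1)-1+k'>(n+1)+(k-1)-1$. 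The point to internalize is that the ``$k$'' in $\kdell$ and $\kedell$ lives in the injective-dimension threshold, not in the syzygy exponent of the rotated sequence; the cosyzygy-style extra shift on $C$ is always exactly one.
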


\begin{proof}
By applying Lemma \ref{lem:rotating_ses} repeatedly, we get the short exact sequence
\begin{equation}
\label{eq:SES in subddell proof}
0\to \Omega^{n+1} C \to \Omega^n A \to \Omega^n B \to 0.
\end{equation}

To show $\kedell A\leq n$, take any module $X$ such that $\id X = n+k-1+k'>n+k-1$. Apply $\Ext_{\Lambda}^{k'}(-,X)$ to \eqref{eq:SES in subddell proof} to get another exact sequence
\begin{equation}
\label{eq:Ext SES in subddell proof}
\Ext_{\Lambda}^{k'}(\Omega^{n+1}C,X) \xleftarrow{} \Ext_{\Lambda}^{k'}(\Omega^n A,X) \xleftarrow{}  \Ext_{\Lambda}^{k'}(\Omega^n B,X).
\end{equation}

Since $\kdell B\leq n$, there exists a module $N_B$ such that $\Omega^n B\dirsum \Omega^{n+k} N_B$, so the rightmost term $\Ext_{\Lambda}^{k'}(\Omega^n B,X)$ of \eqref{eq:Ext SES in subddell proof} is a direct summand of $\Ext_{\Lambda}^{k'}(\Omega^{n+k} N_B, X) \cong \Ext_{\Lambda}^{n+k+k'}(N_B, X)=0$.

Since $(k-1)$-$\edell C\leq n+1$, for the $X$ we chose with $\id X = n+k-1+k'>n+k-1$, we have $\Ext_{\Lambda}^{n+1+k'}(C, X)=0$, but it is also isomorphic to $\Ext_{\Lambda}^{k'}(\Omega^{n+1}C,X)$, the leftmost term of \eqref{eq:Ext SES in subddell proof}. Therefore, $\Ext_{\Lambda}^{n+k'}(A, X)\cong \Ext_{\Lambda}^{k'}(\Omega^n A,X)=0$, so $\kedell A \leq n$.
\end{proof}

}

{\color{black}

\begin{proof}[Proof of Theorem \ref{thm:subddell}] Suppose $\ksubddell\Lambda=m<\infty$. Then there is a simple module $S$ satisfying $\ksubddell S = m$ using $n$. We will show $\kedell S\leq m$. Consider the exact sequence used to determine $\ksubddell S$:
\begin{equation}
0 \to S \overset{f_0}{\to} D_0 \overset{f_1}{\to} D_1 \to \cdots \to D_{n-1} \overset{f_n}{\to} D_n \to 0,
\end{equation}
where $(k-i)$-$\dell D_i \leq i+m$, $i=0,1,\dots,n$.

If $n<k$, we only need Lemma \ref{lem:subddell_inductive_case}. Starting with the short exact sequence
\[
0\to \coker f_{n-2} \to D_{n-1} \to D_n \to 0\]
with 
\begin{itemize}
\item $(k-n+1)$-$\dell D_{n-1} \leq m+n-1$
\item $(k-n)$-$\edell D_n \leq (k-n)$-$\dell D_n\leq m+n$
\end{itemize}
we apply Lemma \ref{lem:subddell_inductive_case} to obtain $(k-n+1)$-$\edell \coker f_{n-2} \leq m+n-1$.

Proceed inductively until the short exact sequence $0 \to \coker f_0 \to D_1 \to \coker f_1\to 0$ to obtain $(k-1)$-$\edell \coker f_0\leq m+1$. Lastly, apply Lemma \ref{lem:subddell_inductive_case} again to $0\to S\to D_0 \to \coker f_0 \to 0$ to conclude $\kedell S\leq m$.

If $n=k$, then there is no condition on $D_n$ and $\dell D_{n-1}\leq m+n-1$. So we can apply Lemma \ref{lem:subddell_base_case} to $\coker f_{n-2} \xhookrightarrow{} D_{n-1}$ to obtain $\edell \coker f_{n-2}\leq m+n-1$. Similar to the case of $n<k$, we apply Lemma \ref{lem:subddell_inductive_case} $(k-1)$ more times to conclude $\kedell S\leq m$.

The same argument works for all simple modules $S$, so $\kedell \Lambda\leq \ksubddell\Lambda$ and the theorem is the special case when $k=1$. \end{proof}

It is natural to consider the definition and theorem dual to those of the sub-derived delooping level. Hence, we introduce the derived delooping level, the important definition of the paper.
}

{\color{black}
\begin{definition}
\label{def:ddell}
Let $M$ be a $\Lambda$-module.
\begin{enumerate}
\item The \textbf{$k$-derived delooping level} of $M$ is
\begin{align*}
\kddell M = \inf \{m\in\N \mid & \,\exists n\leq m \text{ and an exact sequence in $\mod\Lambda$ of the form} \\
& \,\, 0 \to C_n \to C_{n-1} \to \cdots \to C_1 \to C_0 \to M \to 0, \\
& \text{ where $(i+k)$-$\dell C_i\leq m-i$, } i=0,1,\dots,n  \},
\end{align*}
where we drop the $k$ when it is 1 and write $\ddell M$ instead of $1\text{-}\ddell M$. We say $\kddell M$ is \textbf{equal to $m$ using $n$}.
\item The \textbf{$k$-derived delooping level} of $\Lambda$ is
\[
\kddell \Lambda = \sup \{\kddell S\mid S \text{ is simple} \}
\]
\end{enumerate}
We drop the $k$ when it is 1. Thus, $\ddell M:=1\text-\ddell M$ and the derived delooping level of $\Lambda$ is defined to be the $1$-derived delooping level of $\Lambda$.
\end{definition}

In order to better understand this new definition and its connection to other invariants, we make three remarks like when we introduced $\subddell\!$.

\begin{remark}
\label{rmk:ddell_less_than_dell}
Again, we immediately see that $\kddell \Lambda \leq \kdell \Lambda$ from the definitions. If $\kdell S=n<\infty$ for some simple module $S$, then we can truncate the projective resolution of $S$ at $P_n$ and form the exact sequence
\[
0\to \Omega^n S \to P_{n-1} \to \cdots \to P_1 \to P_0 \to S \to 0.
\]
We see immediately that $\kddell S\leq n$.
\end{remark}
}

\begin{remark}
\label{rmk:finite_k+1_ddell_implies_finite_k_ddell}
In general, for any $k>0$ and any module $M$, if $(k+1)$-$\ddell M=m$ using $n$ is finite, then $\kddell M\leq m$. This is because the exact sequence
\[
0\to C_n \to \cdots \to C_0 \to M \to 0
\]
used to determine $(k+1)$-$\ddell M$ has $(i+k+1)$-$\dell C_i \leq m-i$ for $i=0,1,\dots,n$. This means $(i+k)$-$\dell C_i\leq m-i$ for $i=0,1,\dots,n$, so $\kddell M$ is at most $m$.
\end{remark}

\begin{remark}
\label{rmk:ddell_subddell_asymmetry}
The asymmetry in the definitions of the derived and sub-derived delooping levels is necessary. It might make sense to define the sub-derived delooping level as
\begin{align*}
\subddell M = \inf\{n\in\N & \mid M \text{ fits in an exact sequence in $\mod\Lambda$ of the form} \\
& \,\, 0 \to M \to D_0 \to D_1 \to \cdots \to D_{n-1} \to D_n \to 0, \\
& \text{ where $(i+1)$-$\dell D_i\leq n-i$} \}.
\end{align*}
However, we saw in Lemma \ref{lem:subddell_base_case} that the exact sequence $0\to M\to D_0$ is already sufficient in showing $\Findim\Lambda^{\op}\leq \subddell\Lambda$. Similarly, the derived delooping level cannot be defined concisely. That is, it is insufficient to define $\ddell M = \inf\{ \dell N \mid N \onto M\}$ and $\ddell\Lambda = \sup \{\ddell S\mid S \text{ is simple} \}$ and show $\Findim\Lambda^{\op}\leq \ddell\Lambda$. Otherwise, we could just take $N$ to be the projective cover and the delooping level of any projective is zero.
\end{remark}

The derived delooping level is also strictly better than the delooping level in Example \ref{ex:monomial_example}.

\begin{example}[Example \ref{ex:monomial_example} revisited]
\label{ex:monomial example revisited ddell}
We easily find that $\ddell S_1 = 1$ due to the short exact sequence $0\to S_2 \to \begin{matrix} 1 \\ 2 \end{matrix} \to S_1 \to 0$ and both $S_2$ and $\begin{matrix} 1 \\ 2 \end{matrix}$ being infinitely deloopable, which was pointed out in Example \ref{ex:monomial example revisited subddell}. For other values of $k$, the same short exact sequence shows $\kddell\Lambda=1$ using 1 for all $k$.

Therefore, for the monomial algebra $\Lambda$ defined in Example \ref{ex:monomial_example},
\[
\subddell\Lambda=\ddell\Lambda=\Findim\Lambda^{\op}=1<\dell\Lambda=2.
\]
\end{example}

A main feature of $\subddell\!$ and $\ddell\!$ is that they are usually easier to compute, as we are allowed to explore all modules that have a map to or from simple modules and more. In fact, we conjecture that $\ddell\Lambda = \Findim\Lambda^{\op}$ for all $\Lambda$. The statement $\subddell\Lambda=\Findim\Lambda^{\op}$ is not true, as we will show the example from \cite{kershaw2023} is a counterexample in Section \ref{sec:trivial_extensions}.

To finish the discussion of our new invariants, we prove $\ddell\Lambda$ is another upper bound for $\Findim\Lambda^{\op}$.

\begin{theorem}
\label{thm:ddell}
$\Findim\Lambda^{\op} \leq \ddell\Lambda \leq \dell\Lambda$.
\end{theorem}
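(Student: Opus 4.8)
The plan is to prove the sharper statement $\kedell\Lambda \le \kddell\Lambda \le \kdell\Lambda$ for every integer $k>0$ and then extract Theorem \ref{thm:ddell} as the case $k=1$, using $\Findim\Lambda^{\op}=\edell\Lambda$ from Proposition \ref{prop:edell_vs_findim}. The inequality $\kddell\Lambda\le\kdell\Lambda$ is already recorded in Remark \ref{rmk:ddell_less_than_dell}: if $\kdell S=n<\infty$, truncating the projective resolution of $S$ at $\Omega^nS$ produces an exact sequence of the shape demanded by Definition \ref{def:ddell} with all the $C_i$ projective, whence $\kddell S\le n$.

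For $\kedell\Lambda\le\kddell\Lambda$ I would imitate the proof of Theorem \ref{thm:subddell}, dualizing Lemmas \ref{lem:subddell_base_case} and \ref{lem:subddell_inductive_case}. Suppose $\kddell\Lambda=m<\infty$ and choose a simple module $S$ with $\kddell S=m$ using some $n\le m$, together with an exact sequence
\[
0\to C_n\to C_{n-1}\to\cdots\to C_1\to C_0\to S\to 0, \qquad (i+k)\text{-}\dell C_i\le m-i .
\]
Cut it into short exact sequences $0\to L_{j+1}\to C_j\to L_j\to 0$ with $L_0=S$, $L_n=C_n$ and $L_{j+1}=\im(C_{j+1}\to C_j)$. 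By Lemma \ref{lem:rotating_ses}, each of these becomes, mod projectives, $0\to\Omega^tL_j\to\Omega^tL_{j+1}\to\Omega^tC_j\to 0$ for every $t$; applying $\Ext_{\Lambda}^{k'}(-,X)$ and Lemma \ref{lem:ext_syz} then converts a delooping bound on $C_j$ together with an effective-delooping bound on $L_{j+1}$ (obtained from the previous short exact sequence, or, at the deepest step, from $(n+k)\text{-}\dell C_n\le m-n$ via Lemma \ref{lem:kedell_vs_kdell}) into an effective-delooping bound on $L_j$. Peeling from the innermost short exact sequence $0\to C_n\to C_{n-1}\to L_{n-1}\to 0$ outward to $0\to L_1\to C_0\to S\to 0$ yields $\kedell S\le m$, and since $S$ was an arbitrary simple module this gives $\kedell\Lambda\le\kddell\Lambda$. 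As in the $\subddell$ argument one splits off the degenerate case: when $n=0$ we have $C_0\cong S$ and the claim reduces directly to Lemma \ref{lem:kedell_vs_kdell}.

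I expect the bookkeeping of indices to be the real obstacle. In Definition \ref{def:ddell} the delooping order $i+k$ \emph{increases} while the bound $m-i$ \emph{decreases} as $i$ grows toward $C_n$ — the reverse of what happens in Definition \ref{def:subddell} — so one must track with care how the order and bound attached to $L_j$ evolve along the chain and check, at each short exact sequence, that the relevant $\Ext$ groups vanish precisely for those $X$ with $\id X=m+k-1+k'>m+k-1$ occurring in the definition of $\kedell$. Stating the dual base-case and inductive-step lemmas so that these constraints telescope cleanly is the crux; once they are in place, the induction is the same routine chase of long exact sequences used for Theorem \ref{thm:subddell}.
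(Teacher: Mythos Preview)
Your proposal is correct and essentially identical to the paper's proof, which also establishes $\kedell\Lambda\le\kddell\Lambda$ by splitting the defining exact sequence into short exact sequences and running an induction from $C_n$ outward via dual versions of Lemmas \ref{lem:subddell_base_case} and \ref{lem:subddell_inductive_case} (stated in the paper as Lemmas \ref{lem:v1_induction} and \ref{lem:v2_induction}). The only slip is in your displayed rotated sequence: the exponents on $\Omega$ should differ by one (the paper uses $0\to\Omega^n C_j\to\Omega^n L_j\to\Omega^{n-1}L_{j+1}\to 0$), but you already flagged this index bookkeeping as the part to be sorted out.
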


Like the case for $\subddell\!$, we prove the more general statement
\[
\kedell\Lambda \leq \kddell\Lambda \leq \kdell\Lambda,
\]
so that the theorem is the special case when $k=1$.

\begin{lemma}[$\ddell\!$ Base Case]
\label{lem:v1_induction}
If $0\to A\to B\to C\to 0$ is a short exact sequence in $\mod\Lambda$ and for $k>1$,
\begin{itemize}
\item $\kdell A\leq n-1$
\item $(k-1)$-$\dell B\leq n$,
\end{itemize}
then $(k-1)$-$\edell C\leq n$.
\end{lemma}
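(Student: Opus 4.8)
The plan is to mirror the structure of the $\subddell$ base case and inductive step (Lemmas \ref{lem:subddell_base_case} and \ref{lem:subddell_inductive_case}), but now running the rotation of the short exact sequence in the ``opposite direction,'' so that the syzygy of $C$ appears on the outside rather than $C$ itself. Concretely, applying Lemma \ref{lem:rotating_ses} to $0\to A\to B\to C\to 0$ gives, in $\smod\Lambda$, the short exact sequence
\[
0\to \Omega^{n} A \to \Omega^{n} B \to \Omega^{n} C \to 0
\]
(up to projective summands, which we may suppress as usual). The goal $(k-1)$-$\edell C\le n$ unwinds, by Definition \ref{def:edell} together with Lemma \ref{lem:ext_syz}, to the following: for every module $X$ with $\id X = n + (k-1) - 1 + k' > n+k-2$, we must show $\Ext^{k'}_\Lambda(\Omega^n C, X) = 0$.

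First I would fix such an $X$ and apply $\Ext^{k'}_\Lambda(-,X)$ to the rotated sequence above, obtaining the exact piece
\[
\Ext^{k'}_\Lambda(\Omega^n B, X) \to \Ext^{k'}_\Lambda(\Omega^n A, X) \to \Ext^{k'+1}_\Lambda(\Omega^n C, X) \to \Ext^{k'+1}_\Lambda(\Omega^n B, X).
\]
Wait --- I need $\Ext^{k'}_\Lambda(\Omega^n C,X)$, not $\Ext^{k'+1}$, so instead I would use the portion
\[
\Ext^{k'-1}_\Lambda(\Omega^n A, X) \to \Ext^{k'}_\Lambda(\Omega^n C, X) \to \Ext^{k'}_\Lambda(\Omega^n B, X),
\]
and arrange the bookkeeping so this is the relevant stretch (shifting indices via Lemma \ref{lem:ext_syz} as needed; the precise normalization is where care is required). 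The right-hand term vanishes because $(k-1)$-$\dell B\le n$: there is a module $N_B$ with $\Omega^n B \dirsum \Omega^{n+k-1} N_B$, so $\Ext^{k'}_\Lambda(\Omega^n B, X)$ is a summand of $\Ext^{k'}_\Lambda(\Omega^{n+k-1}N_B, X)\cong \Ext^{n+k-1+k'}_\Lambda(N_B, X) = 0$ since $\id X = n+k-2+k' < n+k-1+k'$. The left-hand term vanishes because $\kdell A \le n-1$: there is a module $N_A$ with $\Omega^{n-1}A \dirsum \Omega^{n-1+k}N_A$, whence $\Ext^{k'-1}_\Lambda(\Omega^n A, X)\cong \Ext^{n-1+k'}_\Lambda(\Omega^{n-1}A, X)$ is a summand of $\Ext^{n-1+k'}_\Lambda(\Omega^{n-1+k}N_A, X)\cong \Ext^{2n-2+k+k'}_\Lambda$-type term... again I would recompute the exact index, but the point is it equals $\Ext^{m}_\Lambda(N_A, X)$ for some $m > \id X$, hence is zero. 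Exactness then forces the middle term $\Ext^{k'}_\Lambda(\Omega^n C, X) = 0$, which is what we wanted.

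The main obstacle is purely the index bookkeeping: one must verify that, after rotating and applying $\Ext$, the two flanking Ext-groups really do land in cohomological degrees strictly exceeding $\id X$, using the two hypotheses $\kdell A \le n-1$ and $(k-1)$-$\dell B\le n$ at exactly the right slots — and that the degree shift built into the definition of $(k-1)$-$\edell$ (namely the ``$+(k-1)-1$'' offset) matches up. I would handle the degenerate cases separately: if $A=0$ then $B\cong C$ and the claim is immediate from $(k-1)$-$\edell C\le (k-1)$-$\dell C = (k-1)$-$\dell B\le n$; if $C$ is projective then $\Omega^n C = 0$ (mod projectives) and the statement is vacuous; if the sequence splits, likewise one reduces to the summand case. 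Once those are set aside, the argument above applies verbatim in the generic case.
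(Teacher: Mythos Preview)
Your overall strategy (rotate, apply $\Ext^{k'}(-,X)$, and squeeze $\Ext^{k'}(\Omega^n C,X)$ between two vanishing terms) is exactly the paper's, but your choice of rotated sequence creates a genuine gap that is \emph{not} just bookkeeping. From $0\to \Omega^n A\to \Omega^n B\to \Omega^n C\to 0$ the portion of the long exact sequence flanking $\Ext^{k'}(\Omega^n C,X)$ is
\[
\Ext^{k'-1}_\Lambda(\Omega^n A,X)\;\to\;\Ext^{k'}_\Lambda(\Omega^n C,X)\;\to\;\Ext^{k'}_\Lambda(\Omega^n B,X),
\]
and the left-hand term sits in degree $k'-1$. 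When $k'=1$ (which is allowed in Definition~\ref{def:edell}) this is $\Hom_\Lambda(\Omega^n A,X)$, and there is no reason for it to vanish: Lemma~\ref{lem:ext_syz} only applies in positive degree, the delooping hypothesis on $A$ gives no control over $\Hom$, and the projective summands you suppressed reappear here since $\Hom(P,X)\neq 0$ in general. So the argument as written fails at $k'=1$. (Your displayed identification $\Ext^{k'-1}(\Omega^n A,X)\cong \Ext^{n-1+k'}(\Omega^{n-1}A,X)$ is also mis-indexed---it should be $\Ext^{k'}(\Omega^{n-1}A,X)$---but that is secondary.)

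The paper avoids this by rotating \emph{twice} before taking syzygies, obtaining
\[
0\to \Omega^n B \to \Omega^n C \to \Omega^{n-1}A \to 0,
\]
so that $\Omega^n C$ sits in the \emph{middle} of the short exact sequence. Then $\Ext^{k'}(\Omega^n C,X)$ is flanked by $\Ext^{k'}(\Omega^{n-1}A,X)$ and $\Ext^{k'}(\Omega^n B,X)$, both in degree $k'\ge 1$, and both vanish directly from the hypotheses $\kdell A\le n-1$ and $(k-1)\text{-}\dell B\le n$ via Lemma~\ref{lem:ext_syz}. This extra rotation is precisely what makes the argument go through uniformly in $k'$, and it is the one non-obvious step you are missing.
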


\begin{lemma}[$\ddell\!$ Inductive Step]
\label{lem:v2_induction}
If $0\to A\to B\to C\to 0$ is a short exact sequence in $\mod\Lambda$ and for $k>1$,
\begin{itemize}
\item $\kedell A\leq n-1$
\item $(k-1)$-$\dell B\leq n$,
\end{itemize}
then $(k-1)$-$\edell C\leq n$.
\end{lemma}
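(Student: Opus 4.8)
The plan is to unwind the conclusion into a concrete $\Ext$-vanishing statement and then read it off a single long exact sequence. By Definition \ref{def:edell}, $(k-1)\text{-}\edell C\le n$ says exactly that for every positive integer $k'$ and every module $X$ with $\id X=n+k-2+k'$ one has $\Ext_{\Lambda}^{n+k'}(C,X)=0$. First I would note that we may assume $n\ge 1$, since $\kedell A$ takes values in $\N\cup\{\infty\}$ and so the hypothesis $\kedell A\le n-1$ already forces $n\ge 1$; when $n=0$ the statement is vacuous.

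Fix such $X$ and $k'$, and apply $\Ext_{\Lambda}^{\bullet}(-,X)$ to $0\to A\to B\to C\to 0$ to obtain the exact segment
\[
\Ext_{\Lambda}^{n+k'-1}(A,X)\to \Ext_{\Lambda}^{n+k'}(C,X)\to \Ext_{\Lambda}^{n+k'}(B,X),
\]
so that it suffices to kill the two outer terms. For the right-hand term I would use $(k-1)\text{-}\dell B\le n$: there is a module $N_B$ with $\Omega^n B\dirsum\Omega^{n+k-1}N_B$, whence by Lemma \ref{lem:ext_syz} the group $\Ext_{\Lambda}^{n+k'}(B,X)\cong\Ext_{\Lambda}^{k'}(\Omega^n B,X)$ is a direct summand of $\Ext_{\Lambda}^{k'}(\Omega^{n+k-1}N_B,X)\cong\Ext_{\Lambda}^{n+k-1+k'}(N_B,X)$, which vanishes because $n+k-1+k'=\id X+1>\id X$. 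For the left-hand term I would invoke the definition of $\kedell A$ at the integer $n-1$: since $\id X=(n-1)+(k-1)+k'$ with $k'>0$, the hypothesis $\kedell A\le n-1$ gives $\Ext_{\Lambda}^{(n-1)+k'}(A,X)=\Ext_{\Lambda}^{n+k'-1}(A,X)=0$. Exactness of the displayed segment then forces $\Ext_{\Lambda}^{n+k'}(C,X)=0$, which is precisely the desired bound.

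I expect the one real obstacle to be the index bookkeeping: the shifted superscripts coming from the definition of $\kedell A$, from $(k-1)\text{-}\dell B$, and from the connecting maps of the long exact sequence must all be aligned so that, simultaneously, $n+k-1+k'=\id X+1$ exceeds $\id X$ (forcing $\Ext_{\Lambda}^{n+k-1+k'}(N_B,X)=0$) and $\id X=(n-1)+(k-1)+k'$ matches exactly the hypothesis $\kedell A\le n-1$ after the level is lowered by one. The degenerate cases ($A$, $B$, or $C$ zero, or $\Omega^n B$ already zero) need no separate treatment, since then one of the relevant $\Ext$ groups vanishes outright. Note finally that, unlike the companion lemmas for the sub-derived delooping level, no use of Lemma \ref{lem:rotating_ses} is needed here: the group $\Ext_{\Lambda}^{n+k'}(C,X)$ that must be annihilated already occupies the middle slot of the long exact sequence of the original short exact sequence, so the argument is a slightly simpler analogue of the proof of Lemma \ref{lem:subddell_inductive_case}.
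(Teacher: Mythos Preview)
Your proof is correct. The paper does not give a standalone argument for this lemma; it asserts that Lemma~\ref{lem:v1_induction} implies Lemma~\ref{lem:v2_induction} via Lemma~\ref{lem:kedell_vs_kdell} and only proves Lemma~\ref{lem:v1_induction}. (As written that implication is actually backwards: since $\kedell A\le\kdell A$, the hypothesis $\kedell A\le n-1$ is \emph{weaker} than $\kdell A\le n-1$, so Lemma~\ref{lem:v2_induction} is the stronger statement and its proof yields Lemma~\ref{lem:v1_induction}, not conversely.) Your argument is exactly the natural direct proof: it is the paper's proof of Lemma~\ref{lem:v1_induction} with the single use of $\kdell A\le n-1$ replaced by a direct appeal to the definition of $\kedell A\le n-1$.

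The only difference in presentation is that the paper first rotates the short exact sequence via Lemma~\ref{lem:rotating_ses} to
\[
0\to \Omega^n B\to \Omega^n C\to \Omega^{n-1} A\to 0
\]
and then applies $\Ext_\Lambda^{k'}(-,X)$, whereas you read the same three $\Ext$ groups directly off the long exact sequence of the original short exact sequence. Your route is slightly cleaner and, as you note, shows that Lemma~\ref{lem:rotating_ses} is not needed here.
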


Note that Lemma \ref{lem:v1_induction} implies Lemma \ref{lem:v2_induction} due to Lemma \ref{lem:kedell_vs_kdell}, so we only prove Lemma \ref{lem:v1_induction}. 

\begin{proof}[Proof of Lemma \ref{lem:v1_induction}]
Applying Lemma \ref{lem:rotating_ses} repeatedly, we get the short exact sequence
\begin{equation}
\label{eq:rotated_ses}
0\to \Omega^n B \to \Omega^n C \to \Omega^{n-1} A \to 0.
\end{equation}

To show $(k-1)$-$\edell C\leq n$, we take any module $X$ with $\id X=n+k-2+k'>n+k-2$ and apply the contravariant functor $\Ext_{\Lambda}^{k'}(-,X)$ to \eqref{eq:rotated_ses} to get the exact sequence
\begin{equation}
\label{eq:Ext_sequence}
\Ext_{\Lambda}^{k'}(\Omega^n B,X) \leftarrow \Ext_{\Lambda}^{k'}(\Omega^n C,X) \leftarrow \Ext_{\Lambda}^{k'}(\Omega^{n-1} A,X).
\end{equation}

Since $(k-1)$-$\dell B\leq n$, $\Omega^n B\dirsum \Omega^{n+k-1}N_B$ for some module $N_B$, so 
\[
\Ext_{\Lambda}^{k'}(\Omega^n B,X) \dirsum \Ext_{\Lambda}^{k'}(\Omega^{n+k-1}N_B,X) \cong \Ext_{\Lambda}^{n+k-1+k'}(N_B,X)=0.
\]

Since $\kdell A\leq n-1$, $\Omega^{n-1} A\dirsum \Omega^{n+k-1}N_A$ for some module $N_A$, so
\[
\Ext_{\Lambda}^{k'}(\Omega^{n-1} A,X)\dirsum \Ext_{\Lambda}^{k'}(\Omega^{n+k-1} N_A,X) \cong \Ext_{\Lambda}^{n+k-1+k'}(N_A,X)=0.
\]

This makes the first and last term of the exact sequence \eqref{eq:Ext_sequence} both 0, so $\Ext_{\Lambda}^{k'}(\Omega^n C,X)\cong \Ext_{\Lambda}^{n+k'}(C,X)=0$. This shows the $(k-1)$-effective delooping level of $C$ is at most $n$.
\end{proof}

{\color{black}
\begin{proof}[Proof of Theorem \ref{thm:ddell}]
Let $\kddell\Lambda=m<\infty$. For any simple $S$ and exact sequence
\begin{equation}
\label{eq:ddell_sequence}
0 \xrightarrow{f_{n+1}} C_n \xrightarrow{f_n} C_{n-1} \to \cdots \to C_1 \xrightarrow{f_1} C_0 \xrightarrow{f_0} S \to 0
\end{equation}
such that $(i+k)$-$\dell C_i\leq m-i$ for $i=1,2,\dots,n$, we can split \eqref{eq:ddell_sequence} into $n$ short exact sequences:
\[
0\to \coker f_{i+1} \xrightarrow{f_i} C_{i-1} \xrightarrow{f_{i-1}} \coker f_i \to 0,
\]
where $i=1,2,\dots,n$, and in particular $\coker f_{n+1}=C_n$, $\coker f_1=S$.

For $i=n$, we have the short exact sequence $0\to C_n \xrightarrow{f_n} C_{n-1} \xrightarrow{f_{n-1}} \coker f_n \to 0$ satisfying the conditions of Lemma \ref{lem:v1_induction}, so $(n+k-1)$-$\edell(\coker f_n) \leq m-n+1$.

For $i=n-1$, the short exact sequence $0\to \coker f_n \xrightarrow{f_{n-1}} C_{n-2} \xrightarrow{f_{n-2}} \coker f_{n-1} \to 0$ satisfies the conditions of Lemma \ref{lem:v2_induction}, so $(n+k-2)$-$\edell(\coker f_{n-1})\leq m-n+2$. Inductively, we apply Lemma \ref{lem:v2_induction} to $i=n-2, n-3, \dots,1$ to conclude that $k$-$\edell S\leq m$. Therefore, the theorem follows when $k=1$.
\end{proof}
}

We obtain Theorem \ref{thm:theorem1} by combining Proposition \ref{prop:edell_vs_findim}, Theorem \ref{thm:subddell}, and Theorem \ref{thm:ddell} when $k=1$.

{\color{black}
\section{Derived delooping level of all modules}
\label{sec:trivial_extensions}

In \cite{kershaw2023}, the authors use the construction in \cite{ringel2020gorenstein} to give an example of an algebra with infinite delooping level. Here we will show, as a consequence of very general lemmas, that the example of \cite{kershaw2023} has finite $\kddell\!$ for all $k\ge1$.

\begin{lemma}\label{lemma: kddell finite is closed under extension}
Suppose that $0\to A\to B\to C\to 0$ is a short exact sequence with $\kddell A=m_1$, $\kddell C=m_2$. Then $\kddell B\leq m_1+m_2+1$.
\end{lemma}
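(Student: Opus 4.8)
The plan is to prove the slightly stronger statement, more convenient for induction: for every $k\ge 1$ and all $m_1,m_2\in\N$, if $0\to A\to B\to C\to 0$ is exact with $\kddell A\le m_1$ and $\kddell C\le m_2$, then $\kddell B\le m_1+m_2+1$. I would induct on $m_1$, keeping the statement uniform in $k$ and $m_2$ (the recursion will lower $m_1$ while raising $k$).

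The one preparatory fact is that the syzygy functor shifts these resolutions cleanly: if $\kddell C\le m_2$ then $(k+1)\text{-}\ddell(\Omega C)\le m_2$. Indeed, applying Lemma~\ref{lem:rotating_ses} to a resolution $0\to C_{n}\to\cdots\to C_0\to C\to 0$ witnessing $\kddell C\le m_2$ produces an exact sequence $0\to \Omega C_{n}\to\cdots\to\Omega C_0\to \Omega C\to 0$ of the same length, and a one-line syzygy computation gives $(i+k+1)\text{-}\dell(\Omega C_i)\le (i+k)\text{-}\dell C_i\le m_2-i$, which is exactly the bound a $(k+1)$-derived-delooping resolution of $\Omega C$ requires.

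For the induction step the key point is that one cannot simply splice a resolution of $C$ on top of $B$: the modules $C_i$ are not projective, so $C_0\onto C$ need not lift along $B\onto C$, and the naive horseshoe argument fails. Instead I would first rotate — by Lemma~\ref{lem:rotating_ses}, $0\to A\to B\to C\to 0$ yields $0\to \Omega C\to A\oplus P_C\to B\to 0$, which already has the right shape at the top of a resolution of $B$. Given a resolution $0\to A_{n_1}\to\cdots\to A_0\xrightarrow{a}A\to 0$ witnessing $\kddell A\le m_1$, compose $A_0\oplus P_C\onto A\oplus P_C\onto B$; its kernel $K$ sits in a short exact sequence $0\to\bar A\to K\to\Omega C\to 0$, where $\bar A=\ker a=\im(A_1\to A_0)$, and the tail of the resolution of $A$ exhibits $(k+1)\text{-}\ddell\bar A\le m_1-1$. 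Applying the inductive hypothesis (with $k$ replaced by $k+1$, $m_1$ by $m_1-1$, $m_2$ unchanged, and using the preparatory fact for $\Omega C$) to $0\to\bar A\to K\to\Omega C\to 0$ gives $(k+1)\text{-}\ddell K\le m_1+m_2$. Splicing a $(k+1)$-derived-delooping resolution of $K$ onto $0\to K\to A_0\oplus P_C\to B\to 0$, whose middle term has $k\text{-}\dell\le m_1$ (as $P_C$ is projective), then yields a resolution of $B$ of length at most $m_1+m_2+1$ with all the delooping bounds required in the definition of $\kddell$, so $\kddell B\le m_1+m_2+1$. In the base case $m_1=0$ we have $k\text{-}\dell A\le 0$, the resolution of $A$ has length $0$, hence $\bar A=0$ and $K\cong\Omega C$, and the final splicing step alone gives $\kddell B\le m_2+1$.

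I expect the main obstacle to be purely bookkeeping: every application of $\Omega$ and every truncation of a resolution raises the relevant index by one, so one must be careful that the inductive hypothesis is stated uniformly in $k$ and that each spliced sequence literally satisfies the constraints ``$(i+k)\text{-}\dell C_i\le m-i$ for $i\le n\le m$'' from the definition of $\kddell$. There is no genuine conceptual difficulty once the rotation trick and the ``induct on $m_1$, uniformly in $k$'' framework are set up; in fact the same computation yields the sharper bound $\kddell B\le m_1+m_2$ away from one boundary case, but $m_1+m_2+1$ is all that is needed here.
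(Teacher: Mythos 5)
Your proof is correct, and the recursion mechanism checks out: the preparatory shift fact $(k+1)\text{-}\ddell(\Omega C)\le m_2$ holds since applying $\Omega$ to a witnessing resolution of $C$ and observing $(i+k+1)\text{-}\dell(\Omega C_i)\le(i+k)\text{-}\dell C_i$ preserves both the length bound $n_2\le m_2$ and the position-wise delooping bounds; the tail of the resolution of $A$ really does exhibit $(k+1)\text{-}\ddell\bar A\le m_1-1$ (reindexing $A_{i+1}$ to position $i$ turns $(i+1+k)\text{-}\dell A_{i+1}\le m_1-(i+1)$ into exactly the required $(i+k+1)\text{-}\dell\le(m_1-1)-i$); the kernel exact sequence $0\to\bar A\to K\to\Omega C\to 0$ is the standard $\ker(vu)$ computation; and the final splice of a $(k+1)$-witnessing resolution of $K$ onto $0\to K\to A_0\oplus P_C\to B\to 0$ has length $\le m_1+m_2+1$ with each term satisfying the required $k$-shifted delooping bound, the position-$0$ term by $k\text{-}\dell(A_0\oplus P_C)=k\text{-}\dell A_0\le m_1$.

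The route is genuinely different in organization from the paper's. The paper gives a single explicit construction: it takes witnessing resolutions $(D_i)$ of $A$ and $(E_j)$ of $C$, a length-$(n_1+1)$ projective resolution $(P_i)$ of $C$, applies $\Omega^{n_1+1}$ to the whole $E$-resolution via Lemma~\ref{lem:rotating_ses}, and splices the three into one long exact sequence
\[
0\to \Omega^{n_1+1}E_{n_2}\to\cdots\to\Omega^{n_1+1}E_0\to D_{n_1}\oplus P_{n_1}\to\cdots\to D_0\oplus P_0\to B\to 0,
\]
then verifies the delooping conditions term-by-term. You instead set up an induction on $m_1$, uniform in $k$ and $m_2$, and peel off one step of the $A$-resolution per recursion by rotating $0\to A\to B\to C\to 0$ to $0\to\Omega C\to A\oplus P_C\to B\to 0$ and passing to the kernel $K$ of $A_0\oplus P_C\onto B$. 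Unrolling your induction would reconstruct essentially the same long exact sequence, and both arguments lean on Lemma~\ref{lem:rotating_ses} as the engine, so the underlying content is close. What each approach buys: the paper's direct construction is shorter once written down and has no induction hypothesis to track, but requires a careful one-shot verification of the delooping bounds at positions $n_1+1+i$; your inductive formulation localizes all bookkeeping to a single short exact sequence at a time and makes the role of the auxiliary parameter $k$ transparent (which the paper exploits separately in its Lemma~\ref{lemma: ddell of syzygies}), at the cost of having to state and carry the ``uniform in $k$'' hypothesis. One small aside: your parenthetical claim of the sharper bound $m_1+m_2$ away from a boundary case is plausible but not established by the computation as written, since the final splice only gives $(j+1+k)\text{-}\dell K_j\le m_1+m_2-j$ rather than $\le m_1+m_2-j-1$; it would require propagating the tighter $m_2-1$ bound from the shift fact, which holds only when $n_2<m_2$.
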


\begin{proof}
By assumption, we have two exact sequences
\begin{equation}
\label{eq:exact sequence for A}
0\to D_{n_1} \to D_{n_1-1} \to \cdots \to D_0 \to A\to 0,
\end{equation}
\begin{equation}
\label{eq:exact sequence for C}
0\to E_{n_2} \to E_{n_2-1} \to \cdots \to E_0 \to C\to 0,
\end{equation}
where $(i+k)$-$\dell D_i\leq m_1-i$ for $i=0,\dots,n_1$ and $(i+k)$-$\dell E_i\leq m_2-i$ for $i=0,\dots,n_2$.

We also need the first $n_1+1$ steps of the minimal projective resolution of $C$
\begin{equation}
\label{eq:proj_res_of_C}
0\to \Omega^{n_1+1} C \to P_{n_1} \to \cdots \to P_1 \to P_0 \to C \to 0,
\end{equation}
and another exact sequence
\begin{equation}
\label{eq:syzygy_exact_seq_for_C}
0\to \Omega^{n_1+1} E_{n_2} \to \Omega^{n_1+1} E_{n_2-1} \to \cdots \to \Omega^{n_1+1} E_0 \to \Omega^{n_1+1} C \to 0
\end{equation}
by applying Lemma \ref{lem:rotating_ses} to \eqref{eq:exact sequence for C}.

Combining \eqref{eq:exact sequence for A}, \eqref{eq:proj_res_of_C}, and \eqref{eq:syzygy_exact_seq_for_C}, it is straightforward to check that we get the long exact sequence
\begin{equation}
\label{eq:long_exact_seq_lemma}
0\to \Omega^{n_1+1}E_{n_2} \to \cdots \to \Omega^{n_1+1}E_0 \overset{f}{\to} D_{n_1}\oplus P_{n_1} \to \cdots \to D_1\oplus P_1 \to D_0\oplus P_0 \overset{g}\to B \to 0,
\end{equation}
where $f$ maps into $P_{n_1}$, which can be factored as $\Omega^{n_1+1}E_0\onto \Omega^{n_1+1}C \inc P_{n_1}$, and $g$ can be factored as $D_0\oplus P_0\onto A\oplus P_0\onto B$ and the last surjective map $P_0 \onto B$ is $g'$ in Lemma \ref{lem:rotating_ses}.

Now, for $\kddell B\leq m_1+m_2+1$, it remains to check
\begin{enumerate}
\item $(i+k)$-$\dell D_i \leq m_1+m_2+1-i$ for $i=0,\dots,n_1$,
\item $(i+n_1+1+k)$-$\dell\Omega^{n_1+1+i} E_i\leq m_1+m_2-i-n_1$ for $i=0,\cdots,n_2$.
\end{enumerate}

A stronger version of the first statement holds due to the conditions in \eqref{eq:exact sequence for A}. For the second statement, it is equivalent to show $\Omega^{m_1+m_2+1} E_i\dirsum \Omega^{m_1+m_2+1+k} N_i$ for some $N_i$ and $i=0,\dots,n_2$, but this is immediately implied from the conditions in \eqref{eq:exact sequence for C}.
\end{proof}

The previous lemma implies that if $\kddell\Lambda<\infty$ for some $k$, then $\kddell M<\infty$ for all finitely generated $\Lambda$-modules $M$, while the same statement is not true for the delooping level by Remark \ref{rem: dell not closed under ext} and \cite{kershaw2023}.

\begin{lemma}\label{lemma: ddell of syzygies}
If $M$ has bounded $\kddell\!$, then $\Omega M$ has bounded $(k+1)\text-\ddell\!$ and therefore bounded $\kddell\!$.
\end{lemma}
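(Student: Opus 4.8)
The plan is to start from a shortest derived resolution realizing $\kddell M$, shift every term by one syzygy using Lemma \ref{lem:rotating_ses}, and check that the shifted complex realizes $(k+1)\text{-}\ddell\,\Omega M$ with the very same numerical data. The clause ``therefore bounded $\kddell\!$'' is then immediate from Remark \ref{rmk:finite_k+1_ddell_implies_finite_k_ddell}.

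Concretely, suppose $\kddell M$ is finite; since the defining set in Definition \ref{def:ddell} is upward closed in $\N$, write $\kddell M = m$, realized by an exact sequence $0\to C_n\to C_{n-1}\to\cdots\to C_0\to M\to 0$ in $\mod\Lambda$ using $n$ (so $n\le m$), with $(i+k)\text{-}\dell C_i\le m-i$ for $i=0,\dots,n$. Applying the general form \eqref{eq:rotating_ses_long} of Lemma \ref{lem:rotating_ses} with shift $1$ produces an exact sequence $0\to \Omega C_n\to\cdots\to \Omega C_0\to \Omega M\to 0$ in $\smod\Lambda$; after restoring projective summands, as our standing convention permits, this is an exact sequence in $\mod\Lambda$ of exactly the shape required by Definition \ref{def:ddell} for $\Omega M$, still with $n$ terms and $n\le m$.

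The one computational input needed is the elementary syzygy-shift inequality $(\ell+1)\text{-}\dell(\Omega C)\le \ell\text{-}\dell(C)$, valid for every module $C$ and every $\ell\ge 1$: if $\ell\text{-}\dell C=p<\infty$ and $\Omega^p C\dirsum \Omega^{p+\ell}N$ in $\smod\Lambda$, then applying the additive endofunctor $\Omega$ of $\smod\Lambda$ (which preserves direct summands) gives $\Omega^p(\Omega C)=\Omega^{p+1}C\dirsum \Omega^{p+\ell+1}N=\Omega^{p+(\ell+1)}N$, so $(\ell+1)\text{-}\dell(\Omega C)\le p$. Applying this with $C=C_i$ and $\ell=i+k$ — which is $\ge 1$ because $k\ge 1$ — yields $(i+(k+1))\text{-}\dell(\Omega C_i)\le (i+k)\text{-}\dell(C_i)\le m-i$ for every $i$. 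Combined with $n\le m$, this says precisely that $m$ lies in the defining set for $\Omega M$, i.e. $(k+1)\text{-}\ddell(\Omega M)\le m$, so $\Omega M$ has bounded $(k+1)\text{-}\ddell\!$; Remark \ref{rmk:finite_k+1_ddell_implies_finite_k_ddell} then upgrades this to $\kddell(\Omega M)\le m$ as well.

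I do not expect a serious obstacle. The only points needing care are the degenerate cases in which $M$ or some $C_i$ is projective, so that the corresponding syzygy vanishes and the delooping inequalities hold trivially, and the fact that Lemma \ref{lem:rotating_ses} delivers an exact sequence only up to projective summands — harmless here because Definition \ref{def:ddell} is read modulo projectives throughout. Beyond that, the sole thing to get exactly right is matching the index passage $(i+k)\to(i+(k+1))$ to the replacement of each $C_i$ by $\Omega C_i$.
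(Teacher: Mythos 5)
Your proof is correct and follows essentially the same approach as the paper: rotate the realizing exact sequence once by $\Omega$ via Lemma \ref{lem:rotating_ses} and track how the delooping indices shift. The only difference is that the paper takes the extra step of observing that $(\ell+1)\text{-}\dell(\Omega C)\le p-1$ whenever $\ell\text{-}\dell C\le p$ with $p\ge 1$, which lets them conclude $(k+1)\text{-}\ddell(\Omega M)\le m-1$ generically (splitting off the boundary cases $m=n$ and $m=0$); you use only the coarser shift $(\ell+1)\text{-}\dell(\Omega C)\le \ell\text{-}\dell C$ and get $\le m$ uniformly, which is cleaner and entirely sufficient since the lemma asserts only boundedness.
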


\begin{proof}
Suppose $\kddell M=m$ using $n$ is finite. Assume $m\neq n$ and $m\neq 0$. Then there is an exact sequence
\begin{equation}
\label{eq:lem:ddell of syzygies}
0\to C_n \to C_{n-1} \to \cdots \to C_0 \to M \to 0,
\end{equation}
where $(i+k)$-$\dell C_i\leq m-i$ for $i=0,\dots,n$, or equivalently, $(i+k+1)$-$\dell \Omega C_i\leq m-i-1$ for $i=0,\dots,n$.

Applying Lemma \ref{lem:rotating_ses}, we obtain another exact sequence
\[
0\to \Omega C_n \to \cdots \to \Omega C_0 \to \Omega M \to 0,
\]
which implies $(k+1)$-$\ddell \Omega M \leq m-1$.

If $m=n$, instead of having $(i+k+1)$-$\dell \Omega C_i\leq m-i-1$, we have $(i+k+1)$-$\dell \Omega C_i\leq m-i$ in \eqref{eq:lem:ddell of syzygies}, so $(k+1)$-$\ddell \Omega M \leq m$.

If $m=0$, $\kddell M = \kdell M=0$, so $(k+1)$-$\dell \Omega M = (k+1)$-$\ddell \Omega M = 0$.
\end{proof}

\begin{theorem}
    The $\kddell\!$ of any local algebra $\Lambda$ is finite for all $k$.
\end{theorem}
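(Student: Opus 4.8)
The plan is to prove the stronger statement that, for every $k\ge 1$, $\kddell M<\infty$ for \emph{every} finitely generated $\Lambda$-module $M$; the theorem then follows because a local algebra has a unique simple module $S$, so $\kddell\Lambda=\kddell S$. If $\Lambda$ is semisimple then $S=\Lambda$ is projective and $\kddell S=0$ for every $k$, so assume $\Lambda$ is not semisimple. I would argue by induction on $k$.

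For the base case $k=1$: since $\Lambda$ is local and not semisimple, $0\neq\soc\Lambda\subseteq\rad\Lambda$, so I fix a submodule $S_0\cong S$ of $\soc\Lambda$ and set $N:=\Lambda/S_0$. Then $\top N=\top\Lambda=S$, hence $P_N=\Lambda$ and $\Omega N=\ker(\Lambda\to N)=S_0\cong S$; in particular $S$ is a syzygy, i.e.\ $\dell S=0$. Consequently $\Omega S\cong\rad\Lambda$ is a second syzygy (namely $\Omega^2 N$), so $2$-$\dell(\Omega S)=0$. Applying the definition of $\ddell$ to the short exact sequence $0\to\rad\Lambda\to\Lambda\to S\to 0$ with $C_0=\Lambda$ (projective, so $\dell C_0=0$) and $C_1=\rad\Lambda$ then gives $\ddell S\le 1$, hence $\ddell\Lambda\le1$. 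To upgrade this to all finitely generated modules I use that over the local algebra $\Lambda$ every such $M$ admits a composition series all of whose factors are $S$; then $\ddell M<\infty$ follows from $\ddell S<\infty$ by induction on composition length, applying Lemma~\ref{lemma: kddell finite is closed under extension} at each step.

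For the inductive step, assume $\kddell M<\infty$ for every finitely generated $M$ and prove the same statement for $k+1$. With $N=\Lambda/S_0$ as above, $\Omega N\cong S$ and, by the hypothesis, $\kddell N<\infty$; Lemma~\ref{lemma: ddell of syzygies} then yields $(k+1)$-$\ddell S=(k+1)$-$\ddell(\Omega N)<\infty$, so $(k+1)$-$\ddell\Lambda<\infty$. Finally, exactly as in the base case, Lemma~\ref{lemma: kddell finite is closed under extension} applied along the local composition series promotes this to $(k+1)$-$\ddell M<\infty$ for every finitely generated $M$, closing the induction.

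The point that needs care is keeping the induction self-sustaining: Lemma~\ref{lemma: ddell of syzygies} only transfers finiteness from a module to its syzygy, and the relevant module here is $N=\Lambda/S_0$, which is not simple, so one genuinely must carry the strengthened hypothesis ``$\kddell M<\infty$ for all finitely generated $M$'' through the induction rather than just the statement about $S$. Everything else is formal; the only structural inputs are $\Omega S\cong\rad\Lambda$ and $\dell S=0$, both immediate from $\soc\Lambda\subseteq\rad\Lambda$ for a non-semisimple local algebra.
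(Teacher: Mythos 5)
Your proof is correct and follows essentially the same route as the paper's: establish $\dell S=0$ for the unique simple $S$, extend to $\kddell M<\infty$ for all finitely generated $M$ via composition series and Lemma~\ref{lemma: kddell finite is closed under extension}, then bootstrap along $k$ using $S=\Omega(\Lambda/S)$ together with Lemma~\ref{lemma: ddell of syzygies}. The only difference is that you spell out the justification of $\dell S=0$ and the induction on $k$ that the paper leaves as ``proceeding like so,'' which is a reasonable amount of extra care, not a different method.
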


\begin{proof}
Local algebras only have one simple module $S$, so $\dell S = \ddell S = 0$. By Lemma \ref{lemma: kddell finite is closed under extension}, $\ddell M$ is finite for all finitely generated modules $M$. Take $M=\Lambda/S$. Then $S=\Omega M$, so $2$-$\ddell S$ is finite by Lemma \ref{lemma: ddell of syzygies}. This implies $2$-$\ddell M$ is finite for all $M$. Proceeding like so, we get $\kddell M<\infty$ for all $\Lambda$-modules $M$.
\end{proof}

\begin{corollary}
Let $M$ be any (finitely generated) module over a local algebra $A$ and consider the one-point extension $\Lambda=A[M]$. Then $\kddell \Lambda$ is finite for all $k$.
\end{corollary}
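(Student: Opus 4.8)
The plan is to reduce the corollary to the local case, using the structure of the one-point extension together with the theorem just proved. Write $\Lambda = A[M]$. I would begin by recalling the standard features of one-point extensions: $\mod\Lambda$ has exactly one simple module, say $S_0$, beyond the unique simple $S$ of $A$; the copy of $\mod A$ inside $\mod\Lambda$ is a Serre subcategory (full, closed under submodules, quotients, and extensions); the projective cover $P_{S_0}$ satisfies $\rad P_{S_0}\cong M\in\mod A$; and every other indecomposable projective $\Lambda$-module lies in $\mod A$. Two consequences drive the argument. First, for $Y\in\mod A$ the $\Lambda$-projective cover of $Y$ is its $A$-projective cover, so $\Omega_\Lambda^i Y = \Omega_A^i Y$ for all $i\ge 0$. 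Second — and this is the key observation — the radical of any projective $\Lambda$-module lies in $\mod A$, so $\Omega_\Lambda Z\subseteq\rad P_Z$ lies in $\mod A$ for \emph{every} $\Lambda$-module $Z$ (using that $\mod A$ is closed under submodules). Thus every first $\Lambda$-syzygy is an $A$-module, so everything can be borrowed from $A$.

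The next step is to transfer finiteness of $\kddell$ from $A$ to $\Lambda$ on the subcategory $\mod A$. For $Y\in\mod A$ and any $j$, a delooping witness over $A$, i.e. a relation $\Omega_A^n Y\dirsum\Omega_A^{n+j}N$ with $N\in\mod A$, is simultaneously a witness over $\Lambda$ (the syzygies and the module $N$ all live in $\mod A$), so the $j$-delooping level of $Y$ over $\Lambda$ is at most the one over $A$. Applying this termwise to an exact sequence in $\mod A$ realizing the $k$-derived delooping level of $Y$ over $A$ — it stays exact in $\mod\Lambda$ because $\mod A$ is Serre — gives that the $k$-derived delooping level of $Y$ over $\Lambda$ is at most the one over $A$, for every $k$. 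Since $A$ is local, the proof of the preceding theorem (finiteness of $\kddell$ for local algebras) gives the stronger statement that the latter is finite for \emph{every} $Y\in\mod A$ and every $k\ge1$; hence every module in $\mod A$ has finite $\kddell$ over $\Lambda$, for all $k$.

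Finally I would need to pass from a syzygy back up to the module, packaged as a small lemma: \emph{if $\Omega X$ has finite $(k+1)$-$\ddell$, then $X$ has finite $k$-$\ddell$}, with $k$-$\ddell X\le (k+1)$-$\ddell(\Omega X)+1$. To prove it, take an exact sequence $0\to C_{n_1}\to\cdots\to C_0\to\Omega X\to 0$ realizing $(k+1)$-$\ddell(\Omega X)=m_1$ using $n_1$, so that $(i+k+1)$-$\dell C_i\le m_1-i$ for $i=0,\dots,n_1$; splice it with $0\to\Omega X\to P_X\to X\to 0$ to obtain an exact sequence $0\to C_{n_1}\to\cdots\to C_0\to P_X\to X\to 0$ and reindex so that $P_X$ occupies position $0$ and $C_i$ occupies position $i+1$. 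The term in position $0$ is projective, so its condition holds trivially; the term $C_i$ in position $i+1$ must satisfy $(i+1+k)$-$\dell C_i\le(m_1+1)-(i+1)$, which is precisely the assumption $(i+k+1)$-$\dell C_i\le m_1-i$. Hence $k$-$\ddell X\le m_1+1$. Now for any $\Lambda$-module $Z$ we have $\Omega_\Lambda Z\in\mod A$, whose $(k+1)$-$\ddell$ over $\Lambda$ is finite by the previous step, so $k$-$\ddell Z<\infty$ by the lemma. As this holds for all $k$ and all $\Lambda$-modules $Z$, in particular for the finitely many simple modules of $\Lambda$, we conclude $\kddell\Lambda<\infty$ for all $k$.

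The main thing to get right is the index bookkeeping in the last lemma: descending one syzygy costs an increment of the auxiliary parameter (from $k+1$ down to $k$), which is exactly why the lemma is phrased with a shift in $k$ and why this does no harm here — we are proving finiteness for every $k$ simultaneously, so a loss of one in the parameter is invisible. The only other point that needs attention is applying the $\mod A$-versus-$\mod\Lambda$ comparison of delooping levels to the correct exact sequence term by term; this rests entirely on syzygies of $A$-modules being computed identically over both rings, which is built into the one-point-extension structure. Everything else is a direct appeal to the preceding theorem on local algebras.
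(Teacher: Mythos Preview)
Your argument is correct and follows the same route as the paper: use the preceding theorem on local algebras to get finiteness of $\kddell$ for all $A$-modules, transfer this to $\mod A\subset\mod\Lambda$ via the one-point-extension structure, and then pass from $\Omega_\Lambda T=M$ to the new simple $T$ by a splicing argument. The paper's proof is terser---it simply asserts ``$(k+1)\text{-}\ddell M<\infty$ implies $\kddell T<\infty$'' and leaves both the $A$-to-$\Lambda$ transfer and the ``go up one syzygy'' step to the reader---whereas you spell out explicitly that $\Lambda$-syzygies of $A$-modules are $A$-syzygies, that delooping witnesses over $A$ are witnesses over $\Lambda$, and you prove the inequality $\kddell X\le (k+1)\text{-}\ddell(\Omega X)+1$ by splicing with $0\to\Omega X\to P_X\to X\to 0$. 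Your extra care is warranted and the index bookkeeping in your lemma is right (the shift $n_1\mapsto n_1+1$, $m_1\mapsto m_1+1$ matches the definition exactly). One small remark: you prove finiteness for \emph{all} $\Lambda$-modules $Z$, which is slightly more than needed but harmless, and indeed matches what the paper obtains in the local case.
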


\begin{proof}
    Let $S$ be the unique simple $A$-module and let $T$ be the other simple $\Lambda$-module. We know that $\kddell S<\infty$ for all $k$. Also, $(k+1)$-$\ddell M<\infty$ for all $k$. This implies that $\kddell T<\infty$ for all $k$.
\end{proof}

More generally, this same proof shows that, for $\Lambda=A[M]$ to have finite $\kddell\!$, it suffices for $A$ to have finite $\kddell\!$ and for $M$ to have finite $(k+1)$-$\ddell\!$.

\begin{remark}
The example of \cite{kershaw2023} is an algebra with infinite delooping level but finite derived delooping level. This shows that the derived delooping level is a stronger invariant than the original delooping level. In fact, we will show next that $\ddell\Lambda = \Findim\Lambda^{\op}=1$ in their case.
\end{remark}
}

\begin{example}[Example of $\Lambda$ where $\dell\Lambda=\infty$ in \cite{kershaw2023} and \cite{ringel2020gorenstein}. Also, $\subddell\Lambda=\infty$, but $\ddell\Lambda=\Findim\Lambda^{\op}=1$]
\label{ex:infinite_delooping_level}

We first summarize the results from \cite{ringel2020gorenstein} and \cite{kershaw2023} in our context. Let $K$ be a field and $q\in K$ have infinite multiplicative order. Let $A$ be the six-dimensional algebra $K\langle x,y,z\rangle/I$, where $I=(x^2, y^2, z^2, zy, yx+qxy, zx-xz, yz-xz)$. Then the quiver of $A$ is $\begin{tikzcd} 1 \arrow[loop right,"y"] \arrow[loop below, "x"] \arrow[loop above, "z"] \end{tikzcd}$ subject to the relations in $I$. The indecomposable projective $A$-module is six-dimensional and can be visualized as

\begin{equation}
\label{mod:local_algebra}
\begin{tikzcd}
{} & {} & e_1 \arrow[d, dash, "y"] \arrow[dll, dash, "x", swap] \arrow[drr, dash, "z"] & {} & {} \\
x \arrow[dr, dash, "-qy", swap] \arrow[drrr, dash, "z" near start] & {} & y \arrow[dl, dash, "x" near start, swap] \arrow[dr, dash, "z"] & {} & z \arrow[dl, dash, "x"] \\
{} & yx=-qxy & {} & xz=yz=zx & {}
\end{tikzcd}
\end{equation}
\end{example}

\textbf{Notation.} For easier reference, we will let $X=yx=-qxy$ and $Y=xz=yz=zx$.

For all $\alpha\in K$, define $M(\alpha)$ as the three-dimensional $A$-module with basis $v, v', v''$ such that $vx=\alpha v'$, $vy=v'$, $vz=v''$. Let $\Lambda = A[M(q)]$ be the one-point extension. The quiver of $\Lambda$ is $\begin{tikzcd} 2 \arrow[r, "p"] & 1 \arrow[loop right,"y"] \arrow[loop below, "x"] \arrow[loop above, "z"] \end{tikzcd}$. The indecomposable projective $\Lambda$-module at 2 is
$P_2 = \begin{matrix} 2 \\ M(q) \end{matrix} = 
\begin{tikzcd} {} & 2 \arrow[d, dash, "p"] & {} \\
{} & v \arrow[dl, dash, "x", shift right, swap] \arrow[dl, dashed, no head, "qy", shift left] \arrow[dr, dash, "z"] & {} \\
v' & {} & v'' \end{tikzcd}$,
where the dashed segment $qy$ is used to indicate $v\cdot x=qv'=v\cdot qy$.

The indecomposable projective $\Lambda$-module $P_1$ at 1 is isomorphic to \eqref{mod:local_algebra}. We know from \cite{kershaw2023} that $\dell \Lambda =\dell\!_{\Lambda} S_2 =\infty$, while $\Findim\Lambda^{\op}=1$. We also know that $\subddell\Lambda=\dell\!_{\Lambda} S_2=\infty$ since the simple module at 2 does not embed in any other module other than itself.

However, we will prove $\Lambda=A[M(q)]$ has $\ddell\!$ equal to $\Findim\Lambda^{\op}=1$. We first show that the simple module at 1 is infinitely deloopable. Note that $\Omega M(1)$ has $S_1$ as a direct summand because of the short exact sequence

\begin{equation}
\label{eq:syzygy_of_M1}
0 \to \begin{tikzcd} x-y \arrow[d, dash, "x", shift right, swap] \arrow[d, dashed, no head, "qy", shift left] \\ X \end{tikzcd} \oplus Y
\overset{g}{\to}
\begin{tikzcd}[column sep=tiny]
{} & {} & e_1 \arrow[d, dash, "y"] \arrow[dll, dash, "x", swap] \arrow[drr, dash, "z"] & {} & {} \\
x \arrow[dr, dash, "-qy", swap] \arrow[drrr, dash, "z" near start] & {} & y \arrow[dl, dash, "x" near start, swap] \arrow[dr, dash, "z"] & {} & z \arrow[dl, dash, "x"] \\
{} & X & {} & Y & {} \end{tikzcd}
\overset{f}{\to}
\begin{tikzcd} {} & v \arrow[dl, dash, "x", shift right, swap] \arrow[dl, dashed, no head, "y", shift left] \arrow[dr, dash, "z"] & {} \\
v' & {} & v'' \end{tikzcd}
\to 0,
\end{equation}
where $f$ is the canonical projection sending $e_1$ to $v$ and the kernel of $g$ is written in a way that it embeds naturally into $P_1$.

The module $M(1)$ is special in that $(x-y)z=0$, so the three dimensional $\Omega M(1)$ is decomposable. On the other hand, if $\alpha\neq 1$, it can be shown that $\Omega M(\alpha) = M(q\alpha)$ is indecomposable \cite[Lemma 6.4]{ringel2020gorenstein}. Therefore, we conclude that $S_1$ is infinitely deloopable since
\begin{equation}
\label{eq:1_infinitely_deloopable}
S_1 \dirsum \Omega^{n+1} M(q^{-n})
\end{equation}
for all $n\in\Z_{>0}$.

Consider the short exact sequence in $\mod\Lambda$
\begin{equation}
\label{eq:local_example}
0 \to S_1 \to \begin{matrix} 2 \\ v \end{matrix} \to S_2 \to 0.
\end{equation}

Since $\Omega \left( \begin{matrix} 2 \\ v \end{matrix} \right) = S_1 \oplus S_1$ and $S_1$ is infinitely deloopable, $\dell\!_{\Lambda} \!\left( \begin{matrix} 2 \\ v \end{matrix} \right) = 1$ and 2-$\dell\!_{\Lambda} S_1 = 0$. By Definition \ref{def:ddell}, $\ddell\!_{\Lambda} S_2 = 1$, and therefore $\ddell \Lambda = 1$.

\begin{remark}
\label{rem: dell not closed under ext}
By \cite[Proposition 4.1]{kershaw2023}, $\dell\!_A M(q)=\infty$ and $\dell\!_A S_1 = 0$. Since $M(q)$ is an iterated extension of $S_1$, this shows that the set of modules with finite delooping level is not closed under extensions. On the other hand, since $M(q)$ is not a submodule of $A$, $\ddell\!_A M(q) \geq 1$. By Theorem \ref{lemma: kddell finite is closed under extension}, $\ddell\!_A M(q) \leq 1$ because $M(q)$ is the extension of $S_1\oplus S_1$ and $S_1$, where $\dell\!_A S_1 = 0$. Therefore, $\ddell\!_A M(q) = 1$.
\end{remark}

\section{Sufficient condition for $\findim\Lambda<\infty$}
\label{sec:dell_vs_phidim}

In this section, we provide a sufficient condition for the finiteness of findim by comparing $\dell\Lambda$ and $\phidim\Lambda$. Note that this comparison is done on the same algebra $\Lambda$ instead of opposite algebras. Both invariants focus on the structure of syzygies of simple $\Lambda$-modules, so it is not surprising that they may be related.

{\color{red}
}

We repeat the setup in \cite{igusa2005} in order to recall the definition of the $\phi$-dimension. Let $K_0$ be the free abelian group generated by all symbols $[M]$, where $M$ is a finitely generated $\Lambda$-module, modulo the subgroup generated by $[P]$ for projective modules $P$ and by $[M]-[N]-[N']$ for $M\cong N \oplus N'$. Thus $K_0$ has a free basis given by those $[M]$ where $M$ is indecomposable. The elements of $K_0$ are $[M]-[N]$ where $M,N$ are $\Lambda$-modules.

Define the endomorphism
\[
L: K_0\to K_0 \text{ by } [M]\mapsto [\Omega M].
\]

Let $\add M$ be the additive category of the module $M$ and let $\langle\add M\rangle$ denote the subgroup of $K_0$ generated by $[N]$ for all $N\in \add M$. More generally, for any finite set of modules $T$, we denote by $\langle \add T\rangle$ the subgroup of $K_0$ generated by $[M]$ for all $M\in \add T$. Then, we define
\[
\phi(M) = \inf\{n \mid L^m(\langle\add M\rangle) \cong L^{m+1}(\langle\add M\rangle) \, \forall m\geq n\},
\]
\[
\phidim\Lambda = \sup\{\phi (M) \mid M\in \mod\Lambda\}.
\]

We extend the definition slightly for when the additive category is generated by some subset $T$ of $\mod\Lambda$, that is,
\begin{align*}
\phidimT\Lambda & = \inf\{n \mid L^m(\langle \add T\rangle) \cong L^{m+1}(\langle \add T\rangle) \, \forall m\geq n\} 
\end{align*}

By Fitting's Lemma, $\phidimT\Lambda$ is always finite if $T$ is a finite set.

We could do the same for the delooping level and define $\dellT\Lambda := \sup \{ \dell M \mid M\in T\}$. However, note that when $T$ contains all simple modules and is contained in all indecomposable summands of syzygies of simple modules, as in the case of Theorem \ref{thm:dell_phidim_comp}, $\dellT\Lambda = \dell\Lambda$, so this definition is not useful.

\begin{theorem}
\label{thm:dell_phidim_comp}
For any finite dimensional algebra $\Lambda$ over a field $K$, let $T_{\Lambda}$ be the set of non-projective indecomposable summands of syzygies of simple $\Lambda$-modules, including all simple modules. If $T_{\Lambda}$ is a finite set, then
\[
\Findim\Lambda^{\op} \leq \subddell\Lambda \text{ or } \ddell\Lambda \leq \dell \Lambda \leq \phidimT\Lambda \leq \phidim\Lambda,
\]
and in particular, since $\phidimT\Lambda$ is finite, the finitistic dimension conjecture holds for $\Lambda^{\op}$.
\end{theorem}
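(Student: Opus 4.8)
The plan is to establish the chain of inequalities in Theorem~\ref{thm:dell_phidim_comp} piece by piece. The leftmost inequalities $\Findim\Lambda^{\op} \leq \subddell\Lambda$ (or $\ddell\Lambda$) $\leq \dell\Lambda$ are already known from Theorems~\ref{thm:subddell} and~\ref{thm:ddell}, so the only new content is $\dell\Lambda \leq \phidimT\Lambda$ under the hypothesis that $T_\Lambda$ is finite, together with the trivial observation $\phidimT\Lambda \leq \phidim\Lambda$ (which holds because $T_\Lambda \subseteq \mod\Lambda$ and $\langle\add T_\Lambda\rangle$ is one of the subgroups occurring in the supremum defining $\phidim\Lambda$, via $\phi(M)$ for $M$ the direct sum of elements of $T_\Lambda$). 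The finiteness of $\phidimT\Lambda$ when $T_\Lambda$ is finite is exactly Fitting's Lemma as noted in the excerpt, so once the main inequality is in hand the conclusion about the findim conjecture for $\Lambda^{\op}$ is immediate.

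For the key inequality $\dell\Lambda \leq \phidimT\Lambda$, set $N := \phidimT\Lambda$, so that $L^N\langle\add T_\Lambda\rangle = L^{N+1}\langle\add T_\Lambda\rangle$ as subgroups of $K_0$. I would fix a simple module $S$ and aim to show $\dell S \leq N$, i.e., that $\Omega^N S$ is a direct summand of $\Omega^{N+1} M$ for some $M \in \smod\Lambda$, up to projectives. The point is that $\Omega S$ decomposes into indecomposable summands all lying in $T_\Lambda$ (by definition of $T_\Lambda$), hence $[\Omega S] \in \langle\add T_\Lambda\rangle$, and therefore $[\Omega^{N+1} S] = L^N[\Omega S] \in L^N\langle\add T_\Lambda\rangle = L^{N+1}\langle\add T_\Lambda\rangle$. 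Writing $[\Omega^{N+1} S]$ as an integer combination $\sum a_i [\Omega^{N+1} X_i]$ with $X_i \in \add T_\Lambda$ (indecomposables), and using that $K_0$ is free on indecomposables with no projective summands, I would compare multiplicities: every indecomposable non-projective summand appearing in $\Omega^{N+1}S$ must appear, with appropriate multiplicity, among the $\Omega^{N+1} X_i$ with positive coefficient. Since $\Omega^{N}S$ and $\Omega^{N+1}(\Omega^{N} S)=\Omega^{2N+1}S$... — actually the cleaner route is: $[\Omega^N S] \in L^{N-1}\langle\add T_\Lambda\rangle$ too, and by stationarity $L^{N-1}\langle\add T_\Lambda\rangle$ and $L^N\langle\add T_\Lambda\rangle$ agree after one more application, so $[\Omega^N S]$ and $[\Omega^{N+1} S]$ generate the same "eventual image"; one then extracts from the freeness of $K_0$ that $\Omega^N S$ is (stably) a summand of a direct sum of syzygies $\Omega^{N+1} Y$ with $Y \in T_\Lambda$, and bundling the $Y$'s into a single module $M$ gives $\Omega^N S \dirsum \Omega^{N+1} M$ in $\smod\Lambda$.

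The main obstacle, and the step requiring the most care, is the passage from an \emph{equality in $K_0$} (a statement about formal differences of isomorphism classes, i.e. Grothendieck-group-level data) to an \emph{actual direct-summand relation} between modules. In general $[A] = [B]$ in $K_0$ does not imply $A \cong B$ or even that $A$ is a summand of $B$; what rescues the argument is that $L^N\langle\add T_\Lambda\rangle = L^{N+1}\langle\add T_\Lambda\rangle$ is an equality of \emph{subgroups}, so every basis element $[\Omega^N X]$ ($X \in T_\Lambda$) lies in $L^{N+1}\langle\add T_\Lambda\rangle$, which after clearing signs and using that each $\Omega^{N+1}X$ and $\Omega^{N}X$ is an honest module (so its class has all nonnegative coordinates in the free basis of indecomposables) forces the desired summand relation coordinate by coordinate. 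Concretely, if $[\Omega^N X] = \sum_i b_i[\Omega^{N+1}X_i]$ with $X_i \in T_\Lambda$, then since the left side is the class of a module, grouping the $X_i$ with $b_i > 0$ and arguing that the negative part must cancel inside a genuine module decomposition yields $\Omega^N X$ as a summand of $\bigoplus_{b_i>0}(\Omega^{N+1}X_i)^{b_i}$; applying this with $X$ running over the summands of $\Omega S$ and amalgamating gives $\dell S \le N$. I would double-check the edge cases where $\Omega S$ or some $\Omega^N S$ is projective or zero (then $\dell S \le N$ trivially), and note that the uniformity over all simple $S$ gives $\dell\Lambda = \sup_S \dell S \le N = \phidimT\Lambda$, completing the chain.
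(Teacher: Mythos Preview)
Your overall strategy is right, and your extraction of a direct-summand relation from an equation in $K_0$ (separating positive and negative coefficients and using that $K_0$ is free on indecomposables) is exactly what the paper does. However, there is a genuine gap at the very first step of the new inequality: you write ``set $N := \phidimT\Lambda$, so that $L^N\langle\add T_\Lambda\rangle = L^{N+1}\langle\add T_\Lambda\rangle$ as subgroups of $K_0$,'' and your entire argument hinges on this \emph{equality}. But the definition of $\phidimT$ only gives $L^N\langle\add T_\Lambda\rangle \cong L^{N+1}\langle\add T_\Lambda\rangle$, i.e.\ these free abelian groups have the same rank; it does \emph{not} say they coincide as subgroups of $K_0$. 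Think of $2\Z\subsetneq\Z$: these are isomorphic free abelian groups with one strictly contained in the other. So from $\phidimT\Lambda=N$ you cannot conclude that $[\Omega^N X]\in L^{N+1}\langle\add T_\Lambda\rangle$, and the equation $[\Omega^N X]=\sum_i b_i[\Omega^{N+1}X_i]$ you need is not yet justified.

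The paper patches precisely this point. Since $L^{N+1}\langle\add T\rangle\subseteq L^N\langle\add T\rangle$ are free abelian groups of the same finite rank, the quotient $L^N\langle\add T\rangle/L^{N+1}\langle\add T\rangle$ is a \emph{finite} abelian group, say of order $m$. Hence $m\cdot L^N\langle\add T\rangle\subseteq L^{N+1}\langle\add T\rangle$, so for every $T_i\in T_\Lambda$ one has $m[\Omega^N T_i]=[\Omega^{N+1}A]-[\Omega^{N+1}B]$ for some $A,B\in\add T_\Lambda$. Now your positive/negative separation applies and gives $(\Omega^N T_i)^m\oplus\Omega^{N+1}B\cong\Omega^{N+1}A$ up to projectives, whence $\Omega^N T_i\dirsum\Omega^{N+1}A$ and $\dell T_i\le N$. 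Once you insert this finite-quotient step, the rest of your outline is correct and matches the paper.
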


\begin{proof}
Let $T=\{T_1,T_2,\dots,T_N\}$. Note that we can restrict $L$ to be an endomorphism of $\langle \add T\rangle$ since $T$ is closed under taking syzygies. 
Thus, $L\langle \add T\rangle\subset \langle \add T\rangle$ which implies $L^{n+1}\langle \add T\rangle\subset L^n\langle \add T\rangle$. It is clear that $\phidimT \Lambda$ is finite since $T$ is finite, so let $\phidimT\Lambda=n$. Then $L^n\langle \add T\rangle \cong L^{n+1}\langle \add T\rangle$ as free abelian groups of finite rank. This makes the quotient $L^n\langle \add T\rangle / L^{n+1}\langle \add T\rangle$ into a finite abelian group of order, say $m$. Then $mL^n\langle \add T\rangle \subset L^{n+1}\langle \add T\rangle$.

This implies that, for all $T_i\in T$, $m[\Omega^n T_i]=[\Omega^{n+1}A]-[\Omega^{n+1}B]$ for some $A,B\in \add T$. So, $\Omega^nT_i$ is a direct summand of $\Omega^{n+1}A$ which makes $\dell T_i\le n=\phidimT\Lambda$ for all $i$.
So $\dell \Lambda\leq \phidimT \Lambda<\infty$. Since $\Findim\Lambda^{\op} \leq \dell\Lambda$, $\Findim\Lambda^{\op}<\infty$.
\end{proof}

We would like to point out that there is a similar concept in Example 1.22 of \cite{gelinas2022} called ``$n$-syzygy finiteness.'' It is straightforward to show that if $\Lambda$ is $n$-syzygy finite, then $\phidim\Lambda<\infty$, but the converse might not be true. Investigations on their relationships will be the topic of a future paper. {\color{black} Another similar concept is ``finite cosyzygy type'' which appears in Definition 7.1 of \cite{rickard2019unbounded}, where Rickard shows in Lemma 6.1 and Proposition 7.2 that if all simple $\Lambda$-modules have finite cosyzygy type, then $\Findim\Lambda<\infty$. This is equivalent to our previous Theorem \ref{thm:dell_phidim_comp}. Our theorem has the slight improvement that in addition to knowing $\Findim\Lambda$ is finite, we can find an upper bound $\Findim\Lambda\leq \phidimT \Lambda^{\op}$ if $T$ is finite.}

Theorem \ref{thm:dell_phidim_comp} immediately recovers the result that $\findim\Lambda<\infty$ if $\Lambda$ is monomial.

\begin{corollary}[\cite{green1991}]
\label{cor:findim_monomial}
The findim conjecture holds for monomial algebras.
\end{corollary}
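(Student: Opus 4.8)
The plan is to deduce Corollary~\ref{cor:findim_monomial} from Theorem~\ref{thm:dell_phidim_comp} (applied to the opposite algebra) by verifying its hypothesis: that the set $T_{\Lambda^{\op}}$ of non-projective indecomposable summands of syzygies of simple $\Lambda^{\op}$-modules, together with all simple modules, is finite. Since a monomial algebra has a monomial opposite algebra, it suffices to show that any monomial algebra $\Gamma$ has only finitely many isomorphism classes of indecomposable modules arising as summands of syzygies of simples. This is exactly the content of the structure theory of syzygies over monomial algebras.

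First I would recall the key fact, due to Huisgen-Zimmermann \cite[Theorem I]{huisgen1991predicting} and already cited in Example~\ref{ex:monomial_example}, that for a monomial algebra $\Gamma = KQ/I$ every second syzygy of a finitely generated module is a direct sum of modules of the form $q\Gamma$, where $q$ ranges over paths of length $\geq 1$ in $Q$ that are not in $I$. Since $Q$ is a finite quiver and $I$ is generated by finitely many paths, there are only finitely many such paths $q$, hence only finitely many modules $q\Gamma$ up to isomorphism. Next I would handle the first syzygies separately: $\Omega S$ for a simple module $S$ at vertex $i$ is a submodule of the radical of the indecomposable projective $P_i$, and in fact for a monomial algebra $\Omega S_i$ is itself a direct sum of modules of the form $q\Gamma$ (this follows from the same analysis, or one applies $\Omega$ once more and uses that $q\Gamma$ are summands of $\Omega^2$-images). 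Either way, every indecomposable summand of $\Omega S_i$ or of any higher syzygy $\Omega^n S_i$ with $n \geq 1$ lies in the finite set $\{\,q\Gamma \mid q \text{ a path of length} \geq 1,\ q \notin I\,\}$, possibly together with the finitely many simple modules.

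Therefore $T_{\Gamma}$ is a finite set for every monomial $\Gamma$. Applying this to $\Gamma = \Lambda^{\op}$ and invoking Theorem~\ref{thm:dell_phidim_comp} (with $\Lambda$ there replaced by $\Lambda^{\op}$), we get $\Findim(\Lambda^{\op})^{\op} = \Findim\Lambda \leq \phi_{T_{\Lambda^{\op}}}\dim\Lambda^{\op} < \infty$, and since $\findim\Lambda \leq \Findim\Lambda$, the findim conjecture holds for $\Lambda$.

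I expect the only real subtlety to be the bookkeeping around first syzygies versus second-and-higher syzygies: \cite[Theorem I]{huisgen1991predicting} is cleanest for second syzygies, so one should either quote the sharper form of that theorem covering $\Omega$ of simples directly, or simply note that it is harmless to enlarge $T_{\Gamma}$ by the finitely many summands of $\Omega S_i$ for the finitely many simples $S_i$ — finiteness is all that Theorem~\ref{thm:dell_phidim_comp} requires. No delicate estimate is needed; the corollary is a direct packaging of a known structural result with the theorem just proved.
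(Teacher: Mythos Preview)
Your proposal is correct and follows essentially the same route as the paper: use Huisgen-Zimmermann's structural result that second syzygies over a monomial algebra are direct sums of finitely many $q\Lambda$'s to conclude $T$ is finite, then invoke Theorem~\ref{thm:dell_phidim_comp}. The paper's proof is a two-line version of yours; your extra care about first syzygies and about passing to $\Lambda^{\op}$ is fine but not strictly needed, since there are only finitely many simples (hence finitely many summands of their first syzygies) and the class of monomial algebras is closed under taking opposites.
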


\begin{proof}
We write the monomial algebra $\Lambda$ as $kQ/I$. By \cite{huisgen1991predicting}, the second syzygy of any module is of the form $\oplus p\Lambda$ where $p$ is a path in $Q$ of length $\geq 1$, so $T_{\Lambda}$ is finite.
\end{proof}

The one-point extension algebra $\Lambda$ in Example \ref{ex:infinite_delooping_level} is one where $T_{\Lambda}$ is infinite because $\Omega^n S_1$ gets wider (\textit{i.e.} has more direct summands in its top) as $n$ increases. Another example is the counterexample to the $\phi$-dimension conjecture presented in \cite{hanson2022counterexample}. We present an easy-to-verify example where the set $T_{\Lambda}$ is infinite where $\Lambda$ is special biserial.

\begin{example}
\label{ex:wider_syzygies}
Consider the following quiver with 9 vertices on a cylinder. Let $x, y$ mean left and right arrows, respectively.

\begin{center}
\begin{tikzcd}
{} & 1 \arrow[dl] \arrow[dr] & {} & 2 \arrow[dl] \arrow[dr] & {} & 3 \arrow[dl] \arrow[dr] & {} \\
6 \arrow[dr] & {} & 4 \arrow[dl] \arrow[dr] & {} & 5 \arrow[dl] \arrow[dr] & {} & 6 \arrow[dl] \\
{} & 7 \arrow[dl] \arrow[dr] & {} & 8 \arrow[dl] \arrow[dr] & {} & 9 \arrow[dl] \arrow[dr]& {} \\
3 & {} & 1 & {} & 2 & {} & 3
\end{tikzcd}
\end{center}

Let $\Lambda=KQ/(xy-yx, x^2, y^2)$. Then the syzygy of any simple module gets wider. For example,
\[
\Omega S_2 = \begin{matrix} 4 & {} & 5 \\ {} & 8 & {} \end{matrix}, \quad \Omega^2 S_2 = \begin{matrix} 7 & {} & 8 & {} & 9 \\ {} & 1 & {} & 2 & {} \end{matrix}, \quad \Omega^3 S_2 = \begin{matrix}
3 & {} & 1 & {} & 2 & {} & 3 \\
{} & 6 & {} & 4 & {} & 5
\end{matrix}, \text{and so on.}
\]
\end{example}

\section{Future Directions}
\label{sec:future_directions}
The introduction of new invariants brings forth more questions. First of all, we can still explore more of their relationships among themselves. For example,
\begin{question}
Regarding the two new sub-derived and derived delooping levels,
\begin{enumerate}
\item Can we compare $\ksubddell\Lambda$ and $\kddell\Lambda$? We saw in Section \ref{sec:trivial_extensions} that $\ddell\Lambda<\subddell\Lambda$ is possible. Does the inequality $\ddell\Lambda\leq \subddell\Lambda$ hold for all Artin algebras $\Lambda$ and for general $k$? To what extent can we compare $\ddell\Lambda$ and $\subddell\Lambda$?
\item Is $\ddell\Lambda = \Findim\Lambda^{\op}$ true? There is no example that we know of where this is false.
\item If the answer to the previous question is negative, can we quantify the difference $\ddell\Lambda-\Findim\Lambda^{\op}$?
\item Can we use these new invariants to prove the findim conjecture in other settings?
\end{enumerate}
\end{question}

Moreover, we would like to point out that there is an associated torsion pair.

\begin{theorem}\label{thm: finite ddell is torsion-free class}
  The class of modules $\mathcal{F}$ with finite derived delooping level forms a torsion-free class. If this class contains all $\Lambda$-modules, then $\Findim \Lambda^{\op}<\infty$.  
\end{theorem}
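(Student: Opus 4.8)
The plan is to verify the three closure properties that define a torsion-free class: closure under submodules, closure under extensions, and closure under direct sums (equivalently, the class is closed under products if we were working with arbitrary modules, but in $\mod\Lambda$ finite direct sums suffice). Closure under extensions is exactly Lemma \ref{lemma: kddell finite is closed under extension} with $k=1$: if $0\to A\to B\to C\to 0$ is exact with $\ddell A = m_1 <\infty$ and $\ddell C = m_2 <\infty$, then $\ddell B \le m_1+m_2+1<\infty$, so $B\in\mathcal F$. Closure under finite direct sums follows from this together with the trivial observation that $\ddell 0 = 0$, since $A\oplus B$ is an extension of $B$ by $A$.

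The remaining point is closure under submodules, and this is where I would spend the effort. Suppose $A\subseteq B$ with $\ddell B = m <\infty$, witnessed by an exact sequence $0\to C_n\to\cdots\to C_0\to B\to 0$ with $(i+1)\text-\dell C_i\le m-i$. I want to produce such a sequence ending in $A$. The idea is to use the embedding $A\inc B$ together with the short exact sequence $0\to A\to B\to B/A\to 0$ and to feed it through the rotation Lemma \ref{lem:rotating_ses}. Rotating gives $0\to \Omega(B/A)\to A\oplus P\to B\to 0$ in $\mod\Lambda$ (up to projectives), and more relevantly, applying the syzygy-shift form of Lemma \ref{lem:rotating_ses} lets me splice the resolution of $B$ with syzygies of a resolution of $B/A$. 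Concretely, I would first note that $B/A$ has finite $2\text-\ddell$: indeed $\ddell B<\infty$ and $\ddell A$ — wait, we do not yet know $\ddell A<\infty$, which is the point. So instead I would argue directly: from $0\to A\to B\to B/A\to 0$ we get, after rotating, $0\to\Omega^j(B/A)\to \Omega^j A\oplus P_j\to \Omega^j B\to 0$ for every $j\ge 1$. Since $\ddell B<\infty$, Lemma \ref{lemma: ddell of syzygies} gives $\ddell\Omega^j B<\infty$ for all $j$ (indeed with controlled bound). It therefore suffices to control $\ddell\Omega^j(B/A)$ for a single well-chosen $j$: for $j$ large enough relative to $m$, the conditions $(i+1)\text-\dell C_i\le m-i$ in the witnessing sequence for $B$ force $\Omega^j C_i$ to already be a $(j+\text{const})$-fold syzygy, i.e. $\dell$ of these shifted modules collapses, and the shifted resolution becomes a genuine resolution certifying finiteness of $\ddell$ for a syzygy of $A$; then pull back down using that $\ddell$ of a module is finite whenever $\ddell$ of one of its syzygies is, which one extracts from the definition by prepending the first $j$ steps of the projective resolution.

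The main obstacle is precisely this last maneuver — going \emph{down} syzygies rather than up. Lemma \ref{lemma: ddell of syzygies} is stated in the direction "$M$ has bounded $\kddell$ $\Rightarrow$ $\Omega M$ has bounded $\ddell$"; for the submodule-closure argument I need a converse-type statement ("a module one of whose syzygies has bounded $\ddell$ itself has bounded $\ddell$"), which should hold by truncating a projective resolution: if $0\to C_n\to\cdots\to C_0\to \Omega^j A\to 0$ witnesses $\ddell\Omega^j A\le m'$, then concatenating with the exact sequence $0\to\Omega^j A\to P_{j-1}\to\cdots\to P_0\to A\to 0$ (whose middle terms are projective, hence have $\dell = 0$, so certainly $(i+1)\text-\dell P_i = 0\le m'+j-i$) yields a sequence of the required shape for $A$, showing $\ddell A\le m'+j<\infty$. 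Granting this bookkeeping, $A\in\mathcal F$. The final sentence of the theorem is immediate: if $\mathcal F=\mod\Lambda$ then in particular every simple module $S$ has $\ddell S<\infty$, so $\ddell\Lambda<\infty$, and Theorem \ref{thm:ddell} gives $\Findim\Lambda^{\op}\le\ddell\Lambda<\infty$.
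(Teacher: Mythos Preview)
Your treatment of closure under extensions and direct sums is correct and matches the paper. The gap is in the submodule step. You correctly write down the rotated sequence $0\to\Omega(B/A)\to A\oplus P\to B\to 0$, but then abandon it for a detour through higher syzygies that is both unnecessary and, as written, wrong. In your concatenation argument you only check $(i+1)\text{-}\dell P_i=0$ for the projective terms; after re-indexing, however, the $C_\ell$ sit in positions $j+\ell$ of the new sequence, so the definition of $\ddell$ demands $(\ell+j+1)\text{-}\dell C_\ell\le m'-\ell$, while the witness for $\Omega^j A$ supplies only the weaker $(\ell+1)\text{-}\dell C_\ell\le m'-\ell$. More seriously, the bare implication ``$\ddell\Omega A<\infty\Rightarrow\ddell A<\infty$'' cannot be established by any bookkeeping of this kind: every module $X$ satisfies $\dell\Omega X=0$ (take $n=0$ and $N=X$ in the definition), hence $\ddell\Omega X=0$; so if that implication held, one would get $\ddell M<\infty$ for \emph{all} $M$, and the findim conjecture would follow at once.

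The paper's argument is the one you nearly wrote and then discarded. From $0\to\Omega(B/A)\to A\oplus P\to B\to 0$, observe that $\Omega(B/A)$ is already a first syzygy, so $\dell\Omega(B/A)=0$ and hence $\ddell\Omega(B/A)=0$. Thus $A\oplus P$ is an extension of $B$ (finite $\ddell$ by hypothesis) by a module with $\ddell=0$, and Lemma~\ref{lemma: kddell finite is closed under extension} gives $\ddell A=\ddell(A\oplus P)<\infty$. This is exactly Lemma~\ref{lem:kddell_closed_under_submods} in the paper.
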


\begin{proof}
    Lemma \ref{lemma: kddell finite is closed under extension} shows $\mathcal{F}$ is closed under extensions. The next lemma shows that $\mathcal{F}$ is closed under submodules. Thus $\mathcal F$ is a torsion-free class. If $\mathcal F$ contains all modules, it contains the simple modules making $\ddell\Lambda<\infty$. Then, $\Findim \Lambda^{\op}<\infty$.
\end{proof} 

\begin{remark}
    Note that $\mathcal F$ contains all $\Lambda$-modules if and only if the corresponding torsion class $\mathcal G=\,^\perp\mathcal F$ is zero.
\end{remark}

\begin{lemma}
\label{lem:kddell_closed_under_submods}
The set $\mathcal{F}$ is closed under submodules.
\end{lemma}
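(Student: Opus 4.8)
The plan is to pull back a derived-delooping resolution of $B$ along the inclusion $A\hookrightarrow B$ and read off a resolution of $A$, working with $\kddell$ for general $k$ (so that the $k$-shift in Lemma \ref{lemma: ddell of syzygies} is available). Concretely, suppose $\kddell B=m$ using $n$, witnessed by an exact sequence
$$0\to C_n\xrightarrow{f_n}\cdots\xrightarrow{f_1}C_0\xrightarrow{f_0}B\to 0,\qquad (i+k)\text-\dell C_i\le m-i .$$
First I would form the pullback $A_0:=C_0\times_B A=f_0^{-1}(A)\subseteq C_0$. Since $f_0$ is onto, $A_0\twoheadrightarrow A$ is onto with kernel $\ker f_0=\im f_1$, so splicing the truncation $0\to C_n\to\cdots\to C_1\to\im f_1\to 0$ with $0\to\im f_1\to A_0\to A\to 0$ yields an exact sequence $0\to C_n\to\cdots\to C_1\to A_0\to A\to 0$. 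Because the terms $C_1,\dots,C_n$ occupy the same positions as before and $j\text-\dell$ is monotone in $j$, these already meet the bounds required of a $\kddell$-witness (after enlarging $m$); the entire problem thus reduces to controlling the single new bottom term $A_0$, which in addition sits in a short exact sequence $0\to A_0\to C_0\to B/A\to 0$ with $\kdell C_0$ finite.

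To control $A_0$ I would iterate this pullback and induct, say on the "length'' $n$ of the resolution (or on $\kdell C_0$). In the inductive step one repeats the construction with $A_0\subseteq C_0$ and the projective-resolution witness of $C_0$ (whose terms below the top are projective, so of zero delooping level), producing a "shorter'' inclusion to which the inductive hypothesis applies; the resulting finiteness of $\kddell A_0$, fed back through the splice above together with Lemma \ref{lemma: kddell finite is closed under extension}, bounds $\kddell A$, hence puts $A$ in $\mathcal F$. The base case is an inclusion $A_0\subseteq C_0$ with $\kdell C_0=0$, i.e.\ $C_0$ a direct summand of a $k$-th syzygy: there a sufficiently high syzygy $\Omega^{j}A_0$ embeds, via $\Omega^{j}C_0$, into a genuine syzygy and hence into a projective, and one invokes Lemma \ref{lemma: ddell of syzygies} (a submodule of a projective is stably a first syzygy, so has controlled $\kddell$) and the extension lemma to reassemble the finitely many pieces.

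The main obstacle is the arbitrary cokernel $B/A$ (equivalently $C_0/A_0$): it carries no \emph{a priori} bound on its delooping level, so everything hinges on arranging the exact sequences so that $B/A$ and its syzygies appear only as a \emph{final} term of an exact sequence — where the definition of $\kddell$ imposes no condition on it — or far enough up the syzygy tower to be absorbed into $k$-th syzygies, and never as an intermediate term that would demand finiteness of its delooping level. Getting the induction parameter to strictly decrease when passing from $B$ to $C_0$, and bookkeeping the shifts in $k$ correctly, is the delicate point; Lemmas \ref{lemma: kddell finite is closed under extension} and \ref{lemma: ddell of syzygies} are exactly the tools that make the inductive step and the base case close.
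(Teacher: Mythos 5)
Your plan takes a very different and considerably longer route than the paper's, and as outlined it has a genuine gap at the closing step. The lemma in fact falls to a single rotation: given $M\inc N$ with $N\in\mathcal F$, rotate $0\to M\to N\to \coker\to 0$ once via Lemma \ref{lem:rotating_ses} to obtain, mod projectives, $0\to\Omega\coker\to M\to N\to 0$. The first term $\Omega\coker$ is a syzygy, so $\dell(\Omega\coker)=0$ and hence $\ddell(\Omega\coker)=0<\infty$; since also $\ddell N<\infty$, Lemma \ref{lemma: kddell finite is closed under extension} immediately gives $\ddell M<\infty$. The problematic cokernel is absorbed by one rotation, which places $\Omega\coker$ at the submodule end of a short exact sequence where its delooping level is automatically zero -- no resolution of $N$, pullbacks, or induction are needed. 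You clearly see the right heuristic ("arrange things so $B/A$ never appears as an intermediate term whose delooping level must be bounded"), but miss that a single rotation achieves it.

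In your argument two steps fail. The base-case claim that "a sufficiently high syzygy $\Omega^{j}A_0$ embeds, via $\Omega^{j}C_0$, into a genuine syzygy" is not justified: the syzygy functor does not preserve inclusions (the horseshoe lemma only gives $\Omega A_0\inc\Omega C_0\oplus Q$ up to a projective $Q$), and in fact no syzygies are needed there since if $C_0\dirsum\Omega^k N$ then $C_0$ is already a submodule of a projective, hence so is $A_0$, giving $\dell A_0=0$ directly. More seriously, the final step does not close: you want to feed $\kddell A_0<\infty$ and $\kddell(\im f_1)<\infty$ back through $0\to\im f_1\to A_0\to A\to 0$ and Lemma \ref{lemma: kddell finite is closed under extension} to bound $\kddell A$. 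But in that short exact sequence $A$ is the \emph{quotient}, and Lemma \ref{lemma: kddell finite is closed under extension} bounds the middle term from the two ends, not the quotient from the middle and the kernel; torsion-free classes are in general not closed under quotients, so no analogous quotient lemma is available. Replacing the term $A_0$ in your spliced resolution by a resolution of $A_0$ would require a mapping-cone chain lift over the nonprojective $C_i$'s, which need not exist. The one-step rotation sidesteps all of this.
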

\begin{proof}
If $N\in\mathcal{F}$ and $M$ is a submodule of $N$, then $0\to M \overset{f}{\to} N \to \coker f\to 0$ rotates to $0\to \Omega\coker f\to M \to N\to 0$ by Lemma \ref{lem:rotating_ses}, where $\ddell N<\infty$ and $\ddell(\Omega\coker f)\leq \dell(\Omega\coker f)=0$. Therefore, $M\in\mathcal{F}$ by Lemma \ref{lemma: kddell finite is closed under extension}.
\end{proof}

If $\ddell\Lambda$ is finite, then all simple $\Lambda$-modules belong to $\mathcal{F}$, so the torsion-free class $\mathcal{F}$ contains all finitely generated $\Lambda$-modules. The corresponding torsion class $\mathcal{T}$ would be empty. In general, we could ask if $\mathcal T$ is finitely generated. In $\Lambda^{\op}$, we will have the torsion pair $(D\mathcal{F},D\mathcal{T})$. This connection to tilting theory allows us to potentially answer questions about findim using the derived delooping level for the same or opposite algebra.

Since $\mathcal F$ contains all finitely generated $\Lambda$-modules if and only if $D\mathcal F$ contains all finitely generated $\Lambda^{\op}$-modules and $D\mathcal F$ contains all finitely generated injective $\Lambda^{\op}$-modules, we have the following corollary of Theorem \ref{thm: finite ddell is torsion-free class}.

\begin{corollary}
    If the torsion class generated by all finitely generated injective $\Lambda$-modules contains all finitely generated $\Lambda$-modules, then $\Findim\Lambda<\infty$.
\end{corollary}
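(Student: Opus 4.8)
The plan is to reduce the statement, via the duality $D=\Hom_K(-,K)$, to Theorem~\ref{thm: finite ddell is torsion-free class} applied to the opposite algebra $\Lambda^{\op}$. Write $\mathcal F'$ for the class of finitely generated $\Lambda^{\op}$-modules of finite derived delooping level; by that theorem (used for $\Lambda^{\op}$ in place of $\Lambda$) $\mathcal F'$ is a torsion-free class in $\mod\Lambda^{\op}$, and if $\mathcal F'=\mod\Lambda^{\op}$ then $\Findim(\Lambda^{\op})^{\op}=\Findim\Lambda<\infty$. So it suffices to show the hypothesis forces $\mathcal F'$ to contain every finitely generated $\Lambda^{\op}$-module.

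First I would note that $\mathcal F'$ contains all finitely generated projective $\Lambda^{\op}$-modules, since $\ddell P\le\dell P=0$ for projective $P$. Next, $D\colon\mod\Lambda^{\op}\to\mod\Lambda$ is an exact contravariant equivalence, so it carries submodules to quotients, extensions to extensions, and finite direct sums to finite direct sums; hence it takes torsion-free classes to torsion classes. It also takes projective $\Lambda^{\op}$-modules to injective $\Lambda$-modules. Therefore $D\mathcal F'$ is a torsion class in $\mod\Lambda$ containing every finitely generated injective $\Lambda$-module, and so it contains the torsion class $\mathcal G$ \emph{generated} by the finitely generated injective $\Lambda$-modules. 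By hypothesis $\mathcal G=\mod\Lambda$, so $D\mathcal F'=\mod\Lambda$; since $D$ is a bijection on isomorphism classes of objects, this gives $\mathcal F'=\mod\Lambda^{\op}$, and the reduction in the first paragraph finishes the proof. (Equivalently, one may run the same argument with the original $\mathcal F\subseteq\mod\Lambda$ and $D\mathcal F\subseteq\mod\Lambda^{\op}$, as suggested in the remark preceding the corollary, and then relabel $\Lambda\leftrightarrow\Lambda^{\op}$.)

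The argument is essentially formal, so there is no real analytic obstacle; the points that require care are purely bookkeeping. One must keep straight on which side of the duality each class lives — in particular that $D$ \emph{interchanges} torsion and torsion-free classes, so that the hypothesis about a torsion class of injectives over $\Lambda$ corresponds to a statement about a torsion-free class of projectives over $\Lambda^{\op}$ — and one must use that ``the torsion class generated by a set $S$'' is the smallest torsion class containing $S$, so that any torsion class containing the finitely generated injectives (here $D\mathcal F'$) automatically contains $\mathcal G$. Finally one should double-check the identifications $\Findim(\Lambda^{\op})^{\op}=\Findim\Lambda$ and that $D\mathcal F'=\mod\Lambda$ really does yield $\mathcal F'=\mod\Lambda^{\op}$ (apply $D$ once more, using $D^2\cong\mathrm{id}$).
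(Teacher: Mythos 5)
Your proof is correct and uses essentially the same duality argument as the paper's: the paper applies $D$ to the torsion class $\mathcal J$ of injectives to land a torsion-free class in $\mod\Lambda^{\op}$ and compares it with $\mathcal F$ there, while you apply $D$ to $\mathcal F'$ to land a torsion class in $\mod\Lambda$ and compare it with $\mathcal G$ there; these are just the two sides of the same reduction to Theorem~\ref{thm: finite ddell is torsion-free class} for $\Lambda^{\op}$.
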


\begin{proof} Let $\mathcal J$ be the torsion class generated by all injective $\Lambda$-modules. If $\mathcal J=\mod\text-\Lambda$, then the torsion-free class $D\mathcal{J}$ in $\mod\Lambda^{\op}$ cogenerated by all projective $\Lambda^{\op}$-modules contains all $\Lambda^{\op}$-modules and in particular all simple $\Lambda^{\op}$-modules. Since $\mathcal F$ contains all projective $\Lambda^\op$-modules, we have $D\mathcal J\subseteq \mathcal F$ and $D\mathcal{J} = \mod\text-\Lambda^\op$. So, by Theorem \ref{thm: finite ddell is torsion-free class}, $\Findim(\Lambda^{\op})^{\op}=\Findim\Lambda<\infty$.
\end{proof}

This leads us to a comparison of derived delooping level and injective generation of the derived category. Let $\D(\Lambda)$ be the unbounded \textbf{derived category} of complexes over $\Lambda$-modules. Let $M^{\perp}$ be the \textbf{right perpendicular category} of $M$, defined as the full subcategory of $\D(\Lambda)$ such that $\Hom_{\D(\Lambda)}(M, X[t])=0$ for all $X$ and $t\in\Z$. The left perpendicular category $\prescript{\perp}{}{M}$ of $M$ is defined similarly. In \cite[Theorem 4.3]{rickard2019unbounded}, Rickard proves that if the localizing subcategory generated by all injective $\Lambda$-modules (the smallest triangulated subcategory of $\D(\Lambda)$ that contains all injectives and is closed under coproducts), denoted by $\langle \text{Inj-}\Lambda \rangle$, is the entire $\D(\Lambda)$, then $\Findim\Lambda<\infty$. In that case, we say that \textbf{injectives generate} for $\Lambda$. In particular, if all simple stalk complexes are in $\langle \text{Inj-}\Lambda \rangle$, then injectives generate. The paper presents several methods for showing injective generation, but it is difficult in general to determine whether a (simple) stalk complex is in $\langle \text{Inj-}\Lambda \rangle$. In light of our new invariant, we present two possible candidates for $\langle \text{Inj-}\Lambda \rangle$ in terms of $\ddell\!$.

Suppose $\ddell\Lambda^{\op}=\infty$ and $\ddell {DS}=\infty$ for some simple $\Lambda$-module $DS_{\Lambda}$. Then we would like to ask whether the dual simple module $S$ is in the localizing subcategory generated by all injective $\Lambda$-modules $I$. Since we know $\prescript{\perp}{}{S}$ is a localizing subcategory, the question becomes whether $\Hom_{\D(\Lambda)}(I, S[t])=\Ext_{\Lambda}^t(I, S)=0$ for all $t\in \N$. It is clear that this is true for $t=0$, but there does not seem to be an easy way to determine the case when $k>0$.

Define a class of $\Lambda$-modules $\F_{\infty}=\{DM \mid \kddell M<\infty \text{ for all $k\in\N$}\}$. Another candidate for $\langle \text{Inj-}\Lambda \rangle$ is $\langle \F_{\infty} \rangle$. Note that all projective $\Lambda^{\op}$-modules have finite $\kddell\!$ for all $k$, so $\F_{\infty}$ contains all injective $\Lambda$-modules. Moreover, $\F_{\infty}$ satisfies the following.
\begin{lemma}
\label{lem:F infinity is dense subcat}
If $0\to A\to B\to C\to 0$ is a short exact sequence in $\D(\Lambda)$, then if two of the terms are in $\F_{\infty}$ then so is the third, where $A, B, C$ are all considered as stalk complexes.
\end{lemma}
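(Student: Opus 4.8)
The plan is to transport the statement into module categories and verify there that ``finite $\kddell\!$ for all $k$'' is a two-out-of-three property. A short exact sequence of stalk complexes in $\D(\Lambda)$ is the same datum as a short exact sequence $0\to A\to B\to C\to 0$ in $\mod\Lambda$, and applying $D$ turns it into a short exact sequence $0\to DC\to DB\to DA\to 0$ in $\mod\Lambda^{\op}$; moreover $N\in\F_{\infty}$ exactly when the $\Lambda^{\op}$-module $DN$ has $\kddell DN<\infty$ for every $k\in\N$. Since $\Lambda^{\op}$ is again a finite dimensional algebra, it suffices to prove: for every short exact sequence $0\to X\to Y\to Z\to 0$ in $\mod\Lambda$, if two of $X,Y,Z$ have finite $\kddell\!$ for all $k$, then so does the third. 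I would split this into three cases according to which term is the unknown one. The case where the middle term $Y$ is unknown (so $X,Z$ are in the class) is exactly Lemma \ref{lemma: kddell finite is closed under extension}, applied separately for each $k$; the other two cases are as follows.

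\emph{Submodule unknown} ($Y,Z$ in the class, $X$ wanted): rotate by Lemma \ref{lem:rotating_ses} to the short exact sequence $0\to\Omega Z\to X\oplus P_Z\to Y\to 0$. The class is closed under syzygies: $\dell(\Omega Z)=0$ gives $\ddell(\Omega Z)=0$, and Lemma \ref{lemma: ddell of syzygies} propagates finiteness of all $\kddell Z$ to finiteness of all $\kddell(\Omega Z)$, so $\Omega Z$ is in the class. Applying Lemma \ref{lemma: kddell finite is closed under extension} to the rotated sequence (now with $Y$ in the quotient slot) gives $\kddell(X\oplus P_Z)<\infty$ for all $k$, hence $\kddell X<\infty$ for all $k$, as $\kddell\!$ ignores projective summands. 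This is the argument of Lemma \ref{lem:kddell_closed_under_submods}, reinforced by Lemma \ref{lemma: ddell of syzygies} to cover all $k$ simultaneously.

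\emph{Quotient unknown} ($X,Y$ in the class, $Z$ wanted): this is the step I expect to be the real obstacle, because the condition ``$(i+k)$-$\dell C_i\le m-i$'' in the definition of $\kddell\!$ is degree-sensitive, so a careless splicing of resolutions shifts the delooping indices the wrong way. I would work one $k$ at a time and absorb the shift by using $(k+1)$-derived-delooping data throughout. Choose an exact sequence $0\to C_p\to\cdots\to C_0\xrightarrow{\pi}Y\to 0$ witnessing $(k+1)$-$\ddell Y<\infty$; then $C_0$ has finite $(k+1)$-$\dell\!$, hence finite $\kdell\!$. Pull $\pi$ back along the surjection $Y\onto Z$ to obtain a submodule $K=\pi^{-1}(X)\subseteq C_0$ with $C_0/K\cong Z$ and an exact sequence $0\to\ker\pi\to K\to X\to 0$. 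The module $\ker\pi=\im(C_1\to C_0)$ is resolved by the tail $0\to C_p\to\cdots\to C_1\to\ker\pi\to 0$ of the chosen resolution, so after re-indexing it has finite $(k+1)$-$\ddell\!$; since $(k+1)$-$\ddell X<\infty$, Lemma \ref{lemma: kddell finite is closed under extension} with $k+1$ in place of $k$ gives $(k+1)$-$\ddell K<\infty$. Finally I would splice an exact sequence $0\to E_q\to\cdots\to E_0\to K\to 0$ witnessing $(k+1)$-$\ddell K<\infty$ onto $0\to K\to C_0\to Z\to 0$, producing $0\to E_q\to\cdots\to E_0\to C_0\to Z\to 0$; the deliberate shift from $k$ to $k+1$ in the $E_\bullet$ resolution exactly offsets the one-step index shift caused by placing $C_0$ in degree $0$, so a short index check (with $m=\max\{(k+1)\text{-}\dell C_0,\, 1+(k+1)\text{-}\ddell K\}$) shows the result is an honest $\kddell\!$-resolution of $Z$. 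Running this for every $k$ puts $Z$ in the class.

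I expect the quotient case to be the only genuine difficulty. The first two cases reduce almost immediately to Lemmas \ref{lemma: kddell finite is closed under extension} and \ref{lemma: ddell of syzygies}, but the quotient case cannot be deduced from them (attempting it runs into the unavailable ``desyzygy'' direction) and the bookkeeping of delooping indices is delicate; the point that makes it go through is to pass systematically to $(k+1)$-derived-delooping resolutions so that the unavoidable degree shift is absorbed rather than fought.
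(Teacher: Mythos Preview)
Your proof is correct, and in two of the three cases it supplies substantially more than the paper does. For the extension case you and the paper agree (both invoke Lemma~\ref{lemma: kddell finite is closed under extension} after dualizing). For the remaining cases the paper simply writes ``by Theorem~\ref{thm:subddell}'' and ``by Theorem~\ref{thm:ddell}''. After passing to $0\to DC\to DB\to DA\to 0$ in $\mod\Lambda^{\op}$, those become precisely your quotient and submodule cases for the class $\{M:\kddell M<\infty\text{ for all }k\}$; but Theorems~\ref{thm:subddell} and~\ref{thm:ddell} bound $\kedell$ (and hence $\Findim$), not $\kddell$, so as literal citations they do not deliver closure of that class under quotients or submodules. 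At best they point to the relevant circle of ideas.

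Your submodule argument---rotate, observe $\kddell\Omega Z<\infty$ for every $k$ via Lemma~\ref{lemma: ddell of syzygies}, then apply Lemma~\ref{lemma: kddell finite is closed under extension}---is the right fix, and you are correct that the hypothesis on $Z$ is genuinely needed once $k\ge 2$; the paper's Lemma~\ref{lem:kddell_closed_under_submods} only treats $k=1$, where $\dell\Omega Z=0$ is automatic. Your quotient argument is where the real work lies and where the paper's one-word citation is weakest: your deliberate passage to $(k{+}1)$-level data for $Y$, $\ker\pi$, and $K$ before splicing onto $0\to K\to C_0\to Z\to 0$ is exactly what absorbs the degree shift, and the index check goes through with $m=\max\{(k{+}1)\text{-}\dell C_0,\ 1+(k{+}1)\text{-}\ddell K\}$ and resolution length $q+1\le m$. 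So the three approaches coincide on the middle term, but for the outer two your proposal provides the argument that the paper only gestures at.
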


\begin{proof}
If $B,C$ are in $\F_{\infty}$, then so is $A$ by Theorem \ref{thm:subddell}. If $A,B$ are in $\F_{\infty}$, then so is $C$ by Theorem \ref{thm:ddell}. Finally, if $A,C\in \F_{\infty}$, then so is $B$ by Lemma \ref{lemma: kddell finite is closed under extension}. 
\end{proof}

We summarize the two candidates as two questions below.

\begin{question}
\label{question:ddell and injective generation}
Suppose there is a simple $\Lambda$-module $S$ such that $\ddell DS=\infty$ in $\Lambda^{\op}$.
\begin{enumerate}
\item Is it true that $\Ext_{\Lambda}^t(I,S)=0$ for all injective modules $I$ and all $t\in\N$?
\item Is the stalk complex $S$ not contained in the localizing subcategory $\langle \F_{\infty} \rangle$?
\end{enumerate}
\end{question}

A positive answer to either question would suggest that having finite derived delooping level is a stronger condition than injective generation. Also, $\ddell \Lambda^{\op}$ gives a good upper bound for $\Findim \Lambda$. Investigating these questions will also tighten our understanding of various prominent methods for solving the findim conjecture.

\bibliographystyle{plain}
\bibliography{refs}

\begin{thebibliography}{10}

\bibitem{auslander1975generalized}
Maurice Auslander and Idun Reiten.
\newblock On a generalized version of the nakayama conjecture.
\newblock {\em Proceedings of the American Mathematical Society}, 52(1):69--74,
  1975.

\bibitem{bass1960}
Hyman Bass.
\newblock Finitistic dimension and a homological generalization of semi-primary
  rings.
\newblock {\em Transactions of the American Mathematical Society},
  95(3):466--488, 1960.

\bibitem{erdmann2004radical}
Karin Erdmann, Thorsten Holm, Osamu Iyama, and Jan Schr{\"o}er.
\newblock Radical embeddings and representation dimension.
\newblock {\em Advances in mathematics}, 185(1):159--177, 2004.

\bibitem{gelinas2022}
Vincent G{\'e}linas.
\newblock The depth, the delooping level and the finitistic dimension.
\newblock {\em Advances in Mathematics}, 394:108052, 2022.

\bibitem{goodearl2014repetitive}
KR~Goodearl and B~Huisgen-Zimmermann.
\newblock Repetitive resolutions over classical orders and finite dimensional
  algebras.
\newblock {\em arXiv preprint arXiv:1407.2321}, 2014.

\bibitem{GKK1991}
Edward~L Green, Ellen Kirkman, and James Kuzmanovich.
\newblock Finitistic dimensions of finite dimensional monomial algebras.
\newblock {\em Journal of algebra}, 136(1):37--50, 1991.

\bibitem{green1991}
Edward~L Green and Birge Zimmermann-Huisgen.
\newblock Finitistic dimension of artinian rings with vanishing radical cube.
\newblock {\em Mathematische Zeitschrift}, 206:505--526, 1991.

\bibitem{masters_thesis}
Jacob~Fjeld Grevstad.
\newblock Finitistic dimension conjecture, 2021.

\bibitem{hanson2022counterexample}
Eric~J Hanson and Kiyoshi Igusa.
\newblock A counterexample to the $\phi$-dimension conjecture.
\newblock {\em Mathematische Zeitschrift}, 300(1):807--826, 2022.

\bibitem{happel1991homological}
Dieter Happel.
\newblock Homological conjectures in representation theory of
  finite-dimensional algebras.
\newblock {\em Sherbrook Lecture Notes Series}, 1991.

\bibitem{happel1993reduction}
Dieter Happel.
\newblock Reduction techniques for homological conjectures.
\newblock {\em Tsukuba journal of mathematics}, 17(1):115--130, 1993.

\bibitem{huisgen1991predicting}
Birge~Zimmermann Huisgen.
\newblock Predicting syzygies over monomial relations algebras.
\newblock {\em manuscripta mathematica}, 70:157--182, 1991.

\bibitem{huisgen1992homological}
Birge~Zimmermann Huisgen.
\newblock Homological domino effects and the first finitistic dimension
  conjecture.
\newblock {\em Inventiones mathematicae}, 108(1):369--383, 1992.

\bibitem{huisgen1995}
Birge~Zimmermann Huisgen.
\newblock The finitistic dimension conjectures--a tale of 3.5 decades.
\newblock In {\em Abelian Groups and Modules: Proceedings of the Padova
  Conference, Padova, Italy, June 23--July 1, 1994}, pages 501--517. Springer,
  1995.

\bibitem{igusa2005}
Kiyoshi Igusa and Gordana Todorov.
\newblock On the finitistic global dimension conjecture for artin algebras.
\newblock {\em Representations of algebras and related topics}, 45:201--204,
  2005.

\bibitem{kershaw2023}
Luke Kershaw and Jeremy Rickard.
\newblock A finite dimensional algebra with infinite delooping level.
\newblock {\em arXiv preprint arXiv:2305.09109}, 2023.

\bibitem{krause2022symmetry}
Henning Krause.
\newblock On the symmetry of the finitistic dimension.
\newblock {\em arXiv preprint arXiv:2211.05519}, 2022.

\bibitem{mantese2004wakamatsu}
Francesca Mantese and Idun Reiten.
\newblock Wakamatsu tilting modules.
\newblock {\em Journal of Algebra}, 278(2):532--552, 2004.

\bibitem{rickard2019unbounded}
Jeremy Rickard.
\newblock Unbounded derived categories and the finitistic dimension conjecture.
\newblock {\em Advances in Mathematics}, 354:106735, 2019.

\bibitem{ringel2020gorenstein}
Claus~Michael Ringel and Pu~Zhang.
\newblock Gorenstein-projective and semi-gorenstein-projective modules.
\newblock {\em Algebra \& Number Theory}, 14(1):1--36, 2020.

\bibitem{smalo1998}
Sverre Smal{\o}.
\newblock The supremum of the difference between the big and little finitistic
  dimensions is infinite.
\newblock {\em Proceedings of the American Mathematical Society},
  126(9):2619--2622, 1998.

\bibitem{yamagata1996frobenius}
Kunio Yamagata.
\newblock Frobenius algebras.
\newblock {\em Handbook of algebra}, 1:841--887, 1996.

\end{thebibliography}

\end{document}